\DeclareMathOperator\coker{coker}
\theoremstyle{plain}
\newtheorem{theorem}{Theorem} [section]
\newtheorem{lemma}[theorem]{Lemma}
\newtheorem{proposition}[theorem]{Proposition}
\newtheorem{corollary}[theorem]{Corollary}
\newtheorem{question}[theorem]{Question}
\newtheorem*{Donaldson obstruction}{Donaldson obstruction}
\newtheorem*{Correction term obstruction}{Correction term obstruction}
\theoremstyle{definition}
\theoremstyle{remark}
\newtheorem*{remark}{Remark}
\newtheorem{example}[theorem]{Example}
\newcommand{\Q}{\mathbb{Q}}
\newcommand{\Z}{\mathbb{Z}}
\newcommand{\rhcg}{\Theta^3_\mathbb{Q}}
\newcommand{\spinc}{\text{Spin}^c}
\title[Spherical 3-manifolds bounding rational homology balls]{Spherical 3-manifolds bounding rational homology balls}
\author{Dong Heon Choe}
\address{National Institute for Mathematical Sciences, 105 Gwanggyo-ro, Yeongtong-gu, Suwon-si, Gyeonggi-do 16229, Republic of Korea}
\email{dhchoe@nims.re.kr}
\author{Kyungbae Park}
\address{Department of Mathematical Sciences, Seoul National University, 1 Gwankak-ro, Gwanak-gu, Seoul 08826, Republic of Korea}
\email{kyungbaepark@snu.ac.kr}
\keywords{Spherical 3-manifolds; rational homology 4-balls; Seifert manifolds; Heegaard Floer correction terms}
\subjclass[2010]{  
	57N13, 
	57M25, 
	57M27, 
	57R58 
}
\begin{document}
\begin{abstract}
	We give a complete classification of the spherical 3-manifolds that bound smooth rational homology 4-balls. Furthermore, we determine the order of spherical 3-manifolds in the rational homology cobordism group of rational homology 3-spheres. To this end, we use constraints for 3-manifolds to bound rational homology balls induced from Donaldson's diagonalization theorem and Heegaard Floer correction terms. 
\end{abstract}
 
\maketitle

\section{Introduction}
In this paper, every manifold is assumed to be compact and smooth unless stated otherwise. We say a 3-manifold (resp. a 4-manifold) is a \emph{rational homology 3-sphere} (resp. a \emph{rational homology 4-ball}) if the homology groups of the manifold are isomorphic to those of the 3-sphere (resp. the 4-ball) with coefficients in the rationals.  An interesting question in low-dimensional topology, which was proposed by Casson and appeared in Kirby's 1993 problem list \cite[Problem 4.5]{Kirby:1997-1}, asks which rational homology 3-spheres bound rational homology 4-balls; in other words, which of them admit rational homology ball fillings.

One significance of this question is given from a relation to the study of knot concordance. It is well known that if a knot $K$ is slice, then the 3-manifold $\Sigma(K)$ obtained by the double covering of $S^3$ branched along the knot bounds a rational homology ball; see \cite[Lemma 2]{Casson-Gordon:1986-1} for example. This gives an effective obstruction to a knot being slice. For example, using this condition as a main obstruction to a knot being slice, the celebrated slice-ribbon conjecture, posed by Fox \cite{Fox:1962-1}, \cite[Problem 1.33]{Kirby:1997-1}, has been established for some classes of knots such as all 2-bridge knots \cite{Lisca:2007-1}, and certain infinite families of pretzel knots \cite{Greene-Jabuka:2011-1, Lecuona:2015} and Montesinos knots \cite{Lecuona:2012-1}.

Another implication of this question arises from Fintushel and Stern's rational blow-down operation \cite{Fintushel-Stern:1997-1}, a method of obtaining a new 4-manifold by removing a certain submanifold in a 4-manifold and gluing a rational homology ball along the boundary. It has been at the heart of many interesting constructions of smooth and symplectic 4-manifolds \cite{Park:2005-1, Stipsicz-Szabo:2005-1, Park-Stipsicz-Szabo:2005-1}, and complex surfaces \cite{Lee-Park:2007-1, Park-Park-Shin:2009-1, Park-Park-Shin:2009-2}. The operation, which originally utilized lens spaces bounding rational homology balls, can be generalized further to use other 3-manifolds such as links of surface singularities with the same properties; see \cite{Stipsicz-Szabo-Wahl:2008-1} for example. Hence it is a fundamental task for this construction to determine which 3-manifolds admit rational homology ball fillings.

Earlier examples of rational homology 3-spheres bounding rational homology balls can be found in \cite{Akbulut-Kirby:1979-1, Casson-Harer:1981-1, Fintushel-Stern:1981-1, Fickle:1984-1}. An attempt to classify such manifolds among a specific family of 3-manifolds was made by Lisca. In \cite{Lisca:2007-1, Lisca:2007-2}, Lisca used Donaldson's diagonalization theorem to completely determine the lens spaces and the sums of lens spaces that bound rational homology balls. Following Lisca's results, Lecuona classified those 3-manifolds in some families of Seifert fibered manifolds \cite{Lecuona:2012-1, Lecuona:2017-1}. In \cite{Aceto-Golla:2017-1} Aceto and Golla studied the question for 3-manifolds obtained by Dehn surgery along a knot.

The goal of this article is to give a complete answer to the question for spherical 3-manifolds. A closed orientable 3-manifold is called \emph{spherical} if it admits a complete metric of constant curvature $+1$. A spherical 3-manifold can be also given by a quotient manifold of the form $S^3/\Gamma$, where $\Gamma$ is a finite subgroup of $SO(4)$ acting freely by the rotation on $S^3$. Notice that any spherical 3-manifold admits finite fundamental group, which is isomorphic to $\Gamma$. Conversely, by Perelman's elliptization theorem, any closed orientable prime 3-manifold with finite fundamental group is spherical. In terms of the singularity theory in algebraic geometry, the family of spherical 3-manifolds can be identified with homeomorphism types of links of quotient surface singularities. 

Spherical 3-manifolds are divided into the following five types with regard to their fundamental groups: $\mathbf{C}$ (cyclic), $\mathbf{D}$ (dihedral), $\mathbf{T}$ (tetrahedral), $\mathbf{O}$ (octahedral) and $\mathbf{I}$ (icosahedral) type. The $\mathbf{C}$-type manifolds are lens spaces, and a complete classification of lens spaces bounding rational homology balls was given by Lisca as mentioned above. Let $\mathcal{R}$ be the subset of the rationals defined in \cite[Definition 1.1]{Lisca:2007-1}, so that the lens space $L(p,q)$ such that $p>q>0$ and $(p,q)=1$ bounds a rational homology ball if and only if $\frac{p}{q}\in\mathcal{R}$. The following is our main result.

\begin{theorem}\label{thm:main}
	A spherical 3-manifold $Y$ bounds a smooth rational homology 4-ball if and only if $Y$ or $-Y$ is homeomorphic to one of the following manifolds: 
	\begin{itemize}
		\item $L(p,q)$ such that $\frac{p}{q}\in\mathcal{R}$,
		\item $D(p,q)$ such that $\frac{p-q}{q'}\in\mathcal{R}$,
		\item $T_3$, $T_{27}$ and $I_{49}$,
	\end{itemize}
	where $p$ and $q$ are relative prime integers such that $p>q>0$, and $0<q'<p-q$ is the reduction of $q$ modulo $p-q$. For the notations of spherical manifolds, see Section \ref{sec:spherical}.
\end{theorem}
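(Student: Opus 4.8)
The plan is to classify, type by type, which spherical 3-manifolds bound rational homology 4-balls, using two kinds of obstructions together with explicit constructions. The overall strategy has two directions: (i) for each candidate manifold appearing in the theorem's list, exhibit an explicit rational homology ball filling; and (ii) for every other spherical 3-manifold, prove that no such filling exists by applying the Donaldson diagonalization obstruction and the Heegaard Floer correction-term obstruction. Since the cyclic ($\mathbf{C}$) case is already settled by Lisca's classification of lens spaces via $\mathcal{R}$, the real work concentrates on the dihedral ($\mathbf{D}$), tetrahedral ($\mathbf{T}$), octahedral ($\mathbf{O}$) and icosahedral ($\mathbf{I}$) types. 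I would first recall that each spherical manifold $Y$ is a Seifert fibered space over $S^2$ with at most three exceptional fibers, and write down its plumbing description: a negative-definite (or positive-definite, after orientation reversal) plumbing graph whose boundary is $Y$. This reduces every obstruction computation to linear algebra on the associated intersection lattice.

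First I would handle the infinite $\mathbf{D}$-family. The dihedral manifolds $D(p,q)$ form an infinite family parametrized by coprime integers, so here one needs a clean reduction to an $\mathcal{R}$-type condition. The idea is that $D(p,q)$ bounds a disk bundle or a simple plumbing whose complement, after a suitable rational blow-down or handle cancellation, reduces the problem to a lens space boundary; concretely I expect that $D(p,q)$ bounds a rational homology ball precisely when an associated lens space boundary does, which should translate into the stated condition $\frac{p-q}{q'}\in\mathcal{R}$ with $q'$ the reduction of $q$ modulo $p-q$. The forward construction would proceed by building an explicit Kirby diagram for the filling; the obstruction direction would feed the plumbing lattice of $D(p,q)$ into the Donaldson embedding criterion (a negative-definite filling must embed its intersection form into a diagonal lattice $\mathbb{Z}^n$) and show that embeddability forces exactly the $\mathcal{R}$-condition, mirroring Lisca's lens-space argument but adapted to the branched/dihedral lattice structure.

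Next I would treat the sporadic $\mathbf{T}$, $\mathbf{O}$ and $\mathbf{I}$ types. Because the tetrahedral, octahedral and icosahedral groups are finite in number for each order, there are only finitely many manifolds $T_k$, $O_k$, $I_k$ (indexed by $|H_1|$), so this becomes a finite check. For the three that do bound, namely $T_3$, $T_{27}$ and $I_{49}$, I would produce explicit rational homology ball fillings—most naturally by realizing each as the double branched cover of a slice knot or as the boundary of an explicitly constructed rational ball, or via a rational blow-down move—thereby certifying membership in the list. For all remaining sporadic manifolds I would run the two obstructions. The Donaldson obstruction rules out a candidate whenever its plumbing lattice admits no isometric embedding into $\mathbb{Z}^n$; when the Donaldson test is inconclusive (the lattice does embed but the manifold still should not bound), I would invoke the correction-term obstruction, which requires that for a $\mathbb{Q}$HB$^4$ filling the $d$-invariants of $Y$ vanish in a prescribed pattern across the spin$^c$ structures. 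Computing the correction terms $d(Y,\mathfrak{s})$ from the plumbing via the Ozsváth–Szabó algorithm for negative-definite plumbed manifolds yields numerical constraints that the non-bounding sporadic cases will violate.

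The main obstacle, I expect, is twofold. First, in the $\mathbf{D}$-family the reduction to the precise arithmetic condition $\frac{p-q}{q'}\in\mathcal{R}$ is delicate: one must show that the lattice-embedding constraints coming from the dihedral plumbing are \emph{equivalent} to—not merely implied by—the Lisca $\mathcal{R}$-condition for the associated lens space, which requires a careful structural analysis of how the dihedral lattice decomposes and embeds, together with a matching explicit filling so that the two directions meet exactly. Second, for the sporadic types the challenge is that the Donaldson diagonalization obstruction alone is generally \emph{not sharp}: certain non-bounding sporadic manifolds have diagonalizable intersection forms, so one genuinely needs the finer correction-term obstruction to eliminate them, and the correction-term computations must be carried out exactly (no rounding) from the plumbing graphs. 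Conversely, certifying that $T_3$, $T_{27}$ and $I_{49}$ \emph{do} bound requires honest constructions whose homology and linking-form data are verified to match those of a rational homology ball; establishing that these three—and only these three—survive both obstructions is the crux that pins down the final list.
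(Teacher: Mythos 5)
Your high-level strategy (explicit fillings for the listed manifolds, Donaldson plus correction-term obstructions for the rest) matches the paper's in spirit, but there are two concrete gaps that would derail the execution. First, your claim that the $\mathbf{T}$, $\mathbf{O}$ and $\mathbf{I}$ cases reduce to ``a finite check'' is false: each of these types is an \emph{infinite} family parametrized by the central weight $b\geq 2$ (e.g.\ $T_{6(b-2)+1}$, $O_{12(b-2)+11}$, $I_{30(b-2)+19}$ for all $b$), so you cannot simply enumerate and test. The paper's workhorse here, which your proposal omits entirely, is the elementary observation that a rational homology sphere bounding a rational homology ball must have $|H_1|$ a perfect square. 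This single condition kills \emph{every} $\mathbf{O}$-type manifold (since $|H_1|=2|12b-6-4\beta_2-3\beta_3|$ with $\beta_3$ coprime to $4$ is never a square) and reduces the $\mathbf{T}$ and $\mathbf{I}$ types to the families $T_{6(b-2)+3}$, $I_{30(b-2)+1}$ and $I_{30(b-2)+19}$; only then do uniform-in-$b$ Donaldson arguments (for $I_{30(b-2)+1}$) and uniform-in-$b$ correction-term computations via the trefoil surgery description and the Ni--Wu formula (for $I_{30(b-2)+19}$) finish the job. Without the square-order reduction you would need infinitely many lattice embeddings and $d$-invariant computations with no uniform mechanism to carry them out.

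Second, your treatment of the $\mathbf{D}$-family proposes to rerun Lisca's lattice-embedding analysis on the dihedral plumbing and to build Kirby-diagram fillings by hand. The paper instead observes that $D(p,q)=Y(b;(2,1),(2,1),(\alpha_3,\beta_3))$ has the two legs $(2,1),(2,1)$ \emph{complementary}, so by Lecuona's result it is rational homology \emph{cobordant} to the lens space $-L(p-q,q)$; both directions of the classification (existence of a filling when $\tfrac{p-q}{q'}\in\mathcal{R}$, and non-existence otherwise) then follow at once from Lisca's theorem, with no new lattice analysis and no new constructions needed. The same complementary-legs cobordism, not an ad hoc filling, is what certifies $T_3$ and $T_{27}$ (they are cobordant to $S^3$ and to $-L(9,2)$ respectively); only $I_{49}$ requires a genuinely new construction, which the paper supplies as a ribbon band for the Montesinos knot $M(3;(2,1),(3,2),(5,1))$. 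Your proposal correctly anticipates that Donaldson alone is not sharp for $I_{30(b-2)+19}$ and that correction terms must take over there, but as written the plan is missing the two reductions that make the proof finite and tractable.
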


In fact, we study a more general question than the above in the context of homology cobordisms. We say two rational homology 3-spheres $Y_1$ and $Y_2$ are \emph{rational homology cobordant} if there is a smooth cobordism $W$ from $Y_1$ to $Y_2$ such that the inclusion maps $Y_1\hookrightarrow W\hookleftarrow Y_2$ induce isomorphisms between the homology groups with coefficients in the rationals. Up to this cobordism relation, the set of rational homology 3-spheres forms an abelian group, called the \emph{rational homology cobordism group}, denoted by $\Theta^3_\Q$. Our result, Theorem \ref{thm:main}, completely classifies spherical 3-manifolds representing the identity or of order one element in $\Theta^3_\Q$. Hence it is natural to ask more generally what the order of spherical manifolds are. The order of all $\mathbf{C}$-type manifolds (lens spaces) are also known by Lisca in \cite[Corollary 1.2]{Lisca:2007-2}. In this paper, we completely determine the order of all non-cyclic spherical manifolds.

\begin{theorem}\label{thm:order}
	The order of a spherical manifold of type $\mathbf{D}$, $D(p,q)$, in $\rhcg$ is the same as that of the lens space $L(p-q,q)$. The order of a spherical manifold $Y$ of type $\mathbf{T}$, $\mathbf{O}$ or $\mathbf{I}$ is given as:
	\begin{itemize}
		\item $1$ if and only if $\pm Y\cong T_{3},T_{27}$ or $I_{49}$,
		\item $2$ if and only if $\pm Y\cong T_{15}$, 
		\item $\infty$ otherwise.
	\end{itemize}
\end{theorem}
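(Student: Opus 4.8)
The plan is to prove Theorem~\ref{thm:order} by combining two ingredients: an identification of the cobordism type of each spherical manifold with a simpler model, and the computation of orders for those models. The overall strategy rests on producing, for each spherical manifold $Y$ of given order, both an \emph{upper bound} (an explicit rational homology cobordism realizing the claimed order) and a matching \emph{lower bound} (an obstruction forcing the order to be at least that large).

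\textbf{Reduction to lens spaces for type $\mathbf{D}$.}
First I would treat the dihedral case. The claim is that $D(p,q)$ has the same order in $\rhcg$ as $L(p-q,q)$. The natural approach is to exhibit a rational homology cobordism between $D(p,q)$ and $L(p-q,q)$, or more precisely to show that some multiple $n\cdot D(p,q)$ bounds a rational homology ball exactly when $n\cdot L(p-q,q)$ does. I expect that the Seifert description of $D(p,q)$ gives a plumbing whose boundary can be related, via handle slides or a surgery cobordism, to a connected sum involving $L(p-q,q)$ and manifolds that are themselves null-cobordant. The key computation will be organizing the linear-plumbing data so that the $\mathbf D$-type surgery diagram simplifies to the lens-space surgery after rational-homology-cobordant cancellation. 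Once this equivalence in $\rhcg$ is established, the order statement follows immediately from Lisca's determination of the order of lens spaces in \cite[Corollary 1.2]{Lisca:2007-2}.

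\textbf{The finite list of exceptional cases.}
For types $\mathbf{T},\mathbf{O},\mathbf{I}$, the order-$1$ claim is precisely the content of Theorem~\ref{thm:main} restricted to these types: $\pm Y\cong T_3, T_{27}, I_{49}$ are exactly the ones bounding rational homology balls, so these have order $1$, and no other $\mathbf{T}/\mathbf{O}/\mathbf{I}$ manifold does. The order-$2$ claim for $T_{15}$ requires two steps. First I would show $2\cdot T_{15}=T_{15}\#T_{15}$ bounds a rational homology ball by producing an explicit such filling, most plausibly from a symmetry of the Seifert structure or a ribbon-type construction on the connected sum; the correction-term and Donaldson obstructions from Theorem~\ref{thm:main} must then be checked to vanish for $T_{15}\#T_{15}$ while being nonzero for $T_{15}$ itself, giving order exactly $2$.

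\textbf{Infinite order and the main obstacle.}
For all remaining $\mathbf{T}/\mathbf{O}/\mathbf{I}$ manifolds I must prove the order is infinite, i.e.\ that \emph{no} multiple $n\cdot Y$ bounds a rational homology ball. This is the hardest part, since the obstructions to bounding a \emph{single} rational homology ball used in Theorem~\ref{thm:main} do not automatically obstruct connected sums. The plan is to use the additivity of Heegaard Floer $d$-invariants under connected sum together with their behavior on these Seifert fibered spaces: I would compute the correction terms $d(Y,\mathfrak{s})$ for the relevant $\spinc$ structures and show that the minimal or maximal $d$-invariant over the $\spinc$ structures of $n\cdot Y$ grows (or fails to meet the vanishing pattern required of a rational homology ball boundary) for every $n$. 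Establishing this uniform non-vanishing across all $n$ is the main technical obstacle; I expect it to require a careful understanding of how the finite set of $d$-invariant values of $Y$ assembles under $n$-fold connected sum, exploiting the specific arithmetic of the Seifert invariants of the finitely many infinite-order families in types $\mathbf{T},\mathbf{O},\mathbf{I}$.
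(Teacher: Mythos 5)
Your reduction of the $\mathbf{D}$-type manifolds to lens spaces is the right idea and is essentially what the paper does: $D(p,q)$ has complementary legs $(2,1),(2,1)$, so Lecuona's result (Proposition \ref{prop:complementary_legs}) gives a rational homology cobordism to $-L(p-q,q)$, and Lisca's computation finishes that case. The same observation applies to the family $T_{6(b-2)+3}$, which also has complementary legs and is cobordant to $-L(2b-3,2)$; this is how the paper obtains the order-$2$ statement for $T_{15}$ (as the case $b=4$, cobordant to $-L(5,2)$ of order two). In particular you do not need to construct an explicit rational ball filling of $T_{15}\# T_{15}$ by hand, which would be considerably harder than the reduction.

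The genuine gap is in the infinite-order cases, where your plan rests entirely on additivity of correction terms. This cannot work uniformly across the remaining families. First, for the $\mathbf{O}$-type manifolds $|H_1|$ is even, so there is no unique spin structure forced to extend over a hypothetical rational ball, and in fact the paper handles $O_{12(b-2)+1}$, $O_{12(b-2)+5}$, $O_{12(b-2)+7}$ by a pure lattice-embedding (Donaldson) argument for the $n$-fold sum $\oplus^n Q_X$, with no correction terms at all. Second, for $I_{30(b-2)+19}$ with odd $b$ the correction term of the spin structure vanishes \emph{and} the canonical definite lattice does embed into the standard diagonal lattice of the same rank, so neither obstruction alone suffices; the paper must combine the Donaldson embedding with correction terms in the style of Greene--Jabuka to bound the number of vanishing $d$-invariants among extendable $\spinc$ structures. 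Third, even where correction terms do the job, applying the obstruction to $\#^n Y$ requires knowing which $\spinc$ structures must extend over a rational ball, i.e.\ structural control of metabolizers of the linking form on $H^2(\#^n Y)$; the paper imports results of Kim--Livingston for this (surjectivity of projections when $|H_1|$ is square free, and the existence of elements of a prescribed form in a metabolizer for the $I_{77}$ case). Without these additional inputs, the step ``establish uniform non-vanishing across all $n$'' cannot be carried out, and several families (all of type $\mathbf{O}$, and $I_{30(b-2)+19}$ with odd $b>5$) would be left unobstructed.
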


Note that not much is known about the structure of the group $\rhcg$; see \cite{Kim-Livingston:2014-1} for example. In particular, it has been conjectured that any nontrivial torsion element in $\rhcg$ would only appear as two torsion. Our theorem supports the conjecture in the case of spherical manifolds.

We sketch the idea of the proof of our results. All $\mathbf{D}$-type manifolds and a certain family of $\mathbf{T}$-type manifolds (more precisely manifolds of the form $T_{6(b-2)+3}$) are contained in the class of manifolds considered by Lecuona in \cite{Lecuona:2012-1, Lecuona:2017-1}, namely Seifert fibered rational homology 3-spheres with complementary legs. Lecuona showed that any manifold in this class is rational homology cobordant to a lens space (including $S^3$), and hence the order in $\rhcg$ of the manifold is identical to that of the corresponding lens space.

For manifolds of type $\mathbf{T}$ (other than $T_{6(b-2)+3}$), $\mathbf{O}$ and $\mathbf{I}$, we use constraints for 3-manifolds to bound rational homology balls, coming from Donaldson's diagonalization theorem and Heegaard Floer correction terms. The argument combining the information from them has been used in many recent articles, see for example \cite{Greene-Jabuka:2011-1,Lecuona:2015,Aceto-Golla:2017-1,Choe-Park:2018-1}. 

Let $Y$ be a rational homology 3-sphere. Suppose there is a negative definite 4-manifold $X$ with boundary $Y$. If $Y$ bounds a rational homology ball $W$, then $X\cup_Y-W$ is a closed negative definite 4-manifold. Then Donaldson's diagonalization theorem \cite{Donaldson:1983, Donaldson:1987} implies that the intersection form of $X\cup_Y-W$ is diagonalizable over $\Z$, i.e. isometric to the standard diagonal lattice $(\Z^{b_2(X)},\langle-1\rangle^{b_2(X)})$. In particular, the intersection form of $X$, $Q_X$, embeds into $(\Z^{b_2(X)},\langle-1\rangle^{b_2(X)})$. Therefore, one might show that $Y$ does not admit a rational ball filling by proving the non-existence of an embedding of $Q_X$ into the standard diagonal lattice of the same rank. This strategy was successful enough to determine all $\mathbf{C}$-type manifolds bounding rational homology balls by Lisca and is also applicable to some other non-cyclic spherical manifolds. However, you will see that this constraint is not sufficient for some families of spherical manifolds. See Section \ref{sec:I_19}.

In order to resolve this, we shall employ another condition coming from Heegaard Floer theory. Heegaard Floer correction terms are rational valued invariants, denoted by  $d(Y,\mathfrak{t})$, associated to a rational homology 3-sphere $Y$ equipped with a spin$^c$ structure $\mathfrak{t}$ over $Y$, introduced by Ozsv\'ath and Szab\'o. In \cite{Ozsvath-Szabo:2003-2} it was shown that if $Y$ bounds a rational ball $W$, then $d(Y,\mathfrak{t})$ vanishes for any spin$^c$-structure $\mathfrak{t}$ that extends over $W$. Hence one can argue that a rational homology sphere $Y$ cannot bound a rational homology ball by observing the existence of non-vanishing correction term for a spin$^c$ structure over $Y$ that extends over any rational homology ball with the boundary $Y$.  

It turns out that most spherical manifolds of types $\mathbf{T}$ (other than manifolds of the form $T_{6(b-2)+3}$), $\mathbf{O}$ and $\mathbf{I}$, except the manifold $I_{49}$, can be obstructed from having finite order in $\rhcg$, by applying various techniques based on these two conditions.  For the manifold $I_{49}$, we explicitly find a rational homology 4-ball bounded by it to complete our classifications. We present Table \ref{tab:order} below to introduce the notations of each type of spherical manifold in this paper and organize our results.

\begin{table}[t!]
	\label{tab:order}
	{\renewcommand{\arraystretch}{1.2}
		\begin{tabular}{cclp{2.7cm}}
			Type& Spherical manifolds&Order in $\Theta^3_\Q$&Proved in\\
			\hline
			$\mathbf{C}$&$L(p,q)\cong Y(-1;(q,p-q))$& $1$ if $\frac{p}{q}\in \mathcal{R}$&Lisca \cite{Lisca:2007-2}\\
			&&2 if $\frac{p}{q}\in(\mathcal{S}\setminus\mathcal{R})\cup\mathcal{F}_2$&\\
			&&$\infty$ if $\frac{p}{q}\notin\mathcal{S}\cup\mathcal{R}\cup\mathcal{F}_2$&\\
			\hline
			$\mathbf{D}$&$D(p,q)\cong Y(1;(2,1),(2,1),(q,q-p))$&same as $L(p-q,q)$&Lecuona \cite{Lecuona:2017-1} \hspace{0.5cm}(Section \ref{sec:type_D})\\
			\hline
			$\mathbf{T}$&$T_{6(b-2)+1}\cong Y(b;(2,1),(3,2),(3,2))$ &$\infty$ for any $b$&Section \ref{sec:mu_bar}\\
			\cline{2-4}
			&$T_{6(b-2)+3}\cong Y(b;(2,1),(3,1),(3,2))$& $1$ if $b=2,6$&Section \ref{sec:type_T}\\
			&&$2$ if $b=4$&\\
			&&$\infty$ otherwise&\\
			\cline{2-4}
			&$T_{6(b-2)+5}\cong Y(b;(2,1),(3,1),(3,1))$&$\infty$ for any $b$&Section \ref{sec:mu_bar}\\
			\hline 
			$\mathbf{O}$&$O_{12(b-2)+1}\cong Y(b;(2,1),(3,2),(4,3))$&$\infty$ for any $b$ &Section \ref{sec:Donaldson_1}\\
			&$O_{12(b-2)+5}\cong Y(b;(2,1),(3,1),(4,3))$&$\infty$ for any $b$ &Section \ref{sec:Donaldson_1}\\
			&$O_{12(b-2)+7}\cong Y(b;(2,1),(3,2),(4,1))$&$\infty$ for any $b$ &Section \ref{sec:Donaldson_1}\\
			&$O_{12(b-2)+11}\cong Y(b;(2,1),(3,1),(4,1))$&$\infty$ for any $b>5$&Section \ref{sec:Donaldson_2}\\
			&&$\infty$ for $b=2,3,4,5$ &Section \ref{sec:d_revisited}\\
			\hline
			$\mathbf{I}$&$I_{30(b-2)+1}\cong Y(b;(2,1),(3,2),(5,4))$&$\infty$ for any $b$&Section \ref{sec:mu_bar}\\
			\cline{2-4}
			&$I_{30(b-2)+7}\cong Y(b;(2,1),(3,2),(5,3))$	&$\infty$ for any $b$&Section \ref{sec:mu_bar}\\
			\cline{2-4}
			&$I_{30(b-2)+11}\cong Y(b;(2,1),(3,1),(5,4))$	&$\infty$ for any $b$&Section \ref{sec:mu_bar}\\
			\cline{2-4}
			&$I_{30(b-2)+13}\cong Y(b;(2,1),(3,2),(5,2))$	&$\infty$ for any $b$&Section \ref{sec:mu_bar}\\
			\cline{2-4}
			&$I_{30(b-2)+17}\cong Y(b;(2,1),(3,1),(5,3))$	&$\infty$ for any odd $b$&Section \ref{sec:mu_bar}\\
			&&$\infty$ for even $b>7$&Section \ref{sec:Donaldson_2}\\
			&& $\infty$ for $b=2,4,6$&Section \ref{sec:d_revisited}\\
			\cline{2-4}
			&$I_{30(b-2)+19}\cong Y(b;(2,1),(3,2),(5,1))$&$\infty$ for any even $b$&Section \ref{sec:mu_bar}\\
			&&$\infty$ for any odd $b>5$&Section \ref{sec:Greene-Jabuka}\\
			&&$1$ if $b=3$&Proposition \ref{prop:ribbon}\\
			&&$\infty$ if $b=5$&Section \ref{sec:d_revisited}\\
			\cline{2-4}
			&$I_{30(b-2)+23}\cong Y(b;(2,1),(3,1),(5,2))$&$\infty$ for even $b$&Section \ref{sec:mu_bar}\\
			&& $\infty$ for odd $b>7$&Section \ref{sec:Donaldson_2}\\
			&& $\infty$ for $b=3,5,7$&Section \ref{sec:d_revisited}\\
			\cline{2-4}
			&$I_{30(b-2)+29}\cong Y(b;(2,1),(3,1),(5,1))$&$\infty$ for any $b$&Section \ref{sec:mu_bar}\\
			\hline
		\end{tabular}
	}
	\caption{The order of spherical manifolds in $\Theta^3_\Q$, where $p>q>0$ are relative prime integers and $b\geq2$.}
\end{table}

Spherical 3-manifolds (more generally Seifert fibered rational homology spheres) can be obtained as the double covering of $S^3$ branched along Montesinos links. Let $\mathcal{S}$ be the set of Montesinos knots that admit spherical branched double covers. Note that the slice-ribbon conjecture and the non-existence of nontrivial torsion elements of order different from two are well-known open questions in the study of knot concordance. We have the following by-products, which answer these question for Montesinos knots in $\mathcal{S}$. 

\begin{corollary}\label{cor:slice-ribbon}
	The slice-ribbon conjecture is true for knots in $\mathcal{S}$.
\end{corollary}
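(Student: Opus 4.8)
The plan is to use Theorem~\ref{thm:main} as a slicing obstruction and then upgrade each surviving case to an explicit ribbon disk. Since a ribbon knot is always slice, it suffices to prove the converse for knots in $\mathcal{S}$: every slice knot $K\in\mathcal{S}$ is ribbon. So first I would assume $K\in\mathcal{S}$ is slice. By the classical fact that the double branched cover of a slice knot bounds a smooth rational homology $4$-ball \cite{Casson-Gordon:1986-1}, the manifold $\Sigma(K)$ admits a rational homology ball filling. As $K\in\mathcal{S}$, the cover $\Sigma(K)$ is spherical, so Theorem~\ref{thm:main} forces $\Sigma(K)$ or $-\Sigma(K)$ to be one of $L(p,q)$ with $\frac{p}{q}\in\mathcal{R}$, $D(p,q)$ with $\frac{p-q}{q'}\in\mathcal{R}$, or one of the three sporadic manifolds $T_3$, $T_{27}$, $I_{49}$.

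The next step is to translate this constraint on $\Sigma(K)$ back into a constraint on the knot $K$, using the standard correspondence between Montesinos knots and their Seifert-fibered double branched covers. For the cyclic case the relevant knots are exactly the two-bridge knots, and Lisca \cite{Lisca:2007-1} has already shown that a two-bridge knot whose double cover is $L(p,q)$ with $\frac{p}{q}\in\mathcal{R}$ is ribbon; this case is therefore subsumed. For the dihedral case I would enumerate the Montesinos knots in $\mathcal{S}$ that realize $D(p,q)$, and then, using Theorem~\ref{thm:order} together with the rational homology cobordism from $D(p,q)$ to $L(p-q,q)$, observe that the membership $\frac{p-q}{q'}\in\mathcal{R}$ is precisely the arithmetic condition under which the associated knot admits a band move reducing it to a split union of ribbon two-bridge pieces. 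The set $\mathcal{R}$ is designed exactly so that this condition corresponds to an \emph{obviously ribbon} presentation.

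For the three sporadic spherical manifolds only finitely many Montesinos knots occur, and these I would treat one at a time, exhibiting an explicit ribbon disk (equivalently, a sequence of ribbon band moves) for each. In particular the manifold $I_{49}$ is already handled by Proposition~\ref{prop:ribbon}, where a rational homology ball filling is produced directly from a ribbon disk of the corresponding knot; the manifolds $T_3$ and $T_{27}$ admit analogous explicit ribbon presentations. Assembling these cases shows that every slice $K\in\mathcal{S}$ is ribbon, which is the assertion.

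The hard part will not be the $3$-manifold classification, which Theorem~\ref{thm:main} already supplies, but the knot-level translation. Passing from a spherical double branched cover back to the knot is subtle: distinct Montesinos knots can share a double branched cover, orientation reversal must be tracked carefully, and a given Seifert invariant does not canonically single out the link. The delicate point is therefore to pin down exactly which Montesinos knots in $\mathcal{S}$ have cover landing on the list of Theorem~\ref{thm:main}, and, for the dihedral family, to match the number-theoretic condition $\frac{p-q}{q'}\in\mathcal{R}$ with an honest ribbon band move on the knot rather than merely with the existence of some rational homology ball filling.
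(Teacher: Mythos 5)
Your reduction is the same as the paper's: ribbon $\Rightarrow$ slice $\Rightarrow$ $\Sigma(K)$ bounds a rational ball, then invoke Theorem \ref{thm:main} to restrict $\Sigma(K)$ and show the surviving branch sets are ribbon. The gap is in how you close the loop for the surviving cases. For the dihedral family you assert that $\frac{p-q}{q'}\in\mathcal{R}$ ``is precisely the arithmetic condition under which the associated knot admits a band move reducing it to a split union of ribbon two-bridge pieces'' and that $\mathcal{R}$ is ``designed exactly so.'' Neither claim is established, and neither is how the paper argues: the ribbonness of the branch sets of the complementary-legs families is a genuine theorem of Lecuona (quoted in Section \ref{sec:Lecuona} as \cite[Proposition 3.4]{Lecuona:2017-1}: for a 3-legged Seifert manifold with two complementary legs, the associated Montesinos link bounds a ribbon surface with $\chi=1$ if and only if the Seifert manifold bounds a rational ball), whose proof requires Lisca's lens-space classification plus explicit band constructions. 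Your plan to rederive this by matching arithmetic to band moves amounts to reproving that theorem, and nothing in your sketch does so. Likewise, your claim that $T_3$ and $T_{27}$ ``admit analogous explicit ribbon presentations'' to the one in Proposition \ref{prop:ribbon} is an unsubstantiated assertion; the paper gets these two cases for free from the same Lecuona result, since $T_{6(b-2)+3}$ has complementary legs $(3,1)$ and $(3,2)$. So the one step that carries the content of the corollary is left as a promissory note.

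Two smaller points. First, the case you single out as hardest --- the dihedral family --- is actually vacuous for this corollary: a knot has odd determinant, so $|H_1(\Sigma(K);\Z)|$ is odd, while $|H_1(D(p,q);\Z)|$ is divisible by $4$; hence no $D$-type manifold is the double branched cover of a knot, and the only cases that survive for knots in $\mathcal{S}$ are the two-bridge knots (Lisca), the branch sets of $T_3$ and $T_{27}$, and that of $I_{49}$. Second, your worry about distinct Montesinos knots sharing a spherical double cover is resolved in the paper by citing the classification of Montesinos links (Zieschang) alongside that of Seifert manifolds, which identifies the knots in $\mathcal{S}$ with the natural branch sets; you flag this as delicate but do not resolve it.
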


\begin{corollary}\label{cor:concordance_order}
	For a Montesinos knot $K\in \mathcal{S}$, the order of $K$ in the knot concordance group is equal to that of $\Sigma(K)$ in $\rhcg$.
\end{corollary}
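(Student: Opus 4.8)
The plan is to compare the two orders through the branched double cover homomorphism. Write $\mathcal{C}$ for the knot concordance group and consider the map $\Phi\colon\mathcal{C}\to\rhcg$ sending $[K]$ to $[\Sigma(K)]$. First I would check that this is a well-defined group homomorphism: additivity follows from the homeomorphism $\Sigma(K_1\#K_2)\cong\Sigma(K_1)\#\Sigma(K_2)$, and well-definedness follows because a concordance from $K_1$ to $K_2$ is a smoothly and properly embedded annulus $A\subset S^3\times[0,1]$ whose double cover branched along $A$ is a smooth rational homology cobordism from $\Sigma(K_1)$ to $\Sigma(K_2)$. Granting this, if $[K]$ has finite order $n$ then $n[\Sigma(K)]=\Phi(n[K])=0$, so the order of $\Sigma(K)$ in $\rhcg$ divides the order of $K$ in $\mathcal{C}$; in particular an infinite-order $\Sigma(K)$ forces $K$ to have infinite order as well. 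This already settles the corollary whenever $\Sigma(K)$ has infinite order, and it gives one of the two divisibilities in every case.

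It remains to promote this divisibility to an equality when $\Sigma(K)$ has finite order. Since $K\in\mathcal{S}$, its branched double cover $\Sigma(K)$ is spherical, so by Theorem \ref{thm:order} its order is $1$, $2$, or $\infty$. If the order is $1$, then $\Sigma(K)$ bounds a rational homology $4$-ball, so by Theorem \ref{thm:main} it lies on our explicit list; the argument proving Corollary \ref{cor:slice-ribbon} then identifies $K$ with a knot that is ribbon, hence slice, so $K$ has order $1$ in $\mathcal{C}$ and the two orders agree (the converse implication being supplied by $\Phi$).

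The remaining, and genuinely harder, case is when $\Sigma(K)$ has order $2$; here $\Phi$ only yields $2\mid\mathrm{ord}_{\mathcal{C}}(K)$, so I must independently bound $\mathrm{ord}_{\mathcal{C}}(K)$ from above by $2$, that is, show that $K\#K$ is slice. I would first pin down the finitely many families responsible: by Theorem \ref{thm:order} these are exactly the $K$ with $\Sigma(K)\cong T_{15}$, together with the type-$\mathbf{D}$ cases $\Sigma(K)\cong D(p,q)$ for which $L(p-q,q)$ has order $2$ in $\rhcg$. For each such $K$ the plan is to exhibit $K\#K$ as a slice (indeed ribbon) knot directly, for instance by showing that $K$ is negatively amphichiral---so that $K$ is concordant to its reverse mirror and $K\#K$ bounds an evident ribbon disk---or, failing that, by writing down explicit band moves realizing a ribbon disk for $K\#K$.

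The main obstacle is precisely this order-$2$ step. The difficulty is that $\Phi$ is very far from injective, and the rational homology cobordism it produces cannot in general be upgraded to a knot concordance: knowing that $\Sigma(K)\#\Sigma(K)=\Sigma(K\#K)$ bounds a rational homology ball does not by itself make $K\#K$ slice, and moreover $K\#K$ need not lie in $\mathcal{S}$ (its branched cover is a connected sum and hence typically not spherical), so Theorem \ref{thm:main} does not apply to it. Thus the sliceness of $K\#K$ must be established geometrically, family by family, and verifying the amphichirality or constructing the ribbon disks for the full type-$\mathbf{D}$ list is where I expect the real work to lie.
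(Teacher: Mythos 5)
Your reduction to the order-$2$ case is the same as the paper's, and the lower bound via the branched double cover homomorphism and the order-$1$ case via Corollary \ref{cor:slice-ribbon} are fine. But the order-$2$ step, which you correctly identify as the crux, is both mis-enumerated and left unfinished, and the enumeration error is what makes the problem look harder than it is. First, the type-$\mathbf{D}$ cases you plan to treat ``family by family'' never arise: if $K$ is a knot then $|H_1(\Sigma(K);\Z)|=|\det(K)|$ is odd, whereas every $D(p,q)$ has even first homology, so no knot in $\mathcal{S}$ has a type-$\mathbf{D}$ branched cover. This single observation, which the paper makes explicitly, eliminates the entire family where you ``expect the real work to lie.'' Second, your list omits the genuinely occurring order-$2$ cases coming from lens spaces: Theorem \ref{thm:order} only covers the non-cyclic types, but $\mathcal{S}$ contains the $2$-bridge knots, whose covers are lens spaces, and these do contribute order-$2$ examples. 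For them the equality of orders is exactly Lisca's result \cite[Corollary 1.3]{Lisca:2007-2} and needs only to be cited.

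That leaves $T_{15}$ as the unique non-cyclic order-$2$ case, and here your proposal stops at a plan (amphichirality or explicit band moves) without carrying it out. The paper instead identifies the canonical branch set $M(4;(2,1),(3,2),(3,1))$ with the knot $9_{24}$ of Rolfsen's table and quotes the known fact that $9_{24}$ has concordance order two. As written, your argument establishes only one of the two inequalities in the finite-order cases and does not close the corollary.
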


\begin{remark}
	The concordance order of a knot $K$ is not always identical to the order of its branched double cover $\Sigma(K)$ in $\rhcg$, in general. For instance, it is known that, for odd $q>0$, the Brieskorn sphere $\Sigma(2,2q-1,2q+1)$, which is the double cover of $S^3$ branched along the $(2q-1,2q+1)$-torus knot, bounds a rational homology ball (in fact, a contractible ball) \cite{Casson-Harer:1981-1}. Whereas, any nontrivial torus knots are not slice; see \cite{Litherland:1979} for example. We also observe that this correspondence does not hold even for some Montesinos knots. For example, the Brieskorn sphere $\Sigma(2,3,7)$ is known to bound a rational homology ball \cite{Fintushel-Stern:1984-1}. However, since $\Sigma(2,3,7)$ has non-trivial Rohlin invariant, any double branch set of it cannot be slice. Note that $\Sigma(2,3,7)$ admits a natural branch set, the Montesinos knot $M(0;(2,-1),(3,1),(7,1))$.
\end{remark}

\subsection*{Organization} In the next section, we recall basic notions and properties of Seifert fibered manifolds and discuss how spherical manifolds can be interpreted as Seifert manifolds and Dehn surgery manifolds. In Section \ref{sec:obstructions}, we introduce our main obstructions: one from Donaldson's theorem and the other from Heegaard Floer correction terms, and we also discuss the computation of correction terms for spherical manifolds. In Section \ref{sec:Lecuona}, we recall Lecuona's result on Seifert manifolds with complementary legs and determine the order of manifolds of type $\mathbf{D}$ and of the form $T_{6(b-2)+3}$. In Section \ref{sec:rational_ball}, all spherical manifolds bounding rational homology balls are classified. Finally, in the last section, we determine the order of all spherical manifolds in $\rhcg$ and give the proofs of Corollary \ref{cor:slice-ribbon} and \ref{cor:concordance_order}.

\section{Spherical manifolds as Seifert manifolds and Dehn surgery manifolds}\label{sec:preliminary}
\subsection{Seifert fibered rational homology 3-spheres}\label{sec:Seifert_manifolds}
Spherical 3-manifolds are included in a broad class of 3-manifolds, Seifert fibered rational homology 3-spheres, or Seifert manifolds for short in this paper. We briefly recall some notions and properties of Seifert manifolds. See \cite{Orlik:1972} and \cite{Neumann-Raymond:1978-1} for more detailed expositions.

A Seifert manifold can be represented by the surgery diagram depicted in Figure \ref{fig:Seifert}. The collection of integers in the diagram,
\[(b;(\alpha_1,\beta_1),(\alpha_2,\beta_2),\dots,(\alpha_r,\beta_r)),\]
of which $b\in\Z$, and $\alpha_i>0$ and $\beta_i$ are coprime integers, is called the \emph{Seifert invariant}. Let $Y(b;(\alpha_1,\beta_1),\dots,(\alpha_r,\beta_r))$ denote the Seifert manifold corresponding to the invariant. It is clear from the surgery diagram that permutations among $(\alpha_i,\beta_i)$'s in an invariant result in the same 3-manifold. By the blow-down procedure and Rolfsen's twist respectively, we have the following homeomorphisms,
\[Y(b;(\alpha_1,\beta_1),\dots,(\alpha_r,\beta_r),(1,\pm1))\cong Y(b\mp1;(\alpha_1,\beta_1),\dots,(\alpha_r,\beta_r)),\]
and
\[Y(b;(\alpha_1,\beta_1),\dots,(\alpha_r,\beta_r))\cong Y(b-n;(\alpha_1,\beta_1),\dots,(\alpha_r,\beta_r-n\alpha_r)).\] 
See \cite{Gompf-Stipsicz:1999-1} for example. Hence any Seifert invariant can be normalized so that $b\in\Z^+$ and $\alpha_i>\beta_i>0$ are coprime integers to represent the same manifold, called the \emph{normalized Seifert invariant}. From now on, all Seifert invariants are assumed to be normalized unless stated otherwise. Any Seifert manifolds with $r\leq2$ are homeomorphic to lens spaces, and (orientation preserving) homeomorphism classes of lens spaces are classified as follow:
\[L(p,q)\cong L(p',q')\text{ if and only if }p=p'\text{ and }q\equiv (q')^{\pm1}\text{ modulo } p.\]
In this paper, $L(p,q)$ denotes the lens space obtained by $(p/q)$-surgery along the unknot. By the homeomorphism classification of Seifert manifolds (See \cite[Chapter 5]{Orlik:1972} for example), homeomorphism types of Seifert manifolds with $r\geq3$ are classified by their normalized invariants up to the permutations among the pairs $(\alpha_i,\beta_i)$'s. 
 
\begin{figure}
	\includegraphics[width=0.9\textwidth]{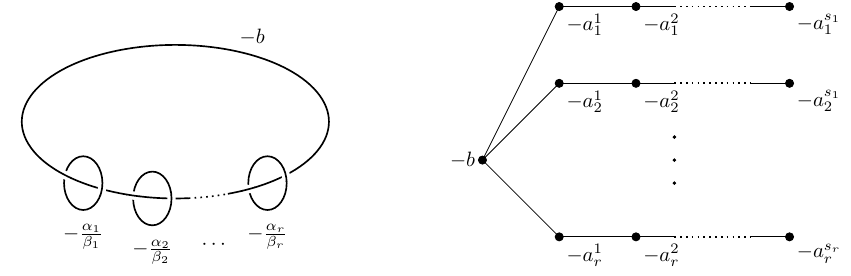}
	\caption{A surgery diagram of the Seifert manifold with the invariant $(b;(\alpha_1,\beta_1),\dots,(\alpha_r,\beta_r))$ and its associated plumbing graph.}
	\label{fig:Seifert}
\end{figure}

Let $Y$ be a Seifert manifold with the invariant $(b;(\alpha_1,\beta_1),\dots,(\alpha_r,\beta_r))$. We define \[e(Y)\vcentcolon=b-\sum_{i=1}^r\frac{\beta_i}{\alpha_i},\] called the \emph{Euler number} of a Seifert manifold. Note that the Euler number is independent of the choices of Seifert invariants for $Y$. It is easy to check that $Y$ is a rational homology 3-sphere if and only if $e(Y)\neq0$. From the linking matrix of the surgery diagram of $Y$, the order of $H_1(Y,\Z)$ is given as 
\begin{equation*}
	\left|H_1(Y,\Z)\right|=\alpha_1\alpha_2\dots\alpha_r\left|b-\sum_{i=1}^r\frac{\beta_i}{\alpha_i}\right|.
\end{equation*}

By changing the orientation of $Y$ if necessary, we may assume that $e(Y)>0$; note that $e(-Y)=-e(Y)$. Then there is a natural negative definite 4-manifold with the boundary $Y$ constructed as follows. Extend each pair $(\alpha_i,\beta_i)$ in the invariant by the following Hirzebruch-Jung continued fraction: 
\[\frac{\alpha_i}{\beta_i}=a^1_i-\cfrac{1}{a^2_i-\cfrac{1}{\dots-\cfrac{1}{a^{s_i}_i}}}
={\colon}[a^1_i,a^2_i,\dots,a^{s_i}_i],\]
where $a_i^j\geq 2$. The \emph{canonical (negative) definite 4-manifold} of $Y$ is the plumbed 4-manifold corresponding to the weighted graph in Figure \ref{fig:Seifert}, i.e. each vertex with weight $n$ represents the disk bundle over the 2-sphere with the Euler number $n$ and each edge connecting two vertices represents the plumbing of the associated two disk bundles: see \cite[Example 4.6.2]{Gompf-Stipsicz:1999-1} for the plumbing construction. The assumption that $e(Y)>0$ implies that the intersection form of the 4-manifold is negative definite.

\subsection{Spherical 3-manifolds as Seifert manifolds}\label{sec:spherical}
All spherical manifolds are known to be Seifert manifolds. A finite subgroup $\Gamma$ of $SO(4)$ acting freely on $S^3$ is either a cyclic group or a central extension of a dihedral, tetrahedral, octahedral or icosahedral group by a cyclic group of even order. Then there is an induced $S^1$-action on $S^3$ so that the action of $\Gamma$ is equivariant, and the orbit space $S^3/\Gamma$ possesses a Seifert fibered structure. Spherical manifolds are divided into five types, $\mathbf{C}$, $\mathbf{D}$, $\mathbf{T}$, $\mathbf{O}$ and $\mathbf{I}$, by the type of $\Gamma$ and are represented by the following normalized Seifert invariants \cite{Seifert:1933}; see also \cite[Chapter 6]{Orlik:1972}.
\begin{itemize}
	\item Type $\mathbf{C}$: $(b;(\alpha_1,\beta_1))$,
	\item Type $\mathbf{D}$: $(b;(2,1),(2,1),(\alpha_3,\beta_3))$,
	\item Type $\mathbf{T}$: $(b;(2,1),(3,\beta_2),(3,\beta_3))$,
	\item Type $\mathbf{O}$: $(b;(2,1),(3,\beta_2),(4,\beta_3))$,
	\item Type $\mathbf{I}$: $(b;(2,1),(3,\beta_2),(5,\beta_3)),$
\end{itemize}
where $\alpha_i>\beta_i>0$ are coprime integers. Up to orientations of the manifolds, we can further assume that $b\geq2$ for $\mathbf{D}$, $\mathbf{T}$, $\mathbf{O}$ and $\mathbf{I}$ type manifolds. In terms of the choices of $\beta_i$'s, we organize the notations of all spherical manifolds, up to the orientations, as follows: 
\begin{itemize}
	\item Type $\mathbf{C}$: $L(p,q)$, $p>q>0, (p,q)=1,$
	\item Type $\mathbf{D}$: $D(p,q)$, $p>q>0, (p,q)=1,$
	\item Type $\mathbf{T}$: $T_{6(b-2)+k}, b\geq2, k=1,3,5,$
	\item Type $\mathbf{O}$: $O_{12(b-2)+k}, b\geq2, k=1,5,7,11,$
	\item Type $\mathbf{I}$: $I_{30(b-2)+k}, b\geq2, k=1,7,11,13,17,19,23,29$.
\end{itemize}
See Table \ref{tab:order} for the identification of these notations with the corresponding Seifert invariants. Notice that the subscript in the notation for $\mathbf{T}$, $\mathbf{O}$ or $\mathbf{I}$ type manifold is equal to the order of $H_1(-;\Z)$ of the manifold divided by $3$, $2$ and $1$ respectively.

\subsection{Spherical 3-manifolds as Dehn surgery manifolds}\label{sec:Dehn-surgery}
Let $S^3_r(K)$ denote the 3-manifold obtained by $r$-framed Dehn surgery of $S^3$ along a knot $K$, and $T_{p,q}$ denote the $(p,q)$-torus knot. Recall that all Dehn surgery manifolds along torus knots are Seifert manifolds.
\begin{lemma}[{\cite[Proposition 3.1]{Moser:1971-1}}, {\cite[Lemma 4.4]{Owens-Strle:2012-1}}]\label{lem:Dehn_surgery}
	Let $p$ and $q$ be relatively prime integers. Then for any rational $r$,
	\begin{equation*}
	S^3_{r}(T_{p,q})\cong -Y(2;(p,q^*),(q,p^*),(m,n))
	\end{equation*}
	where $0<p^*<q$ (resp. $0<q^*<p$) is the multiplicative inverse of $p$ (resp. $q$) modulo $q$ (resp. $p$), and  $\frac{m}{n}=\frac{pq-r}{pq-r-1}$. 
\end{lemma}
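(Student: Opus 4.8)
The plan is to exploit the fact that the torus knot $T_{p,q}$ is a regular fiber of the standard Seifert fibration of $S^3$, so that surgery along it again produces a Seifert manifold, and then to pin down the resulting Seifert invariant by a careful bookkeeping of slopes and orientations.

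First I would recall the Seifert fibration of $S^3$ over the orbifold $S^2(p,q)$ with two exceptional fibers of orders $p$ and $q$; in this fibration $T_{p,q}$ appears as a generic fiber. Removing an open tubular neighborhood $\nu(T_{p,q})$ yields an exterior $E = S^3 \setminus \nu(T_{p,q})$ that is Seifert fibered over the disk with exactly two exceptional fibers. A standard computation of the local invariants of these two fibers (the orders are $p$ and $q$, and the framing data are governed by the multiplicative inverses of $q$ modulo $p$ and of $p$ modulo $q$) identifies $E$ with the Seifert fibered exterior carrying the pairs $(p,q^*)$ and $(q,p^*)$, with $0<q^*<p$ and $0<p^*<q$ as in the statement.

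Next I would fix the meridian-longitude basis $(\mu,\lambda)$ of $\partial\nu(T_{p,q})$, with $\lambda$ the Seifert-framed longitude, and locate the slope of the regular fiber in this basis. Since the self-linking of a fiber of this fibration is $pq$, the fiber slope on $\partial E$ is $h = pq\,\mu + \lambda$. Performing $r$-surgery means Dehn filling $E$ along the slope $\gamma = a\,\mu + b\,\lambda$, where $r = a/b$ in lowest terms. The filled manifold is again Seifert fibered, and the newly created exceptional fiber has order equal to the geometric intersection number $|\langle \gamma, h\rangle| = |a - pq\,b|$; rewriting this in terms of $r$ gives precisely the numerator of $pq-r$, which matches the multiplicity $m$ read off from the fraction $\tfrac{m}{n} = \tfrac{pq-r}{pq-r-1}$. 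Tracking the framing datum of the new fiber then yields the third pair $(m,n)$, and the base Euler term normalizes to $b=2$.

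The topology here is entirely standard; the genuine work, and the main obstacle, is the orientation and normalization bookkeeping. I would have to reconcile the orientation of $S^3$ and of the surgered manifold with the orientation conventions built into the notation $Y(\,\cdot\,)$, which is the source of the global sign in $-Y(\dots)$; simultaneously I must check that the pairs $(p,q^*)$, $(q,p^*)$ and $(m,n)$ satisfy the coprimality and range constraints of a normalized Seifert invariant, using the Rolfsen-twist and blow-down moves recalled in Section \ref{sec:Seifert_manifolds} to bring any intermediate diagram into normalized form. Since the statement is quoted from \cite{Moser:1971-1} and \cite{Owens-Strle:2012-1}, I would either carry out this bookkeeping directly or, more economically, verify it on the order of $H_1$ and on the Euler number $e$ (both computable on each side from the formulas in Section \ref{sec:Seifert_manifolds}) to confirm that the normalization is exactly the claimed one.
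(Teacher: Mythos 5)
The paper gives no proof of this lemma---it is quoted directly from Moser and Owens--Strle---and your outline (surgery along a regular fiber of the genus-one Seifert fibration of $S^3$, with the new multiplicity read off from the intersection of the filling slope with the fiber slope $pq\,\mu+\lambda$) is exactly the argument those references use, so your approach is correct and matches the source. One small caution: the ``more economical'' verification via $|H_1|$ and the Euler number alone would not pin down the normalized Seifert invariant (these two quantities do not determine the $(\alpha_i,\beta_i)$), so it can only serve as a consistency check on the signs after the direct bookkeeping of the local invariants has been carried out.
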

In particular, since the Seifert invariant of any spherical 3-manifold of type $\mathbf{T}$, $\mathbf{O}$ or $\mathbf{I}$ can be written as \[\pm Y(2;(2,1),(3,2),(*,*))\] by changing the orientations if necessary, it can be obtained by Dehn surgery along the right-handed trefoil knot $T$ by Lemma \ref{lem:Dehn_surgery}. 
\begin{example}\label{ex:O_11}
	The manifold $O_{12(b-2)+11}$ is homeomorphic to the manifold gotten by $(\frac{2(12b-13)}{4b-5})$-surgery along $T$.
	\begin{align*}
	Y(b;(2,1),(3,1),(4,1))&\cong-Y(-b;(2,-1),(3,-1),(4,-1))\\
	&\cong-Y(-b+3;(2,1),(3,2),(4,3))\\
	&\cong-Y(-b+3+(b-1);(2,1),(3,2),(4,3+4(b-1)))\\
	&\cong S^3_{\frac{2(12b-13)}{4b-5}}(T).
	\end{align*}
\end{example}

\section{Obstructions for 3-manifolds admitting rational ball fillings}\label{sec:obstructions}
In this section, we recall our two main obstructions to 3-manifolds bounding rational homology balls, one from Donaldson's diagonalization theorem and the other from Heegaard Floer correction terms. We also discuss how one can compute the correction terms of spherical manifolds.
\subsection{Donaldson obstruction}\label{sec:Donaldson_obstruction}
One of the main obstructions we shall apply to spherical manifolds bounding rational homology balls or having finite order in $\rhcg$ is given from Donaldson's diagonalization theorem.  

As introduced earlier, if a rational homology 3-sphere $Y$ bounds a negative definite 4-manifold $X$ and a rational homology 4-ball $W$, then the intersection form of $X$, $Q_X$, embeds into the standard negative definite lattice of the same rank, $(\Z^{b_2(X)},\langle-1\rangle^{b_2(X)})$. Namely there exists a map $\rho$ from $(H_2(X;\Z)/\text{Tors}, Q_X)$ to $(\Z^{b_2(X)},\langle-1\rangle^{b_2(X)})$ such that $\rho(v_1)\cdot\rho(v_2)=v_1\cdot v_2$ for any $v_1$ and $v_2$ in $H_2(X;\Z)/\text{Tors}$. We summarize this condition as follows.

\begin{Donaldson obstruction} 
	Let $Y$ be a rational homology 3-sphere. Suppose $Y$ bounds a negative definite smooth 4-manifold $X$. If $Y$ bounds a rational homology ball, then the intersection form of $X$ embeds into the standard negative definite lattice of the same rank.
\end{Donaldson obstruction}

\subsection*{Type-$\mathbf{C}$ manifolds  (lens spaces)}
The $\mathbf{C}$-type spherical manifolds are lens spaces. As mentioned in the introduction, lens spaces bounding rational homology 4-balls are completely classified by Lisca in \cite{Lisca:2007-1}, using the Donaldson obstruction. We briefly recall his strategy and results, which are also applied to some other types of spherical 3-manifolds in Section \ref{sec:Donaldson_2}.

Let $p>q>0$ be relative prime integers, and $L(p,q)$ be the lens space obtained by $(p/q)$-framed Dehn surgery along the unknot. Given the Hirzebruch-Jung continued fraction, $p/q=[a_1,a_2,\dots,a_r]$, let $\Gamma_{p,q}$ be the linear graph with $r$ vertices of weights $-a_1,\dots,-a_r$ consecutively. Note that $-L(p,q)$ bounds a negative definite plumbed 4-manifold corresponding to the graph $\Gamma_{p,q}$. Define a numerical value $\mathcal{I}$ associated to $\Gamma_{p,q}$ as  \[\mathcal{I}(\Gamma)\vcentcolon=\sum_{i=1}^r(|a_i|-3).\]
One of the main ingredients in \cite{Lisca:2007-1} and \cite{Lisca:2007-2} is the complete classification of the direct sums of intersection lattices associated to linear graphs with $\mathcal{I}(-)<0$ that can be embedded into the standard diagonal lattice of the same rank. For the lens spaces whose corresponding definite form are embedded into the standard one with the same rank, Lisca showed that there are in fact rational homology balls bounded by the lens spaces.

\begin{theorem}[\cite{Lisca:2007-1}]
	Suppose $p>q>0$ are relatively prime and $\mathcal{I}(\Gamma_{p,q})<0$. Then
	$L(p,q)$ bounds a rational ball if and only if $\Gamma_{p,q}$ can be embedded into the standard definite lattice of the same rank.	
\end{theorem}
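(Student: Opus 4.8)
The statement is an equivalence whose two directions are of quite different character. The forward implication is an immediate consequence of the Donaldson obstruction recorded above, while the reverse implication is the substantive part, requiring both a lattice-theoretic classification and an explicit construction of rational balls. The plan is to treat the two directions separately.

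For the \emph{only if} direction, I would argue as follows. By the discussion preceding the theorem, the manifold $-L(p,q)$ bounds the negative definite plumbing $X$ associated to $\Gamma_{p,q}$, and the intersection form $Q_X$ is exactly the incidence form of the linear graph $\Gamma_{p,q}$. If $L(p,q)$ bounds a rational homology ball $W$, then reversing orientation shows that $-L(p,q)$ bounds $-W$, which is again a rational homology ball. Applying the Donaldson obstruction to $Y=-L(p,q)$ with the filling $X$, the form $Q_X$ embeds into $(\Z^r,\langle-1\rangle^r)$, where $r=b_2(X)$ is the number of vertices of $\Gamma_{p,q}$; that is, $\Gamma_{p,q}$ embeds into the standard negative definite lattice of the same rank. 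No computation is needed here beyond invoking the obstruction.

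For the \emph{if} direction, the strategy is to interpose the combinatorial set $\mathcal{R}$ between the lattice condition and the existence of a filling. First I would carry out the classification of linear graphs $\Gamma_{p,q}$ with $\mathcal{I}(\Gamma_{p,q})<0$ that admit an isometric embedding into $(\Z^r,\langle-1\rangle^r)$: concretely, one seeks vectors $v_1,\dots,v_r\in\Z^r$ with $v_i\cdot v_i=-a_i$, $v_i\cdot v_{i+1}=1$ and $v_i\cdot v_j=0$ for $|i-j|\geq2$, where $p/q=[a_1,\dots,a_r]$. The key point is an inductive analysis: the assumption $\mathcal{I}(\Gamma_{p,q})<0$ forces many of the weights to be small, which pins down how the $v_i$ must distribute among the standard basis vectors, and a careful bookkeeping shows that such an embedding exists precisely when the continued fraction $[a_1,\dots,a_r]$ has the self-similar shape defining membership $p/q\in\mathcal{R}$. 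Having established that the embedding condition is equivalent, within the regime $\mathcal{I}(\Gamma_{p,q})<0$, to $p/q\in\mathcal{R}$, I would then exhibit for each $p/q\in\mathcal{R}$ an explicit rational homology ball bounded by $L(p,q)$; these are produced by a direct handle construction on the families parametrizing $\mathcal{R}$ (equivalently, by realizing $L(p,q)$ as the double branched cover of a ribbon $2$-bridge knot). Chaining the two steps yields the implication: $\Gamma_{p,q}$ embeds $\Rightarrow p/q\in\mathcal{R}\Rightarrow L(p,q)$ bounds a rational ball.

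I expect the main obstacle to be the lattice-embedding classification, namely showing that among graphs with $\mathcal{I}(\Gamma_{p,q})<0$ the embeddable ones are exactly those with $p/q\in\mathcal{R}$. This is a delicate, case-heavy induction on the length $r$ and on the structure of the continued fraction, tracking the possible images of the short vectors $v_i$ and ruling out every configuration outside the prescribed families; the nonexistence half of this statement is precisely what powers the obstruction, while the existence half feeds the ball construction. By contrast, once the admissible families are isolated, the construction of the rational homology balls is comparatively routine.
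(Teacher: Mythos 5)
This theorem is imported verbatim from Lisca's paper and is not proved in the present article, so the only internal point of comparison is the paragraph preceding it, which summarizes Lisca's strategy; your outline reproduces exactly that strategy. Your forward direction is complete and correct as written: $-L(p,q)$ bounds the negative definite plumbing on $\Gamma_{p,q}$ and $-W$ is again a rational ball, so the Donaldson obstruction gives the embedding. Your reverse direction is a faithful roadmap of Lisca's argument --- classify the linear lattices with $\mathcal{I}(\Gamma_{p,q})<0$ that embed in $(\Z^r,\langle-1\rangle^r)$, identify the embeddable ones with $p/q\in\mathcal{R}$, then exhibit rational balls for each member of $\mathcal{R}$ via ribbon $2$-bridge knots --- but, as you acknowledge, the case-heavy embedding classification and the explicit constructions are only described, not carried out, so the substantive content of that direction still resides in Lisca's paper rather than in your argument.
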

Since lens spaces have a symmetry, $-L(p,q)\cong L(p,p-q)$, and $\mathcal{I}(\Gamma_{p,q})+\mathcal{I}(\Gamma_{p,p-q})=-2$ \cite[Lemma 2.6]{Lisca:2007-1}, either $\Gamma_{p,q}$ or $\Gamma_{p,p-q}$ have negative $\mathcal{I}$ value. Thus the above gives the complete classification for lens spaces admitting rational homology ball fillings.

\subsection{Heegaard Floer correction terms}\label{sec:correction_term_obstruction}

In \cite{Ozsvath-Szabo:2003-2}, Ozsv\'ath and Szab\'o associate to a rational homology 3-sphere $Y$ equipped with a spin$^c$ structure $\mathfrak{t}$ over $Y$, a rational-valued invariant $d(Y,\mathfrak{t})$ called the correction term or $d$-invariant. The invariant is analogous to the $h$-invariant of Fr\o yshov in Seiberg-Witten theory \cite{Froyshov:1996}. The correction terms are rational homology spin$^c$ cobordism invariants and satisfy 
\begin{equation*}
	d(-Y,\mathfrak{t})=-d(Y,\mathfrak{t})
\end{equation*} and \begin{equation*}
	d(Y_1\#Y_2,\mathfrak{t}_1\#\mathfrak{t}_2)=d(Y_1,\mathfrak{t}_1)+d(Y_2,\mathfrak{t}_2).
\end{equation*}
In particular, the correction terms satisfy the following property for rational homology 3-spheres that bound rational homology balls.
\begin{theorem}[{\cite[Proposition 9.9]{Ozsvath-Szabo:2003-2}}]
	If $Y$ is a rational homology $3$-sphere that bounds a rational homology four-ball $W$, then \[d(Y,\mathfrak{t})=0\] for any spin$^c$ structure $\mathfrak{t}$ that extends to $W$.
\end{theorem}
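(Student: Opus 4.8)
The plan is to deduce the vanishing from the fundamental inequality of Ozsv\'ath and Szab\'o relating the correction term of the boundary to the characteristic numbers of a negative definite filling, and to apply it twice: once to $W$ and once to the orientation-reversed cobordism $-W$. Recall that \cite[Theorem 9.6]{Ozsvath-Szabo:2003-2} asserts that if $Z$ is a negative definite $4$-manifold whose boundary is a rational homology $3$-sphere $Y'$, then for every $\spinc$ structure $\mathfrak{s}$ on $Z$ one has
\[
c_1(\mathfrak{s})^2 + b_2(Z) \leq 4\,d(Y',\mathfrak{s}|_{Y'}).
\]
I would take this inequality as the main input; the point of the argument is that a rational homology ball is a degenerate, but admissible, instance of a negative definite filling.

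First I would check that $W$ satisfies the hypotheses vacuously and with trivial characteristic numbers. Since $H_2(W;\Q)=0$, the intersection form of $W$ has rank zero and is therefore (vacuously) negative definite, with $b_2(W)=0$. Moreover $H^2(W;\Q)=0$ forces $c_1(\mathfrak{s})$ to be a torsion class for any $\spinc$ structure $\mathfrak{s}$ on $W$, so that $c_1(\mathfrak{s})^2=0$ under the rational intersection pairing. Given a $\spinc$ structure $\mathfrak{t}$ on $Y$ that extends to some $\mathfrak{s}$ over $W$, substituting these values into the inequality yields $0 \leq 4\,d(Y,\mathfrak{t})$, that is, $d(Y,\mathfrak{t})\geq 0$.

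To obtain the reverse inequality I would run the identical argument on the reversed cobordism. The manifold $-W$ is again a rational homology ball, now with boundary $-Y$, and the same $\spinc$ structure $\mathfrak{s}$ restricts on $\partial(-W)=-Y$ to $\mathfrak{t}$, viewed as an element of $\spinc(-Y)=\spinc(Y)$. The same computation gives $d(-Y,\mathfrak{t})\geq 0$, and combining this with the conjugation symmetry $d(-Y,\mathfrak{t})=-d(Y,\mathfrak{t})$ recorded above shows $d(Y,\mathfrak{t})\leq 0$. Together with the previous paragraph this forces $d(Y,\mathfrak{t})=0$, as desired.

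The genuinely substantive ingredient here is the inequality \cite[Theorem 9.6]{Ozsvath-Szabo:2003-2} itself, whose proof rests on the construction of the cobordism maps on $HF^+$, the behaviour of the absolute $\Q$-grading under those maps, and the nonvanishing of $HF^\infty$ in the relevant degree for a negative definite filling; I would treat this as a black box. The only points requiring care on our end are the verification that a rational homology ball meets the vacuous negative definiteness hypothesis with $b_2=0$ and torsion $c_1$, and the bookkeeping of how a single $\spinc$ structure on $W$ induces compatible restrictions to $Y$ and to $-Y$ under orientation reversal. Both are routine once the sign convention in $d(-Y,\mathfrak{t})=-d(Y,\mathfrak{t})$ is fixed.
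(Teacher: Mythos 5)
Your proposal is correct, and it is essentially the original argument: the paper itself only cites \cite[Proposition 9.9]{Ozsvath-Szabo:2003-2} without proof, and Ozsv\'ath and Szab\'o deduce it exactly as you do, by applying their negative-definite inequality (Theorem 9.6) to both $W$ and $-W$, using $b_2(W)=0$ and the torsion of $c_1(\mathfrak{s})$ to get $d(\pm Y,\mathfrak{t})\geq 0$. The only nitpick is terminological: the identity $d(-Y,\mathfrak{t})=-d(Y,\mathfrak{t})$ is the orientation-reversal property of the correction terms, not the conjugation symmetry $d(Y,\mathfrak{t})=d(Y,\bar{\mathfrak{t}})$, though the formula you actually use is the right one.
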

In order to obtain a more effective obstruction, one might need to employ some algebro-topological aspects in this setting. Let $X$ be a closed oriented 3-manifold or a compact oriented 4-manifold. Recall that the set  $\spinc(X)$ of spin$^c$ structures of $X$ is affine isomorphic to $H^2(X;\Z)$. Namely, by fixing a spin$^c$ structure $\mathfrak{s}$ in $\spinc(X)$, we get an isomorphism $H^2(X;\Z)\cong\spinc(X)$, and we denote the image of $\alpha\in H^2(X;\Z)$ in the isomorphism by $\mathfrak{s}+\alpha$. For a 4-manifold $W$ with the boundary 3-manifold $Y$, after fixing a spin$^c$ structure $\mathfrak{s}$ on $W$, we have the following commutative diagram:
\[\begin{CD}
	H^2(W;\Z)    @>\cong>\mathfrak{s}+>  \spinc(W)\\
	@VVV        @VVV\\
	H^2(Y;\Z)     @>\cong>\mathfrak{s}|_Y+>  \spinc(Y),
\end{CD} \]
where the vertical maps are induced by the natural restriction map.

Let $Y$ be a rational homology 3-sphere, and $\lambda$ be the linking form of $Y$. We say a subgroup $\mathcal{M}$ in $H^2(Y;\Z)$ is a \emph{metabolizer} of $Y$ if $\mathcal{M}=\mathcal{M}^\perp$ with respect to $\lambda$. In particular, the order of $\mathcal{M}$ is the square root of that of $H^2(Y;\Z)$. Suppose $W$ is a rational ball bounded by $Y$. Then by the properties of $Y$ and $W$ and the long exact sequence of the pair $(W,Y)$, one can show that the image of the map $H^2(W;\Z)\rightarrow H^2(Y;\Z)$ is a metabolizer in $H^2(Y;\Z)$ \cite{Casson-Gordon:1986-1}. The discussion so far provides the following more detailed obstruction to a rational homology 3-sphere bounding a rational homology ball.
\begin{Correction term obstruction}\label{thm:OS}
	Let $Y$ be a rational homology 3-sphere. If $Y$ bounds a rational homology 4-ball, then there exists a spin$^c$ structure $\mathfrak{s}_0$ on $Y$ and a metabolizer $\mathcal{M}$ in $H^2(Y;\Z)$ such that \[d(Y,\mathfrak{s}_0+\alpha)=0\]
	for any $\alpha\in\mathcal{M}$.
\end{Correction term obstruction}

\subsection{Correction terms of knot surgery manifolds}
Although it is a hard problem in general to compute the correction terms of a rational homology 3-sphere, this becomes particularly tractable if the manifold can be obtained by Dehn-surgery along a knot. Let $K$ be a knot in $S^3$, and let $S^3_{p/q}(K)$ denote the 3-manifold obtained by the $({p}/{q})$-framed Dehn surgery along $K$. Note that there is a natural enumeration $i\in\Z/p\Z$ of spin$^c$ structures over $S^3_{{p}/{q}}(K)$ \cite[Section 4.1]{Ozsvath-Szabo:2003-2}. If $K$ is the unknot $U$, the correction terms of $S^3_{p/q}(U)$ or lens space $L(p,q)$ can be computed by the following recursive formula \cite{Ozsvath-Szabo:2003-2}:
\begin{equation}\label{eq:d_lens_spaces}
d(-L(p,q),i)=\left(\frac{pq-(2i+1-p-q)^2}{4pq}\right)-d(-L(q,r),j)\quad\text{and}\quad d(S^3)=0, 
\end{equation}
where $r$ and $j$ are the reduction modulo $q$ of $p$ and $i$, respectively. According to Jabuka, Robins and Wang \cite{Jabuka-Robins-Wang:2013-1}, the correction terms for lens spaces can be also obtained by a closed formula in terms of the Dedekind-Rademacher sums as
\begin{equation}\label{eq:J-R-W}
	d(L(p,q),i)=2s(q,p;i)+s(q,p)-\frac{1}{2p}
\end{equation}
The Dedekind-Rademacher sum $s(q,p;i)$ is defined as \[s(q,p;i)=\sum_{k=0}^{|p|-1}\bar{B}_1\left(\frac{kq+i}{p}\right)\cdot \bar{B}_1\left(\frac{k}{p}\right)\]and\[s(q,p)=s(q,p;0)-\frac{1}{4},\]
where $\bar{B}_1(x)=x-\lfloor x\rfloor-\frac{1}{2}$ for the usual floor function $x\mapsto\lfloor x\rfloor$.

For a general knot $K$, it is known that the correction terms of $S^3_{p/q}(K)$ can be computed in terms of correction terms of lens spaces and a sequence of non-negative integers $\{V_s(K)\}_{s=0}^\infty$, which are invariants of $K$ derived from the knot Floer chain complex of $K$, by the following formula due to Ni and Wu. 
\begin{theorem}[{\cite[Proposition 1.6]{Ni-Wu:2015}}] 
	Suppose $p,q>0$, and fix $0\leq i\leq p-1$. Then
	\begin{equation}\label{eq:Ni-Wu}
	 d(S^3_{p/q}(K),i)=d(L(p,q),i)-2\max\{V_{\lfloor\frac{i}{q}\rfloor},V_{\lfloor\frac{p+q-1-i}{q}\rfloor}\}.
	\end{equation}
\end{theorem}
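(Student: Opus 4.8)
\emph{Proof strategy.} The plan is to derive the formula from the Ozsv\'ath--Szab\'o rational surgery mapping cone, which is the natural computational engine once the correction term has been reduced to a grading on a $U$-tower. First I would recall the large surgery formula: for $N\gg 0$ the complex $CF^+(S^3_N(K),[s])$ is filtered chain homotopy equivalent to $A_s^+$, the standard subquotient $C\{\max(i,j-s)\ge 0\}$ of the knot Floer complex $CFK^\infty(K)$ (here $(i,j)$ denote the algebraic and Alexander filtration levels, not the spin$^c$ label), while $CF^+(S^3)$ is modeled by $B^+:=C\{i\ge 0\}$. There are two natural $U$-equivariant chain maps $v_s^+,h_s^+:A_s^+\to B^+$, the vertical projection and the horizontal projection composed with the flip coming from the conjugation symmetry of $CFK^\infty(K)$.

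Next I would assemble these into the rational surgery complex. For $p,q>0$ the group $HF^+(S^3_{p/q}(K))$ is the homology of the mapping cone $\mathbb{X}^+(p/q)$ of the map $D:\bigoplus_s (s,A^+_{\lfloor s/q\rfloor})\to\bigoplus_s (s,B^+)$ defined by $D(s,a)=(s,v^+_{\lfloor s/q\rfloor}(a))+(s+p,h^+_{\lfloor s/q\rfloor}(a))$, and the spin$^c$ structure labeled $i\in\Z/p\Z$ is carried by the subcomplex supported on indices $s\equiv i\pmod p$. After restricting to this subcomplex and truncating to a finite sub-mapping-cone---legitimate because $v_s^+$ is an isomorphism on the tower for $s\gg 0$ and $h_s^+$ is for $s\ll 0$---the correction term is read off as the minimal grading of a generator lying in the image of $U^n$, $n\gg 0$, on the $U$-nontorsion part of the homology.

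The heart of the argument is translating the chain maps into arithmetic of nonnegative integers. On the bottom $U$-tower, $v_s^+$ acts as multiplication by $U^{V_s}$ and $h_s^+$ by $U^{H_s}$; the conjugation symmetry yields $H_s=V_{-s}$, the sequence $\{V_s\}$ is nonincreasing with $V_s-V_{s+1}\in\{0,1\}$, and $V_s=0$ for $s$ large. For the unknot every $V_s$ vanishes, so the same cone with $v_s,h_s$ literal projections computes the lens space correction terms, furnishing the baseline term $d(L(p,q),i)$. Comparing the two cones, the inserted factors $U^{V_s}$ and $U^{H_s}$ raise the bottom of the surviving tower; using the monotonicity of $\{V_s\}$ to see that a single $A$-piece controls the obstruction, a grading count shows the shift equals $2$ times the larger of the vertical and horizontal penalties at that piece. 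The vertical penalty is $V_{\lfloor i/q\rfloor}$, and rewriting the horizontal one through $H_s=V_{-s}$ and normalizing the reflected index into the range $0\le i\le p-1$ converts it into $V_{\lfloor (p+q-1-i)/q\rfloor}$, producing the stated maximum.

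The main obstacle I anticipate is not a single deep input but the bookkeeping. I would have to pin down the grading conventions on $A_s^+$ and $B^+$ so that the absolute $\Q$-grading induced on $\mathbb{X}^+(p/q)$ matches the normalization used for $d(L(p,q),i)$, and then verify, under the identification of $\Z/p\Z$ with spin$^c$ structures, that $\lfloor i/q\rfloor$ and $\lfloor (p+q-1-i)/q\rfloor$ are exactly the labels of the two $A$-pieces whose vertical and horizontal maps constrain the surviving tower. Making the reflection $H_s=V_{-s}$ land on the correct floor function, and confirming that the $\max$ (rather than a sum or a longer expression) governs the bottom of the tower---which is where the monotonicity $V_s-V_{s+1}\in\{0,1\}$ is indispensable---is the step demanding the most care.
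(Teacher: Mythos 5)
This statement is imported: the paper quotes it verbatim from Ni--Wu and uses it as a black box, so there is no internal proof to compare against. Your outline reproduces the standard Ni--Wu argument (large surgery formula, the rational surgery mapping cone, truncation, and the reduction of $v_s^+,h_s^+$ to $U^{V_s},U^{H_s}$ on the bottom towers), so the strategy is the right one. One caution on the step you yourself flag as delicate: the two penalties constraining the tower in the spin$^c$ class $i$ are the vertical map $v_{\lfloor i/q\rfloor}$ out of $A_{\lfloor i/q\rfloor}$ and the horizontal map $h_{\lfloor (i-p)/q\rfloor}$ out of the \emph{adjacent} piece $A_{\lfloor (i-p)/q\rfloor}$, both landing in the same $B$-summand indexed by $i$ --- not the two maps out of a single $A$-piece; it is $H_{\lfloor (i-p)/q\rfloor}=V_{-\lfloor (i-p)/q\rfloor}=V_{\lceil (p-i)/q\rceil}=V_{\lfloor (p+q-1-i)/q\rfloor}$ that yields the second index, whereas applying $H_s=V_{-s}$ at $s=\lfloor i/q\rfloor$ would give $V_{\lfloor (q-1-i)/q\rfloor}$ and lose the $p$.
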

If $K$ is a torus knot (more generally, an $L$-space knot), $V_s(K)$ can be gotten by the Alexander polynomial of $K$ \cite{Ozsvath-Szabo:2005-1}. For instance, the right handed trefoil knot $T$ has
\begin{equation*}
V_s(T)=
	\begin{cases}
		1, & s=0 \\
		0, & s>0.
	\end{cases}
\end{equation*}
Thus one may compute correction terms of all spherical manifolds of types $\mathbf{T}$, $\mathbf{O}$ and $\mathbf{I}$, in principle, by the discussion in Section \ref{sec:Dehn-surgery} and the formula above. 

There is another advantage of having Dehn surgery descriptions of our 3-manifolds. Applying the correction term obstruction usually takes more work since we need to determine in advance which spin$^c$ structures over a rational homology 3-sphere can be extended to any hypothetical rational homology 4-ball bounded  by it. See \cite{Owens-Strle:2006-1, Jabuka-Naik:2007-1, Grigsby-Ruberman-Strle:2008-1} for some resolutions of this issue. However, one can easily identify such spin$^c$ structures if the 3-manifold is obtained by Dehn surgery along a knot. 

Recall that if $Y$ is a rational homology 3-sphere bounding a rational homology ball $W$, then the order of $H_1(Y,\Z)$ is $m^2$ for some non-negative integer $m$ from the long exact sequence of the pair $(W,Y)$. In fact, $m$ is the order of the image of the restriction map $H^2(Y;\Z)\rightarrow H^2(W;\Z)$. See\cite[Lemma 3]{Casson-Gordon:1986-1}. Now suppose $Y\cong S^3_{m^2/q}(K)$ for some $m>0$. Then there are exactly $m$ spin$^c$ structures over $Y$ that extend to $W$. On the other hand, it is known that if $Y$ admits at least one correction term of integer value, then there are exactly $m$ spin$^c$ structures on $Y$ whose correction terms are integers, according to Aceto and Golla \cite[Lemma 4.9]{Aceto-Golla:2017-1}. Therefore, we can argue that $Y$ does not bound a rational homology ball, if we find a nonzero integer correction term of a spin$^c$ structure over $Y$.
\begin{proposition}\label{prop:Dehn_surgery}
	Let $Y$ be a manifold obtained by $(m^2/q)$-framed Dehn surgery along a knot in $S^3$. If $Y$ admits a non-vanishing integral correction term for some $\mathfrak{s}\in\text{Spin}^c(Y)$, then $Y$ does not bound a rational homology 4-ball.
\end{proposition}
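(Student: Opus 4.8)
The plan is to argue by contradiction, reducing the statement to a counting argument that compares two distinguished collections of spin$^c$ structures over $Y$, each of which turns out to have cardinality exactly $m$. The proposition is essentially a formal packaging of the two facts recalled just above it, so the work lies entirely in assembling them correctly.

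First I would set up the contradiction hypothesis: suppose $Y\cong S^3_{m^2/q}(K)$ does bound a rational homology 4-ball $W$, while simultaneously there is a spin$^c$ structure $\mathfrak{s}$ on $Y$ with $d(Y,\mathfrak{s})\in\Z\setminus\{0\}$. Since $Y$ is a rational homology sphere with $|H^2(Y;\Z)|=m^2$, the discussion preceding the statement applies: by the Casson--Gordon metabolizer argument together with the correction term obstruction (Ozsv\'ath--Szab\'o, Proposition 9.9), the image of $H^2(W;\Z)\to H^2(Y;\Z)$ is a metabolizer $\mathcal{M}$ of order $m$, the set $S_W$ of spin$^c$ structures on $Y$ extending over $W$ is a coset of $\mathcal{M}$ of cardinality $m$, and every element of $S_W$ satisfies $d(Y,-)=0$.

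Next I would bring in the count of spin$^c$ structures with integral correction term. By Aceto--Golla (Lemma 4.9), there are exactly $m$ spin$^c$ structures over $Y=S^3_{m^2/q}(K)$ whose correction term is an integer; call this set $S_\Z$. Because $0\in\Z$, the vanishing of $d$ on $S_W$ places $S_W\subseteq S_\Z$; but $|S_W|=|S_\Z|=m$ forces $S_W=S_\Z$. Consequently every spin$^c$ structure on $Y$ with integral correction term in fact has vanishing correction term, contradicting the existence of $\mathfrak{s}$ with $d(Y,\mathfrak{s})\in\Z\setminus\{0\}$. This completes the argument.

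The proof is pure bookkeeping once the two cardinality facts are in hand, and the step I would be most careful about is the inclusion $S_W\subseteq S_\Z$ against the matching cardinalities: the whole argument hinges on the coincidence that both the number of extendable spin$^c$ structures and the number of integral-$d$ spin$^c$ structures equal the square root $m$ of $|H_1(Y;\Z)|$. I would therefore cite the two inputs precisely---the extendable count from the metabolizer/correction term obstruction, the integral count from Aceto--Golla---and emphasize that the only reason an extendable structure lands in $S_\Z$ is that its $d$-invariant is actually zero, not merely an integer a priori.
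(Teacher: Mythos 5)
Your argument is exactly the one the paper intends: the $m$ extendable spin$^c$ structures all have vanishing (hence integral) correction terms, while Aceto--Golla's Lemma~4.9 says there are only $m$ spin$^c$ structures with integral correction term at all, so the two sets coincide and any nonzero integral $d$-invariant yields a contradiction. The proof is correct and matches the paper's reasoning, including the careful point that extendable structures land in the integral-$d$ set only because their correction terms actually vanish.
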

We call those spin$^c$ structures on $S^3_{m^2/q}(K)$ that admit integer-valued correction terms the \emph{extendable spin$^c$ structures}. In particular, in terms of the natural identification \[\text{Spin}^c(S^3_{m^2/q}(K))\cong \Z/{m^2}\Z,\] the set of extendable spin$^c$ structures is 
\[
	\{[i_0+m\cdot k]\in \Z/{m^2}\Z\mid k=0,\dots,m\},
\]
where 
\[
	i_0=
	\begin{cases}
		\frac{q-1}{2}&\text{for odd }m\text{ and odd }q,\\
		\frac{m+q-1}{2}&\text{for odd }m\text{ and even }q,\\
		\frac{q-1}{2}\text{ or }\frac{m+q-1}{2}&\text{for even }m.
	\end{cases}	
\]
See \cite[Lemma 4.7]{Aceto-Golla:2017-1}. We remark that if $m$ is even, the correction term of either $[\frac{q-1}{2}]$ or $[\frac{m+q-1}{2}]$ has an integer value. In particular, if $m$ is odd, then $i_0$ is the spin$^c$ structure induced from the unique spin structure on $S^3_{m^2/q}(K)$.

\begin{remark}
	During our work, mostly in Section \ref{sec:I_19} and \ref{sec:d_revisited}, we explicitly compute correction terms for some spherical 3-manifolds. For them we use the formula (\ref{eq:Ni-Wu}) after representing our 3-manifolds in terms of Dehn surgery manifold along the trefoil knot. To have the correction terms of lens space in the formula (\ref{eq:Ni-Wu}) explicitly, it is more convenient to use the formula (\ref{eq:J-R-W}) rather than (\ref{eq:d_lens_spaces}) since it can be easily implemented in a mathematical computer program.
\end{remark}

\section{Spherical manifolds of type $\mathbf{D}$ and $\mathbf{T}$}\label{sec:Lecuona}
In this section, we recall Lecuona's results in \cite{Lecuona:2012-1, Lecuona:2017-1}, which allows us to determine the order of manifolds of type $\mathbf{D}$ and of the form $T_{6(b-2)+3}$. We also classify $\mathbf{T}$-type manifolds admitting rational homology ball fillings.

\subsection{Seifert manifolds with complementary legs}
We say two pairs of integers $(\alpha_1,\beta_1)$ and $(\alpha_2,\beta_2)$ in a Seifert invariant are \emph{complementary legs} if $\beta_1/\alpha_1+\beta_2/\alpha_2=1$. In \cite{Lecuona:2012-1, Lecuona:2017-1}, Lecuona studied the set of 3-legged Seifert manifolds with complementary legs. We recall Lecuona's results.
\begin{proposition}[{\cite[Proposition 3.1]{Lecuona:2017-1}}, {\cite[Section 3.1]{Lecuona:2012-1}}]\label{prop:complementary_legs} 
	Let $Y$ be a Seifert manifolds with an invariant, \[(b;(\alpha_1, \beta_1),(\alpha_2, \beta_2),(\alpha_3, \beta_3)).\] Suppose $(\alpha_1, \beta_1)$ and $(\alpha_2, \beta_2)$ are complementary legs, namely  $\beta_1/\alpha_1+\beta_2/\alpha_2=1$. Then $Y$ is rational homology cobordant to the manifold with the invariant $(b-1;(\alpha_3, \beta_3))$.
\end{proposition}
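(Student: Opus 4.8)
The plan is to exhibit an explicit smooth rational homology cobordism $Z$ with $\partial Z = (-Y)\sqcup Y'$, where $Y' := Y(b-1;(\alpha_3,\beta_3))$ is a lens space. First I would record the structural consequence of complementarity. Since $\gcd(\alpha_i,\beta_i)=1$ and $0<\beta_i<\alpha_i$, the equation $\beta_1/\alpha_1+\beta_2/\alpha_2=1$ rewrites as $\beta_2/\alpha_2=(\alpha_1-\beta_1)/\alpha_1$ with both sides in lowest terms, which forces $\alpha_1=\alpha_2=:\alpha$ and $\beta_2=\alpha-\beta_1$. Hence the two legs are $(\alpha,\beta_1)$ and $(\alpha,\alpha-\beta_1)$, and the Hirzebruch--Jung expansions of $\alpha/\beta_1$ and $\alpha/(\alpha-\beta_1)$ are Riemenschneider dual. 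A first consistency check is the Euler number:
\[ e(Y)=b-\tfrac{\beta_1}{\alpha}-\tfrac{\alpha-\beta_1}{\alpha}-\tfrac{\beta_3}{\alpha_3}=(b-1)-\tfrac{\beta_3}{\alpha_3}=e(Y'), \]
so $Y$ and $Y'$ have equal Euler numbers, as any rational homology cobordism requires.

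Next I would isolate why complementarity is exactly the right hypothesis, via an arithmetic necessary condition. If $Y$ and $Y'$ are rationally cobordant, then $Y\#(-Y')$ bounds a rational homology ball, so $|H_1(Y;\Z)|\cdot|H_1(Y';\Z)|$ must be a perfect square. Using $|H_1|=\alpha_1\alpha_2\alpha_3|e|$, here $|H_1(Y;\Z)|=\alpha^2\alpha_3|e(Y)|$ while $|H_1(Y';\Z)|=\alpha_3|e(Y)|$, so the product is $(\alpha\alpha_3|e(Y)|)^2$ and the ratio is exactly $\alpha^2$. This square is the arithmetic shadow of the mechanism I want: the two dual legs should be removable at the cost of absorbing a piece of order $\alpha^2$ by a rational homology ball, and it is precisely the coincidence $\alpha_1=\alpha_2$ that makes the ratio a perfect square and leaves room for a cobordism.

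For the construction I would work from the plumbing (equivalently, surgery) presentation of $Y$: a central vertex carrying the third leg, together with the linear chain formed by the two dual legs meeting at that central vertex. The key input is that Riemenschneider duality of $\alpha/\beta_1$ and $\alpha/(\alpha-\beta_1)$ makes the dual-leg configuration exactly the type of linear plumbing classified by Lisca as bounding a rational homology ball (the phenomenon underlying the Casson--Harer and Fintushel--Stern rational balls). I would let $Z$ be the trace of the replacement that excises a regular neighborhood of the dual-leg configuration and glues in this rational homology ball $B$; the bookkeeping of the central framing across this trade is what converts $b$ into $b-1$ while leaving the third leg untouched, so that the other boundary component is $Y'$. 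The move is local to the two complementary legs, and the third leg rides along unchanged.

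The hard part will be this last step, and it has two components. First, one must pin down the precise $4$-dimensional piece realizing the trade and verify the framing arithmetic that produces exactly the shift $b\mapsto b-1$ (rather than some other value), which requires careful tracking of orientations and of the two dual continued fractions where they meet at the shared central vertex. Second, and most importantly, one must verify that $Z$ is genuinely a \emph{rational} homology cobordism, i.e. that $H_*(Z,Y;\Q)=0$ and $H_*(Z,Y';\Q)=0$, so that both inclusions induce isomorphisms on rational homology. I would carry this out with a Mayer--Vietoris argument for the decomposition of $Z$ along the lens space $\partial B$, using that $B$ is a rational homology ball and that the relevant sublattice has determinant $\alpha^2$; the perfect-square computation of the second paragraph is exactly what forces these relative groups to vanish rationally. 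I expect the determinant/linking-form verification to be the genuine obstacle, with the reduction and Euler-number steps being routine.
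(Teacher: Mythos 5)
The paper does not actually prove this statement---it is imported verbatim from Lecuona's work (and a more general version appears in Aceto--Golla, Lemma~6.2)---so I am comparing your proposal against the argument in those cited sources. Your preliminary observations are correct and worth recording: complementarity forces $\alpha_1=\alpha_2=:\alpha$ and $\beta_2=\alpha-\beta_1$, the Euler numbers of $Y$ and $Y'=Y(b-1;(\alpha_3,\beta_3))$ agree, and $|H_1(Y;\Z)|/|H_1(Y';\Z)|=\alpha^2$. (One aside is wrong, though: a rational homology cobordism does not ``require'' equal Seifert Euler numbers; the Euler number is not a rational cobordism invariant, and for lens spaces it is not even independent of the chosen fibration.) But these are consistency checks, not progress: the entire content of the proposition is the construction of the cobordism, which is precisely the step you defer.

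The construction you sketch does not work as stated, for two concrete reasons. First, the two complementary legs do not form a connected linear plumbing: in the star-shaped graph they are two disjoint chains hanging off the central vertex, and that central vertex---carrying the weight $b$ and the third leg---sits between them. So Lisca's classification of linear chains bounding rational balls does not apply to ``the dual-leg configuration''; what is true is only that the two chains have boundaries $-L(\alpha,\beta_1)\sqcup L(\alpha,\beta_1)$, which cobound $L(\alpha,\beta_1)\times I$, a rational homology $S^3\times I$ rather than a ball. Second, excising a configuration from a definite filling of $Y$ and gluing in a rational ball is an interior modification of the $4$-manifold: it changes the filling but leaves the boundary $Y$ untouched, so its ``trace'' is not a cobordism from $Y$ to $Y'$ in any standard sense. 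The actual proof is more direct: one attaches to $Y'\times I$ a single round $1$-handle, i.e.\ a $1$-handle followed by a $2$-handle whose attaching circle runs algebraically $\alpha$ times over it. Kirby calculus (sliding the $2$-handle and expanding the rational framings into the two Riemenschneider-dual Hirzebruch--Jung chains) identifies the new boundary with $Y$ and accounts for the shift $b\mapsto b-1$, while the handle structure immediately gives $H_*(W,Y';\Z)\cong\Z/\alpha$ concentrated in degree $1$ and, turning the handles upside down, $H_*(W,Y;\Z)\cong\Z/\alpha$ concentrated in degree $2$; hence $W$ is a rational homology cobordism with no further determinant or linking-form analysis needed. (In Lecuona's Montesinos-link formulation the same cobordism is the double branched cover of a band move joining the two complementary tangles.) Your $\alpha^2$ is the shadow of this $\Z/\alpha$, but as written the proposal stops short of a proof.
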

One can find a more general statement for Seifert manifolds with more than $3$-legs and a complementary legs in \cite[Lemma 6.2]{Aceto-Golla:2017-1}. Lecuona further showed that if such a Seifert manifold bounds a rational homology ball, then the corresponding Montesinos links admit a ribbon surface. More precisely,
\begin{proposition}[{\cite[Proposition 3.4]{Lecuona:2017-1}}]\label{prop:Lecuona_ribbon}
	Let $\Gamma$ be a 3-legged star shaped graph with two complementary legs. The Montesinos link $ML_
	\Gamma\subset S^3$ associated to $\Gamma$ is the boundary of a ribbon surface $F$ with $\chi(F)=1$ if and only if the Seifert space $Y_\Gamma$ associated to $
	\Gamma$ is the boundary of a rational homology ball.
\end{proposition}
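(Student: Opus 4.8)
The plan is to establish the two implications separately, throughout using the standard identification $Y_\Gamma\cong\Sigma(ML_\Gamma)$ of the Seifert space with the double cover of $S^3$ branched along the associated Montesinos link (up to orientation). The implication that a ribbon surface yields a rational ball is the classical branched-cover construction, whereas the converse is where the complementary-leg hypothesis together with Lisca's classification does the real work.

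For the direction from a ribbon surface to a rational ball, I would push the ribbon surface $F$ with $\partial F=ML_\Gamma$ and $\chi(F)=1$ into the interior of $B^4$, obtaining a properly embedded surface $\tilde F$ with the same boundary and Euler characteristic and with no closed components. Setting $W:=\Sigma(B^4,\tilde F)$ for the double cover of $B^4$ branched along $\tilde F$, we get $\partial W\cong Y_\Gamma$ and $\chi(W)=2\chi(B^4)-\chi(\tilde F)=2-1=1$. Because $\tilde F$ is ribbon, i.e.\ carries a radial Morse function with no local maxima, the link analogue of the Casson--Gordon argument \cite{Casson-Gordon:1986-1} gives $H_*(W;\Q)\cong H_*(B^4;\Q)$, so that $W$ is a rational homology $4$-ball bounded by $Y_\Gamma$. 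The only point requiring care here is this homology computation.

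For the converse, suppose $Y_\Gamma$ bounds a rational homology ball, equivalently $[Y_\Gamma]=0$ in $\rhcg$. By Proposition~\ref{prop:complementary_legs} the complementary legs make $Y_\Gamma$ rational homology cobordant to the lens space $L:=Y(b-1;(\alpha_3,\beta_3))$, so $[L]=[Y_\Gamma]=0$ and $L$ itself bounds a rational ball. Writing $L=L(p',q')$, Lisca's classification forces $p'/q'\in\mathcal{R}$, and then \cite{Lisca:2007-1} provides a ribbon surface $F'$ with $\chi(F')=1$ for the two-bridge link $ML_{\Gamma'}$ whose branched double cover is $L$, where $\Gamma'=(b-1;(\alpha_3,\beta_3))$ is the one-legged reduction of $\Gamma$. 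It then remains to upgrade $F'$ to a ribbon surface for the full three-tangle Montesinos link $ML_\Gamma$.

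This last step is the main obstacle. The guiding principle is that the relation $\beta_1/\alpha_1+\beta_2/\alpha_2=1$ realizes, on the level of links, the very cancellation that Proposition~\ref{prop:complementary_legs} realizes on the level of $3$-manifolds: the sub-tangle of $ML_\Gamma$ built from the two complementary rational tangles should bound an explicit embedded band/annulus carrying $ML_\Gamma$ to $ML_{\Gamma'}$ through a ribbon cobordism of Euler characteristic $0$. Gluing this cobordism to $F'$ would then produce the desired $F$ with $\chi(F)=1$. Exhibiting the tangle cancellation as a genuine ribbon move --- not merely a concordance --- and checking its Euler-characteristic contribution is the technical heart; I would carry it out by a direct tangle-calculus computation that tracks the bands through the standard Montesinos diagram of $ML_\Gamma$.
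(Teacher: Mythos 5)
Your architecture is the right one, and it is essentially Lecuona's (the paper does not reprove this statement but cites \cite[Proposition 3.4]{Lecuona:2017-1}): the ``ribbon $\Rightarrow$ rational ball'' direction is the Casson--Gordon branched double cover computation \cite{Casson-Gordon:1986-1}, valid here because $Y_\Gamma$ is a rational homology sphere, so the no-local-maxima condition kills $H_1(W;\Q)$ and then $\chi(W)=2-\chi(\tilde F)=1$ kills $H_2(W;\Q)$; the converse is the reduction through Proposition \ref{prop:complementary_legs} and Lisca's classification. But as written the proposal does not prove the proposition: the step you yourself flag as ``the technical heart'' --- producing an explicit ribbon cobordism of Euler characteristic zero from $ML_\Gamma$ to the two-bridge link $ML_{\Gamma'}$ --- is precisely the entire non-citational content of the statement, and it is asserted rather than carried out. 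It is also not automatic from what you have: Proposition \ref{prop:complementary_legs} gives a rational homology cobordism between the double branched covers, and such a cobordism neither descends to nor is implied by a ribbon cobordism of the branch links; only the reverse implication holds. Indeed, in Lecuona's treatment the logical order is the opposite of yours: the link-level band move is constructed first, and the $3$-manifold cobordism of Proposition \ref{prop:complementary_legs} is obtained as its double branched cover.

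Concretely, the missing content is this. Complementarity forces $\alpha_1=\alpha_2=:\alpha$ and $\beta_2=\alpha-\beta_1$, so the two legs carry the rational tangles $\beta_1/\alpha$ and $(\alpha-\beta_1)/\alpha$; one must exhibit a single band in the standard Montesinos diagram joining these two tangles such that the resulting ribbon move cancels them into a trivial tangle and splits off an unknotted component, which is then capped by a disk. Read radially into $B^4$ this is one saddle followed by one death, hence no local maxima and $\chi=-1+1=0$, so gluing onto Lisca's surface $F'$ does give $\chi(F)=1$ as you intend --- but until that band is exhibited and the isotopy to ``two-bridge link together with a split unknot'' is verified (this is carried out in \cite[Section 3]{Lecuona:2012-1} and in the proof of \cite[Proposition 3.4]{Lecuona:2017-1}), the converse direction is incomplete. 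Two minor further points: when invoking Lisca you must track orientations, since $\mathcal{R}$ is defined with a fixed convention on $L(p,q)$; and $ML_{\Gamma'}$ may be a two-bridge \emph{link}, so you need Lisca's $\chi=1$ ribbon-surface statement for two-bridge links, not merely the slice-ribbon statement for knots --- both are in \cite{Lisca:2007-1}.
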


\subsection{Type-D manifolds (Prism manifolds)}\label{sec:type_D}
Recall that a type-$\mathbf{D}$ manifold admits the Seifert invariant \[(b_0; (2, 1), (2, 1), (\alpha_3,\beta_3)),\] such that $b_0\geq2$ and $\alpha_3>\beta_3>0$ are coprime integers, up to the orientations. The manifolds are usually enumerated by two coprime integers $p>q>0$, like for lens spaces. Let $\frac{p}{q}=[b_0,b_1,\dots,b_r]$, and $D(p,q)$ denote the manifold homeomorphic to the boundary of the 4-manifold corresponding to the left plumbing graph in Figure \ref{fig:manifold_D}. Since $(2, 1)$ and $(2, 1)$ are complementary pairs, we have the following direct corollary of Proposition \ref{prop:complementary_legs}.

\begin{figure}
	\centering
	\includegraphics[width=0.9\textwidth]{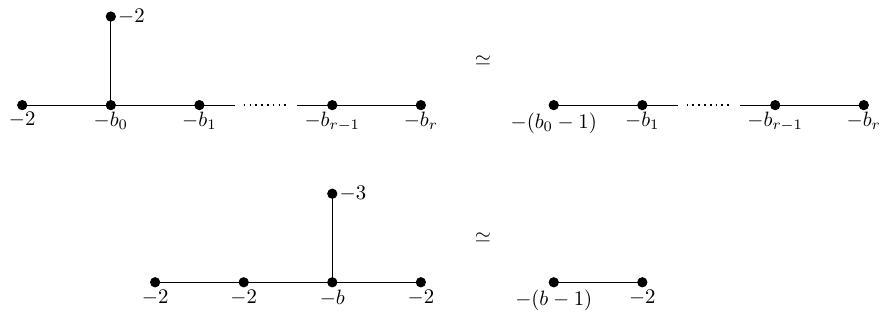}
	\caption{The canonical plumbing graphs of $D(p,q)$ and $T_ {6(b-2)+3}$, and linear graphs associated to manifolds that are rational homology cobordant to them respectively.}
	\label{fig:manifold_D}
\end{figure}

\begin{proposition}
	Let $p>q>0$ be coprime integers. The dihedral manifold $D(p,q)$ is rational homology cobordant to the lens space $-L(p-q,q)$.
\end{proposition}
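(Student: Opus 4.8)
The plan is to apply Proposition~\ref{prop:complementary_legs} directly. The proposition, already established above, states that a $3$-legged Seifert manifold with two complementary legs $(\alpha_1,\beta_1)$ and $(\alpha_2,\beta_2)$ (i.e.\ $\beta_1/\alpha_1+\beta_2/\alpha_2=1$) is rational homology cobordant to the lens space with invariant $(b-1;(\alpha_3,\beta_3))$. So the entire content of the claim is to verify that $D(p,q)$ has a Seifert presentation to which the proposition applies, and then to identify the resulting single-leg invariant with the lens space $-L(p-q,q)$.

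First I would recall from Section~\ref{sec:spherical} that the type-$\mathbf{D}$ manifold $D(p,q)$ has normalized Seifert invariant $(b_0;(2,1),(2,1),(\alpha_3,\beta_3))$, read off from the canonical plumbing graph in Figure~\ref{fig:manifold_D} via the continued fraction $\frac{p}{q}=[b_0,b_1,\dots,b_r]$. The two legs $(2,1)$ and $(2,1)$ are visibly complementary, since $\tfrac{1}{2}+\tfrac{1}{2}=1$. Hence Proposition~\ref{prop:complementary_legs} immediately yields that $D(p,q)$ is rational homology cobordant to the Seifert manifold with invariant $(b_0-1;(\alpha_3,\beta_3))$, which is a lens space (an $r\le 2$ Seifert manifold).

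The remaining task, and the only place where any computation is needed, is the bookkeeping that identifies $Y(b_0-1;(\alpha_3,\beta_3))$ as $-L(p-q,q)$. I would do this by tracking continued fractions: the lens space $Y(b_0-1;(\alpha_3,\beta_3))$ is the one obtained by surgery along the unknot with coefficient $(b_0-1)-\tfrac{\beta_3}{\alpha_3}$, and I expect this to equal the continued fraction $[b_0-1,b_1,\dots,b_r]$ attached to the reduced linear graph on the right of Figure~\ref{fig:manifold_D}. Decreasing the first entry of $[b_0,\dots,b_r]=\frac{p}{q}$ by one corresponds to subtracting $1$ from $\frac{p}{q}$, giving $\frac{p-q}{q}$, and the appearance of the \emph{minus sign} reflects the orientation reversal built into identifying a negative-definite plumbing boundary with a lens space $L(\cdot,\cdot)$ of the stated orientation convention. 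Matching these conventions with the definitions of Section~\ref{sec:Seifert_manifolds} should close the gap cleanly.

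The step I expect to be the genuine (if minor) obstacle is precisely the orientation and continued-fraction normalization: keeping careful track of which Hirzebruch--Jung expansion corresponds to $L(p-q,q)$ versus $L(p-q,q')$ with $q'$ the reduction of $q$ modulo $p-q$ (cf.\ the statement of Theorem~\ref{thm:main}), and confirming the sign so that the result is $-L(p-q,q)$ rather than $+L(p-q,q)$. All of this is routine given the homeomorphism relations for Seifert invariants recalled in Section~\ref{sec:Seifert_manifolds}, so the proof should amount to citing Proposition~\ref{prop:complementary_legs} and recording the continued-fraction identification.
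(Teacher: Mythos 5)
Your proposal is correct and follows exactly the paper's route: the paper also deduces this as a direct corollary of Proposition~\ref{prop:complementary_legs} from the complementarity of the two $(2,1)$ legs, with the identification of the residual invariant $(b_0-1;(\alpha_3,\beta_3))$ as $-L(p-q,q)$ via the continued fraction $[b_0-1,b_1,\dots,b_r]=\frac{p-q}{q}$. Your attention to the orientation convention is the right place to be careful, and your treatment of it is consistent with the paper's.
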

In particular,  the manifold $D(p,q)$ has the same order in $\rhcg$ as that of the lens space $L(p-q,q)$. Notice that if $b_0=2$, then $\frac{p-q}{q}<1$. Since $L(p-q,q)\cong L(p-q,q')$ for $q'\equiv q$ modulo $p-q$, we have that $D(p,q)$ bounds a rational homology ball if and only if $\frac{p-q}{q'}\in \mathcal{R}$, where $0<q'<p-q$ is the reduction of $q$ modulo $p-q$, as the statement in Theorem \ref{thm:main}. 

\subsection{Type-T}\label{sec:type_T}
The $\mathbf{T}$-type manifolds of the form $T_ {6(b-2)+3}$ admitting the Seifert invariant 
	\[(b;(2,1),(3,1),(3,2)),\] 
also possess complementary legs, $(3,1)$ and $(3,2)$. Thus the manifold  $T_{6(b-2)+3}$ is rational homology cobordant to the lens space $-L(2b-3,2)$ by Proposition \ref{prop:complementary_legs}. Then by Lisca's result on the order of lens spaces in $\rhcg$ \cite[Corollary 1.3]{Lisca:2007-2}, the order of $T_ {6(b-2)+3}$ is given as
\begin{itemize}
	\item $1$ if $b=2,6$,
	\item $2$ if $b=4$, and
	\item $\infty$ otherwise.
\end{itemize}

In fact, this together with a homological condition for rational homology spheres bounding rational homology balls gives a complete answer for type-$\mathbf{T}$ manifolds admitting rational ball filling.

Let $Y$ be a $\mathbf{T}$-type spherical manifold, which has the normalized Seifert form \[(b;(2,1),(3,\beta_2),(3,\beta_3)).\]  Observe that the order of $H_1(Y;\Z)$ is equal to 
\begin{equation*}
3\left|6b-3-2(\beta_2+\beta_3)\right|.
\end{equation*}
If $Y$ bounds a rational homology 4-ball, then $|H_1(Y;\Z)|$ is a perfect square. This forces $Y$ to have $\{\beta_2,\beta_3\}=\{1, 2\}$, namely $Y$ is of the form $T_ {6(b-2)+3}$. Therefore, any manifolds of the form $T_{6(b-2)+1}$ or $T_{6(b-2)+5}$ cannot bound a rational homology 4-ball.
\begin{proposition} 
	A spherical manifold $Y$ of type $\mathbf{T}$ bounds a smooth rational homology 4-ball if and only if  $Y$ or $-Y$ is homeomorphic to $T_3$ or $T_{27}$.
\end{proposition}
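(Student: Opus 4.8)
The plan is to combine two facts already established: the topological constraint that a rational homology sphere bounding a rational homology ball must have $|H_1(Y;\Z)|$ a perfect square, and the order computation for the family $T_{6(b-2)+3}$ carried out above. Since bounding a smooth rational homology 4-ball is equivalent to representing the trivial class in $\rhcg$, and since $\rhcg$ is a group (so $-Y$ bounds precisely when $Y$ does), it suffices to work with a fixed orientation: every $\mathbf{T}$-type manifold is, up to orientation, one of $T_{6(b-2)+1}$, $T_{6(b-2)+3}$ or $T_{6(b-2)+5}$ with $b\geq 2$. The ``if'' direction is then immediate, since $T_3$ and $T_{27}$ are exactly $T_{6(b-2)+3}$ for $b=2$ and $b=6$, which we have already shown to have order $1$.

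For the ``only if'' direction, I would first apply the perfect-square obstruction to eliminate the families $T_{6(b-2)+1}$ and $T_{6(b-2)+5}$. Recall that $Y=Y(b;(2,1),(3,\beta_2),(3,\beta_3))$ has $|H_1(Y;\Z)|=3|6b-3-2(\beta_2+\beta_3)|$. For $T_{6(b-2)+1}$ we have $\beta_2+\beta_3=4$, so $|H_1(Y;\Z)|=3|6b-11|$; as $6b-11\equiv 1\pmod 3$, the integer $6b-11$ is prime to $3$ and the $3$-adic valuation of $|H_1(Y;\Z)|$ equals $1$, ruling out a perfect square. For $T_{6(b-2)+5}$ we have $\beta_2+\beta_3=2$, so $|H_1(Y;\Z)|=3|6b-7|$ with $6b-7\equiv 2\pmod 3$, and the same unpaired factor of $3$ obstructs. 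Hence any $\mathbf{T}$-type manifold bounding a rational homology ball must lie in the remaining family $T_{6(b-2)+3}$, for which $\beta_2+\beta_3=3$ and $|H_1(Y;\Z)|=9|2b-3|$ passes the test.

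It remains to pin down which members of $T_{6(b-2)+3}$ actually bound. Here I would simply invoke the order computation already in place: by Proposition \ref{prop:complementary_legs} the complementary legs $(3,1)$ and $(3,2)$ make $T_{6(b-2)+3}$ rational homology cobordant to $-L(2b-3,2)$, and Lisca's determination of the order of lens spaces \cite[Corollary 1.3]{Lisca:2007-2} gives order $1$ exactly when $b=2$ or $b=6$. Reindexing, these are $T_3$ and $T_{27}$, which completes the equivalence.

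All the ingredients---the perfect-square constraint, the formula for $|H_1(Y;\Z)|$, Lecuona's complementary-legs cobordism, and Lisca's lens-space classification---are already available, so the proof is essentially an assembly and I expect no genuinely hard step. The one point deserving care is that the perfect-square test is sharp enough to isolate a single family, and I would record the three congruences $6b-11\equiv 1$, $6b-9\equiv 0$, $6b-7\equiv 2\pmod 3$ explicitly, since the whole elimination turns on whether the factor of $3$ contributed by the $(2,1)$ leg is left unpaired.
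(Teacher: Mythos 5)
Your proposal is correct and follows essentially the same route as the paper: the perfect-square constraint on $|H_1(Y;\Z)|=3\left|6b-3-2(\beta_2+\beta_3)\right|$ eliminates $T_{6(b-2)+1}$ and $T_{6(b-2)+5}$ via the unpaired factor of $3$, and the complementary-legs cobordism to $-L(2b-3,2)$ together with Lisca's classification isolates $b=2,6$. Your explicit recording of the three congruences modulo $3$ is a slightly more detailed rendering of the same elimination step.
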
  
	
\section{$\mathbf{O}$- and $\mathbf{I}$-type manifolds bounding rational homology balls}\label{sec:rational_ball}
In this section, we classify the spherical 3-manifolds of type $\mathbf{O}$ and $\mathbf{I}$ bounding rational homology 4-balls. In fact, this can be answered as a corollary of the results in Section \ref{sec:order}, where we will determine the order of those spherical manifolds in $\rhcg$. Nonetheless we present this section because the argument becomes much easier if one just wants to determine whether a spherical manifold bounds a rational homology ball or not, instead of the exact order of the manifold. This is because the condition that the order of $H_1$ of a rational homology 3-sphere bounding a rational homology ball is a square number reduces the cases we need to examine. Whereas this condition is not applicable to the connected sums of a manifold since any even number of connected sum admits a square order of $H_1$.  

\subsection{Type-O}
The order of $H_1$ of an $\mathbf{O}$-type manifold, which has a Seifert invariant $$(b;(2,1),(3,\beta_2),(4,\beta_3)),$$ where $b\geq2$, and $\beta_2\in\{1,2\}$ and $\beta_3\in\{1,3\}$, equals
$$2\left|12b-6-4\beta_2-3\beta_3\right|.$$ Note that it cannot be a square number since $\beta_3$ is coprime to $4$. Therefore, an $\mathbf{O}$-type manifold cannot bound any rational homology 4-ball.

\subsection{Type-I}
Let $Y$ be an $\mathbf{I}$-type manifold, which has a Seifert invariant 
\[(b;(2,1),(3,\beta_2),(5,\beta_3)),\]
where $b\geq2$, $\beta_2\in\{1,2\}$ and $\beta_3\in\{1,2,3,4\}$. Note that the order of $H_1(Y;\Z)$ equals \[\left|30(b-2)+45-10\beta_2-6\beta_3\right|.\] Considering the quadratic residues modulo $30$, the only cases to make $|H_1(Y,\Z)|$ a square number are $(\beta_2,\beta_3)=(2,4)$ or $(\beta_2,\beta_3)=(2,1)$. Hence if an $\mathbf{I}$-type manifold bounds a rational homology ball, then it should be one of the following families:
\[I_{30(b-2)+1}=Y(b;(2,1),(3,2),(5,4))\] or \[I_{30(b-2)+19}=Y(b;(2,1),(3,2),(5,1)).\]

\subsubsection{Manifolds of the form $I_{30(b-2)+1}$}\label{sec:I_1} 
We can show that the manifold $I_{30(b-2)+1}$ cannot bound any rational homology ball by using the Donaldson obstruction. Note that the manifold $I_{30(b-2)+1}$ bounds the plumbed definite 4-manifold $X$ in Figure \ref{fig:manifold_I_1}. Let us label the vertices with weight $-2$ on the plumbing diagram by $v_1$ to $v_7$ as depicted in Figure \ref{fig:manifold_I_1}. We let $v_i$ simultaneously denote a generator of $H_2(X;\Z)$ represented by the sphere corresponding to the vertex. Suppose there is an embedding $\rho$ of $Q_X$ into the standard negative definite lattice $( \Z^n,\langle-1\rangle^n )$ with the standard basis $\{e_1,\dots,e_n\}$; namely $e_i\cdot e_i=-1$ for each $i$ and $e_i\cdot e_j=0$ for $i\neq j$. Since $v_1\cdot v_1=-2$, we have $\rho(v_1)=\pm e_i\pm e_j$ for some $i$ and $j$. After re-indexing and re-scaling by $\pm1$, we may assume that $\rho(v_1)=e_1-e_2$ without loss of generality. By a similar argument, the image of $v_2$ has the form $\rho(v_2)=e_2-e_3$ since $v_1\cdot v_2=1$. For the image of $v_3$ on the embedding, there could be two choices: $\rho(v_3)=e_3-e_4$ or $-e_2-e_1$. However, assuming the latter implies that $v_1\cdot v_4\equiv v_3\cdot v_4\equiv 1 \ (\text{mod}\ 2)$, which gives a contradiction. Hence we admit the former case. By performing this procedure consecutively, the embedding is expressed as $\rho(v_1)=e_1-e_2$, $\rho(v_2)=e_2-e_3$, $\rho(v_3)=e_3-e_4$, $\rho(v_4)=e_4-e_5$, $\rho(v_5)=e_6-e_7$, $\rho(v_6)=e_7-e_8$ and $\rho(v_7)=e_9-e_{10}$, up to the automorphisms of $(\Z^n,\langle-1\rangle^n)$. Therefore, the rank of the image of $\rho$ is at least $10$, and $I_{30(b-2)+1}$ cannot bound any rational homology ball by the Donaldson obstruction. 
\begin{figure}[t!]
	\includegraphics[width=0.6\textwidth]{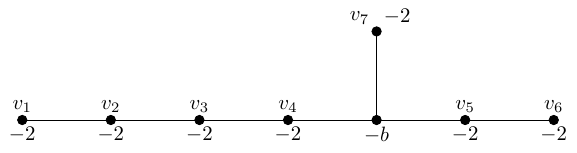}
	\caption{The canonical plumbing graph of the manifold $I_{30(b-2)+1}$.}
	\label{fig:manifold_I_1}
\end{figure}

\subsubsection{Manifolds of the form $I_{30(b-2)+19}$}\label{sec:I_19} 
Suppose $I_{30(b-2)+19}$ bounds a rational homology ball. Then \[|H_1(I_{30(b-2)+19};\Z)|=30(b-2)+19\] must be a square number $m^2$. In order to satisfy this property, $m^2$ should be equal to $(30k+13)^2$ or $(30k+23)^2$ for some $k\in\Z$ by considering square numbers in $\Z/30\Z$. Let us further assume this. 

For the manifold $I_{30(b-2)+19}$, we cannot apply the same obstruction used for $I_{30(b-2)+1}$ since the intersection form of the canonical plumbed manifold of $I_{30(b-2)+19}$ can be embedded into the standard definite diagonal lattice of the same rank. We first observe this. Let $v_1,\dots,v_5$ denote the basis vectors of the intersection form $Q_X$ of the canonical plumbed 4-manifold $X$ corresponding to $I_{30(b-2)+19}$ as depicted in Figure \ref{fig:I_19}, and let $\{e_1,\dots,e_5\}$ be the standard basis of the diagonal lattice $(\Z^5,\langle-1\rangle^5)$. If $m^2=(30k+13)^2$ (i.e. $b=30k^2+26k+7$), we have an embedding of $Q_X$ into $\langle-1\rangle^5$ so that
\begin{align*}
	\rho(v_1)&=e_1-e_2,\\
	\rho(v_2)&=e_3-e_4,\\
	\rho(v_3)&=e_4-e_5,\\
	\rho(v_4)&=e_1+e_2-(e_3+e_4+e_5),\text{ and}\\
	\rho(v_5)&=e_1+2e_2+e_3+e_4+2e_5-(k+1)\{3(e_1+e_2)+2(e_3+e_4+e_5)\}.
\end{align*}
If $m^2=(30k+23)^2$ (i.e. $b=30k^2+46k+19$), then we get an embedding by assigning $v_5$ instead as
\begin{align*}
\rho(v_5)&=e_2-e_3-e_4-(k+1)\{3(e_1+e_2)-2(e_3+e_4+e_5)\}.
\end{align*} 
\begin{figure}
	\includegraphics{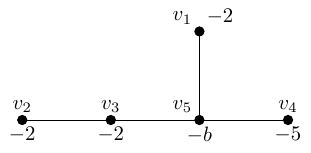}
	\caption{The canonical plumbing graph of the manifold $I_{30(b-2)+19}$.}
	\label{fig:I_19}
\end{figure}

Instead of the Donaldson obstruction, we make use of the condition from Heegaard Floer correction terms. Observe that $I_{30(b-2)+19}$ can be obtained by $(-\frac{30(b-2)+19}{5b-6})$-framed Dehn-surgery along the left-handed trefoil knot. We first consider the correction terms for the associated lens spaces. 
\begin{lemma}\label{lem:d_invariant}
	Let $k\in\Z$ and $m^2=(30k+13)^2$ and $q=150k^2+130k+29$, or $m^2=(30k+23)^2$ and $q=150k^2+230k+89$, and $i_0=\frac{q-1}{2}$. Then 
	$$d(L(m^2,q),i_0+2|m|)=6$$
	for $k\neq0,-1$.
\end{lemma}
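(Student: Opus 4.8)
The plan is to collapse both cases of the lemma into a single arithmetic normal form and then evaluate the correction term by iterating the recursion (\ref{eq:d_lens_spaces}). First I would record the uniform relation tying $m$ to $q$: a direct substitution shows that in \emph{both} cases $6q=m^2+5$, so that $q=\tfrac{m^2+5}{6}$ and $p=m^2=6q-5$, and moreover $q\equiv 4\pmod 5$ either way. This single identity $p=6q-5$ is exactly what makes the Euclidean reduction underlying (\ref{eq:d_lens_spaces}) short: the successive remainders run through
\[
(p,q)=(6q-5,\,q)\ \to\ (q,\,q-5)\ \to\ (q-5,\,5)\ \to\ (5,\,4)\ \to\ (4,\,1)\ \to\ (1,0),
\]
terminating at $S^3$ after exactly five steps (here I use $q\equiv 4\pmod 5$ at the third arrow). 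Hence $d(-L(m^2,q),\cdot)$ is an alternating sum of five explicit quadratic terms $T_0-T_1+T_2-T_3+T_4$, one per reduction, and $d(L(m^2,q),\cdot)$ is recovered via $d(-Y,\mathfrak t)=-d(Y,\mathfrak t)$ together with the standard relabeling of spin$^c$ structures. Alternatively one evaluates the closed Dedekind--Rademacher expression (\ref{eq:J-R-W}) directly, as in the remark above.

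Second, I would locate the index $i_0+2|m|$ inside this tower. The crucial point is the inequality $i_0+2|m|<q$: writing $i_0=\tfrac{q-1}{2}$, this is equivalent to $4|m|<q+1$, and a direct check with $|m|=|30k+13|$ (resp. $|30k+23|$) shows it holds for every integer $k$ \emph{except} $k=0$ and $k=-1$. This is precisely why those two parameters are excluded from the statement; it is reassuring that $k=-1$ in the second case is exactly the value producing the exceptional manifold $I_{49}$ of Proposition \ref{prop:ribbon}. Granting $i_0+2|m|<q$, the first reduction leaves the index unchanged ($j=i\bmod q=i$), after which the index descends by ordinary reductions modulo $q-5$, then $5$, then $4$. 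I would track these reductions carefully, observing that only the top two moduli $q$ and $q-5$ carry the $k$-dependence, while the three lowest moduli are the constants $5,4,1$.

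The hard part will be the final cancellation. The terms $T_0$ and $T_1$ are rational functions of $q$, hence quadratic in $k$, and only after forming the alternating combination $T_0-T_1+T_2-T_3+T_4$ should all $k$-dependence vanish and leave the constant $6$. Verifying this cleanly--keeping exact control of the squared numerators $(2i_\ell+1-p_\ell-q_\ell)^2$ and of the parity and sign data distinguishing $k\ge 1$ (where $m>0$) from $k\le -2$ (where $m<0$)--is the main computational obstacle, together with pinning down the global sign so that the answer is $+6$ and not $-6$. Because the Dedekind--Rademacher route avoids the spin$^c$ relabeling bookkeeping of the recursion and, as noted in the remark, is readily implemented symbolically, I would use it as an independent check that the five-term sum collapses to $6$ for all $k\neq 0,-1$. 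With the value $6$ established, the lemma feeds into the Ni--Wu formula (\ref{eq:Ni-Wu}) and Proposition \ref{prop:Dehn_surgery} to obstruct the corresponding surgeries from bounding rational homology balls.
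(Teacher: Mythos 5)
Your proposal is correct and takes essentially the same route as the paper: the published proof is precisely the iterated application of the recursion (\ref{eq:d_lens_spaces}) along the reduction chain $(m^2,q)\to(q,q-5)\to(q-5,5)\to(5,4)$ that your identity $m^2=6q-5$ (with $q\equiv 4 \bmod 5$) predicts, recorded there as tables of triples $(p,q,i)$ in which the index stays at $i_0+2|m|$ until it collapses to $0$ modulo $5$. Your observation that $4|m|<q+1$ fails exactly for $k=0,-1$ is the explicit form of the paper's remark that these values are excluded so that $p>q>0$ and $p>i\geq 0$ hold at each step.
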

\begin{proof}
	This is obtained by a direct computation using the reciprocal formula (\ref{eq:d_lens_spaces}) for the correction terms of lens spaces. We give tables of triple $(p,q,i)$'s in the computation.  The condition $k\neq0,-1$ ensures that $p>q>0 $ and $p>i\geq 0$ in each steps.
	
	{\centering
	\begin{table*}[h!]
		{\renewcommand{\arraystretch}{1.2}
		\begin{tabular}{c c c c}
		\hline
		Steps & $p$ & $q$ & $i$ \\ 
		\hline 
		$1$ & $(30k+13)^2$ & $150k^2+130k+29$ & $75k^2+65k+14+2m $ \\
		$2$ & $150k^2+130k+29$ & $150k^2+130k+24$ & $75k^2+65k+14+2m $ \\
		$3$ & $150k^2+130k+24$ & $5$ &$75k^2+65k+14+2m $ \\
		$4$ & 5 & 4 & 0  \\
		\hline 
		\end{tabular}}
	\end{table*}
	}

	{\centering
	\begin{table}[h!]
	{\renewcommand{\arraystretch}{1.2}
	\begin{tabular}{c c c c}
	
		\hline
		Steps & $p$ & $q$ & $i$ \\ 
		\hline
		$1$ & $(30k+23)^2$ & $150k^2+230k+89$ & $75k^2+115k+44+2m $ \\
		$2$ & $150k^2+230k+89$ & $150k^2+230k+84$ & $75k^2+115k+44+2m $ \\
		$3$ & $150k^2+230k+84$ & $5$ &$75k^2+115k+44+2m $ \\
		$4$ & 5 & 4 & 0  \\
		\hline 
	\end{tabular}}
\end{table}
}
\end{proof}

Let $T$ be the right-handed trefoil knot. Then by the formula (\ref{eq:Ni-Wu}) we have
\begin{align*}
	d(I_{30(b-2)+19}, i_0+2|m|)&=d(S^3_{-\frac{30(b-2)+19}{5b-6}}(-T),i_0+2|m|)\\
	&=-d(S^3_{\frac{30(b-2)+19}{5b-6}}(T),i_0+2|m|)\\
	&=-(d(L(m^2,q),i_0+2|m|)-2\max\{V_{\lfloor\frac{i}{q}\rfloor}(T),V_{\lfloor\frac{p+q-1-i}{q}\rfloor}(T)\}).
\end{align*}
Therefore, $I_{30(b-2)+19}$ cannot bound a rational homology ball for any $b\neq3,7,11,19$ by Lemma \ref{lem:d_invariant} and Proposition \ref{prop:Dehn_surgery}. Similarly we can compute that 
\[
	d(L(23^2,89), i_0+2\cdot23)=4,
\] 
and hence $I_{529}$ ($b=19$) cannot bound a rational ball neither. The manifolds $I_{289}$ ($b=11$) and $I_{169}$ ($b=7$) also admit nonvanishing integral correction terms,
\[
	d(I_{289}, i_0+2\cdot 17)=2\quad\text{and}\quad d(I_{169}, i_0+6\cdot 13)=2,
\]
by the straightforward computation using the formula (\ref{eq:Ni-Wu}) and the correction term formula of \cite{Jabuka-Robins-Wang:2013-1} for lens spaces.
\begin{proposition}
	The spherical manifold of the form $I_{30(b-2)+19}$, $b\geq2$, does not bound a rational homology ball if $b\neq3$.
\end{proposition}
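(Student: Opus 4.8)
The plan is to package the computations above into a single application of the correction-term obstruction (Proposition \ref{prop:Dehn_surgery}): for every admissible $b\neq 3$ I would exhibit one spin$^c$ structure carrying a nonzero \emph{integer} correction term. First I would record the homological constraint: if $I_{30(b-2)+19}$ bounds a rational homology ball, then $|H_1|=30(b-2)+19$ is a square $m^2$, and examining the squares modulo $30$ forces $m^2=(30k+13)^2$ or $(30k+23)^2$ for some $k\in\Z$, with surgery slope $q=5b-6$ equal to $150k^2+130k+29$ or $150k^2+230k+89$ respectively. Hence $I_{30(b-2)+19}\cong S^3_{-m^2/q}(-T)=-S^3_{m^2/q}(T)$, and its $d$-invariants are the negatives of those of a positive surgery on the right-handed trefoil, where (\ref{eq:Ni-Wu}) together with $V_0(T)=1$ and $V_s(T)=0$ for $s>0$ applies.

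For the generic range I would evaluate at the structure $i_0+2|m|$, where $i_0=\frac{q-1}{2}$. Since $m$ and $q$ are both odd, $i_0$ is the spin$^c$ structure induced by the spin structure and $i_0+2|m|$ lies in the set of extendable structures of Proposition \ref{prop:Dehn_surgery}, so its correction term is an integer. The key input is Lemma \ref{lem:d_invariant}, which pins the lens-space term to the constant value $d(L(m^2,q),i_0+2|m|)=6$ for all $k\neq 0,-1$. One then checks that $4|m|\le q$ in this range, so the index $i=i_0+2|m|$ satisfies $0\le i<q$ and $\lfloor i/q\rfloor=0$; the Ni--Wu correction is therefore $2\max\{V_{\lfloor i/q\rfloor},V_{\lfloor(m^2+q-1-i)/q\rfloor}\}=2V_0=2$. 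This gives
\[
d(I_{30(b-2)+19},\,i_0+2|m|)=-\bigl(6-2\bigr)=-4\neq 0,
\]
and Proposition \ref{prop:Dehn_surgery} rules out a rational ball for every $b$ outside the four values $b=3,7,11,19$ attained at $k=0,-1$.

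It remains to treat $b=7,11,19$, where Lemma \ref{lem:d_invariant} fails because the recursion (\ref{eq:d_lens_spaces}) leaves the admissible range $p>q>0$, $0\le i<p$. Here I would compute the relevant $d$-invariants directly, using the closed Dedekind--Rademacher formula (\ref{eq:J-R-W}) for the lens-space contribution in (\ref{eq:Ni-Wu}); this yields $d(I_{529},i_0+2\cdot 23)=4$, $d(I_{289},i_0+2\cdot 17)=2$ and $d(I_{169},i_0+6\cdot 13)=2$, all nonzero integers, so Proposition \ref{prop:Dehn_surgery} excludes these cases too. The single value $b=3$ is exactly the exception allowed by the statement.

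The main obstacle is not the final deduction but making the correction-term computation uniform across the infinite family. The delicate points are: verifying the constancy $d(L(m^2,q),i_0+2|m|)=6$ of Lemma \ref{lem:d_invariant} (this is what forces a fixed nonzero answer, independent of $k$), confirming that $\lfloor i/q\rfloor=0$ throughout the generic range so the $V$-contribution is always $2V_0$, and handling the boundary cases $b=7,11,19$ one at a time --- notably spotting that for $b=7$ one must use the structure $i_0+6\cdot 13$, since the naive choice $i_0+2\cdot 13$ need not detect the obstruction.
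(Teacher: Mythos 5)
Your proposal is correct and follows essentially the same route as the paper: reduce to the square cases $m^2=(30k+13)^2$ or $(30k+23)^2$, realize $I_{30(b-2)+19}$ as $-S^3_{m^2/q}(T)$ with $q=5b-6$, apply Lemma \ref{lem:d_invariant} together with the Ni--Wu formula (\ref{eq:Ni-Wu}) and Proposition \ref{prop:Dehn_surgery} for $k\neq 0,-1$, and then dispose of the exceptional values $b=7,11,19$ by direct computation at the same spin$^c$ structures the paper uses. Your explicit verification that $\lfloor i/q\rfloor=0$ so that the $V$-contribution is exactly $2V_0=2$ (hence $d=-4$) is a welcome detail the paper leaves implicit, but it is not a different argument.
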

\subsection{The manifold $I_{49}$ bounding a rational homology ball} The manifold $I_{49}$ is the only remaining case of spherical manifolds that we have not determined if it bounds a rational homology ball. Note that all correction terms of extendable spin$^c$ structures on $I_{49}$ vanish, i.e.
\[
	d(I_{49},i_0+7s)=0, 
\]
for $i_0=4$ and $s=0,\dots,6$. We shall show that the manifold $I_{49}$ in fact bounds a rational homology ball by proving a stronger statement that this manifold is the double cover of $S^3$ branched along a \emph{ribbon} knot. Recall that any Seifert fibered rational homology sphere is the double cover of $S^3$ along a Montesinos link \cite{Montesinos:1973-1} (See also \cite[Chapter 2]{Lecuona:2012-1}). The corresponding link to a Seifert manifold can be found from the plumbing graph associated to the manifold as follows . To each vertex with weight $n$ in the graph, we associate $D^1$ bundle over $S^1$ with $n$ half twists, and to each adjacent vertices we perform plumbings of the corresponding pair of bundles. Then the boundary of the 2-manifold constructed in the 3-sphere is the link to produce the Seifert manifold by the double branched cover. We denote the Montesinos link corresponding to the Seifert invariant $(b;(\alpha_1,\beta_1),\dots,(\alpha_r,\beta_r))$ by 
$M(b;(\alpha_1,\beta_1),\dots,(\alpha_r,\beta_r))$. For instance, the Montesinos knot associated to the manifold $I_{49}$ is depicted below in Figure \ref{fig:I_49}.
\begin{proposition}\label{prop:ribbon}
	The Montesinos knot $M(3;(2,1),(3,2),(5,1))$ is  a ribbon knot.
\end{proposition}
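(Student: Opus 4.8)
The plan is to produce an explicit ribbon disk bounded by the Montesinos knot $K := M(3;(2,1),(3,2),(5,1))$: a knot bounding such a disk is ribbon by definition, and this is precisely what is needed for the surrounding argument, since a ribbon (hence slice) knot has branched double cover bounding a rational homology $4$-ball by the classical observation of \cite[Lemma 2]{Casson-Gordon:1986-1}. First I would write down the knot diagram directly from the plumbing graph of $I_{49}$ following the recipe described just before the statement---replacing each weighted vertex by a band over $S^1$ with the prescribed number of half-twists and plumbing along the edges---to obtain the diagram drawn in Figure \ref{fig:I_49}. The immediate goal is to isotope this diagram into a tractable form; the most favorable outcome would be to recognize $K$ as a \emph{symmetric union} of some knot with its mirror image, which would certify ribbonness at once.

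Failing an obvious symmetric-union presentation, the main approach is to exhibit a single \emph{band move} (an embedded saddle) on the diagram of $K$ whose result is the two-component unlink. Recall that a knot is ribbon exactly when it can be carried to an unlink by attaching bands whose cores, together with the knot, cobound disjointly embedded disks; the simplest case is a single band taking $K$ to a $2$-component unlink, which then caps off to a ribbon disk with one ribbon singularity. I would search among the bands joining the three rational-tangle regions of the diagram, perform the saddle, and simplify the resulting link by Reidemeister moves until it visibly splits as two unknotted, unlinked circles, presenting the whole deformation as a short movie of diagrams that makes the ribbon disk explicit.

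The step I expect to be the main obstacle is verifying that the post-band link is genuinely the unlink and not merely a link that simplifies partway; choosing the wrong band produces a two-component link that looks split but remains linked or knotted, so locating the correct band is the creative and error-prone part. To guard against a false positive I would cross-check any candidate against computable invariants: the determinant of $K$ equals $|H_1(I_{49};\Z)| = 49$, a perfect square consistent with sliceness, and I would confirm the Fox--Milnor condition $\Delta_K(t) \doteq f(t)f(t^{-1})$ as a necessary sanity check, as well as compatibility of the band with the metabolizer already visible in the vanishing $d$-invariant computation for $I_{49}$. Once a correct band is pinned down and the reduction to the unlink is carried out, the ribbon disk is produced and the proposition follows.
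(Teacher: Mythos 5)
Your overall strategy is the same as the paper's: certify ribbonness by exhibiting a single band move carrying $M(3;(2,1),(3,2),(5,1))$ to the two-component unlink and then capping off. However, your proposal stops exactly where the actual mathematical content begins. The entire proof consists of producing the band and verifying the resulting link is the unlink; ``I would search among the bands joining the three rational-tangle regions'' is a plan to look for a proof, not a proof, and nothing in your write-up guarantees such a search succeeds or terminates. The cross-checks you offer (determinant $49$ being a square, the Fox--Milnor factorization of $\Delta_K$, consistency with the vanishing $d$-invariants of $I_{49}$) are only necessary conditions for sliceness and cannot certify that any candidate band works, so they do not close the gap.

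What is genuinely missing is the paper's method for \emph{locating} the band, which is the substantive idea here: following Lecuona's technique from \cite[Section 3]{Lecuona:2012-1}, one uses an embedding of the intersection lattice of the canonical plumbing of $I_{49}$ into the standard diagonal lattice of the same rank to produce a $(-1)$-framed unknot, added to the surgery diagram of $I_{49}$ viewed as a strongly invertible link, such that consecutive blow-downs yield $S^1\times S^2$; the branch set of that $(-1)$-framed unknot under the involution is precisely the band inducing the ribbon move to the two-component unlink. (Note that $\mathcal{I}(\Gamma)=-1$ here, so the case falls just outside the hypotheses of Lecuona's algorithm, but the technique still produces the band.) Without either this mechanism or an explicit diagrammatic movie exhibiting the band and the isotopy to the unlink, your argument does not establish the proposition.
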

\begin{figure}[t]
	\includegraphics[width=0.8\textwidth]{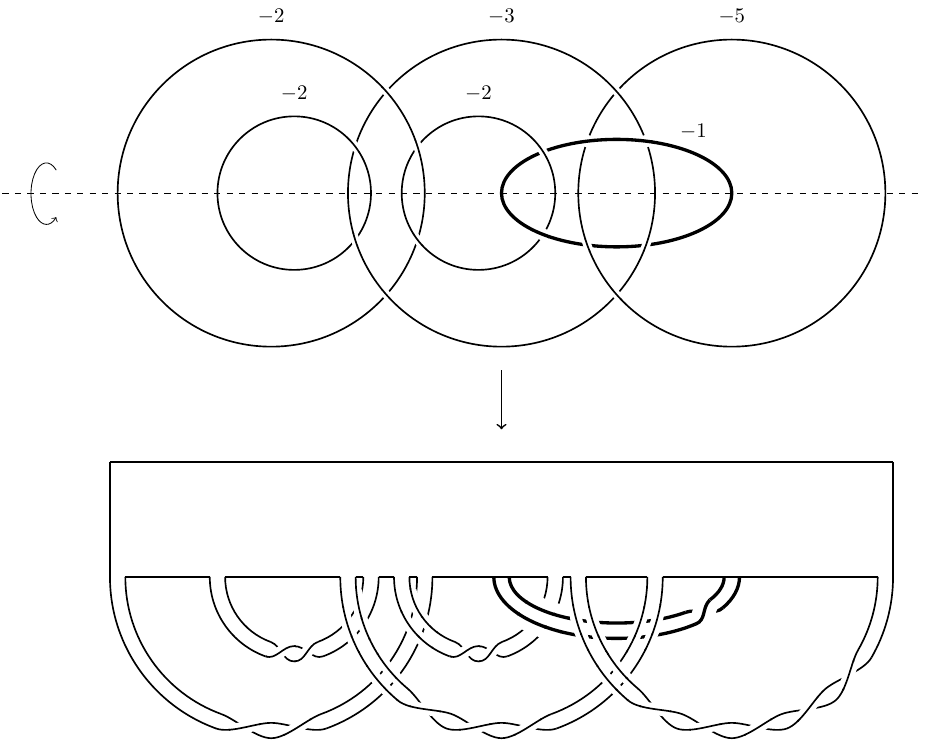}
	\caption{A surgery diagram of $I_{49}$ as a strongly invertible link and its natural branch set, the Montesinos knot $M(3;(2,1),(3,2),(5,1))$. The band associated to the thick $(-1)$-framed unknot induces a ribbon move to the two component unlink.}
	\label{fig:I_49}
\end{figure} 
\begin{proof}
	It is a well-known fact that if there is a band that induces a ribbon move from a knot to the two component unlink, then the knot is ribbon. A band attached to the Montesinos knot $M(3;(2,1),(5,1),(3,2))$ is given in Figure \ref{fig:I_49}, and one can check that this can be isotoped to the two component unlink by performing the ribbon move along the band.
	
	We found the band by using a technique of Lecuona in \cite[Section 3]{Lecuona:2012-1}. Lecuona considered the class of Montesinos knots whose branched cover has the plumbing graph with $\mathcal{I}(-)\leq-2$, and gave an algorithm to find a band if it is a ribbon knot. Although the graph $\Gamma$ associated to our Montesinos knot $M(3;(2,1),(5,1),(3,2))$ has $\mathcal{I}(\Gamma)=-1$, Lecuona's technique was still useful to find a band attached to it. Considering an embedding of the intersection lattice of $I_{49}$ to the standard diagonal one of the same rank, we find a $(-1)$-framed unknot added to the surgery diagram of $I_{49}$, as a \emph{strongly invertible link} depicted in Figure \ref{fig:I_49}. One can easily check that consecutive blow-down's from the framed link diagram with the $(-1)$-framed unknot results in $S^1\times S^2$. In this case, the band corresponding to the branch set of the $(-1)$-framed unknot, induces a ribbon move to the two component unlink. 
\end{proof}

The discussion in the last two sections gave the proof of Theorem \ref{thm:main}, responding which spherical 3-manifolds bound rational homology balls. 

\section{Orders of spherical 3-manifolds in $\Theta^3_\Q$}\label{sec:order}
In this section, we discuss more generally the order of spherical 3-manifolds in $\Theta^3_\Q$. The order of lens spaces were completely determined by Lisca in \cite{Lisca:2007-2}. In Section \ref{sec:Lecuona}, we observed that any $\mathbf{D}$-type manifolds and manifolds of the form $T_{6(b-2)+3}$ are rational homology cobordant to lens spaces, and their order is the same as that of the corresponding lens spaces. Now we determine the order of all other types of spherical manifolds, using both Donaldson and correction term obstructions again.

\subsection{Finite order obstruction from Heegaard Floer correction terms}\label{sec:mu_bar}	
The Heegaard Floer correction term for a certain spin$^c$ structure can be used to obstruct some rational homology 3-spheres to have finite order in $\Theta^3_\Q$. Note that if a rational homology 3-sphere $Y$ has odd $|H_1(Y;\Z)|$, then it admits a unique spin structure on $Y$ since $H_1(Y;\Z/2\Z)=0$.
\begin{proposition}\label{prop:infinite}
	Let $Y$ be a rational homology 3-sphere for which $|H_1(Y;\Z)|$ is odd, and let $\mathfrak{s}_0$ be the spin$^c$ structure induced from the unique spin structure on $Y$. If $Y$ has finite order in $\Theta^3_\Q$, then \[d(Y,\mathfrak{s}_0)=0.\]
\end{proposition}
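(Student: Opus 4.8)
The plan is to exploit the $d$-invariant as a rational homology cobordism invariant together with its additivity under connected sum. Suppose $Y$ has finite order $n$ in $\Theta^3_\Q$, so that the connected sum $nY = Y \# \cdots \# Y$ ($n$ copies) bounds a rational homology ball. First I would track the spin structure through the connected sum: since $|H_1(Y;\Z)|$ is odd, $Y$ carries a unique spin structure, and hence so does $nY$, namely the connected sum $\mathfrak{s}_0 \# \cdots \# \mathfrak{s}_0$ of the unique spin$^c$ structures $\mathfrak{s}_0$ induced by the spin structures. The point is that this distinguished spin$^c$ structure on $nY$ must extend over any rational homology ball it bounds, because it is characterized intrinsically (as the self-conjugate spin$^c$ structure, or equivalently the one coming from a spin structure) and such a structure always extends over a rational homology ball bounded by $nY$.

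Next I would invoke the correction term vanishing theorem (\cite[Proposition 9.9]{Ozsvath-Szabo:2003-2}, stated above): since the spin$^c$ structure $\mathfrak{s}_0 \# \cdots \# \mathfrak{s}_0$ extends over the rational homology ball bounded by $nY$, its correction term vanishes, i.e.
\[
d\bigl(nY,\, \mathfrak{s}_0 \# \cdots \# \mathfrak{s}_0\bigr) = 0.
\]
Then, applying the additivity formula
\[
d(Y_1 \# Y_2,\, \mathfrak{t}_1 \# \mathfrak{t}_2) = d(Y_1,\mathfrak{t}_1) + d(Y_2,\mathfrak{t}_2)
\]
recorded above, I would conclude
\[
n \cdot d(Y, \mathfrak{s}_0) = d\bigl(nY,\, \mathfrak{s}_0 \# \cdots \# \mathfrak{s}_0\bigr) = 0,
\]
and since $n > 0$ this forces $d(Y,\mathfrak{s}_0) = 0$, as desired.

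The main obstacle, and the step requiring the most care, is justifying that the distinguished spin$^c$ structure $\mathfrak{s}_0 \# \cdots \# \mathfrak{s}_0$ genuinely extends over the hypothetical rational homology ball. The earlier discussion guarantees only that \emph{some} metabolizer's worth of spin$^c$ structures extend; I need that this particular one is among them. The cleanest route is to observe that $\mathfrak{s}_0$ on $nY$ is the unique self-conjugate spin$^c$ structure (equivalently the one fixed by conjugation, which exists and is unique precisely because $|H_1(nY;\Z)|$ is odd), and that the set of extendable spin$^c$ structures is preserved by conjugation; a conjugation-invariant subset of odd size must contain the unique fixed point. Alternatively, one argues directly that a spin structure on the boundary $nY$ of a spin (hence rational homology) ball always extends, using that the obstruction to extending lives in a $2$-torsion group while $H^*(nY;\Z)$ has odd order. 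Making this extension argument rigorous—rather than the connected-sum bookkeeping or the formula application, which are routine—is where the substantive content lies.
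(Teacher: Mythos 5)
Your proposal is correct and follows essentially the same route as the paper: both argue that the set of extendable spin$^c$ structures is conjugation-invariant and of odd order (the square root of the odd $|H_1|$), hence contains the unique conjugation-fixed spin$^c$ structure coming from the spin structure, and then conclude via additivity of $d$ under connected sum. The only cosmetic difference is that the paper establishes the extension claim for a single copy of $Y$ and then observes $\#^n\mathfrak{s}_0$ is the spin structure on $\#^nY$, whereas you run the identical argument directly on $\#^nY$.
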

\begin{proof}
	Let $Y$ be a rational homology 3-sphere with odd $|H_1(Y;\Z)|$. We first claim that the spin$^c$ structure $\mathfrak{s}_0$ induced from the unique spin structure on $Y$ extends to any rational homology ball bounded by $Y$. Recall that if a spin$^c$ structure on $Y$ extends to $W$, then its conjugate also does. Since the set of extendable spin$^c$ structures on $Y$ to $W$ is also of odd order (the square root of $|H_1(Y;\Z)|$), it must contain $\mathfrak{s}_0$, which is the unique spin$^c$ structure preserved by the conjugation, i.e. the one from the spin structure. We remark that the same argument appears in the proof of \cite[Proposition 4.2]{Stipsicz:2008-1}, although it is stated only for rational homology 3-spheres obtained from plumbed graphs. 

	Notice that $\#^n\mathfrak{s}_0$ is the spin$^c$ structure induced from the unique spin structure on $\#^nY$. Hence if $\#^nY$ bounds a rational homology ball for some $n$, in other words, $Y$ has finite order in $\Theta^3_\Q$, then
	\begin{equation*}
		d(\#^nY,\#^n\mathfrak{s}_0)=n\cdot d(Y,\mathfrak{s}_0)=0
	\end{equation*}
	by the property of correction terms under the connected sum and Theorem \ref{thm:OS}.
\end{proof}
\begin{remark}
	We remark that the parity condition in the proposition is necessary. For instance, it is well known that $L(p^2,p-1)$ bounds a rational homology ball, but one can check that the correction terms for spin structures on $L(p^2,p-1)$ are non-vanishing when $p$ is even.
\end{remark}
Now we apply the above to spherical 3-manifolds $Y$ with odd $|H_1(Y;\Z)|$. In \cite{Stipsicz:2008-1} Stipsicz showed that a correction term for spin spherical 3-manifold, or more generally for any spin link of a rational surface singularity, can be identified with the $\overline{\mu}$-invariant of Neumann \cite{Neumann:1980-1} and Siebenmann \cite{Siebenmann:1980-1} as
\begin{equation*}
	-4d(Y,\mathfrak{s}_0)=\overline{\mu}(Y,\mathfrak{s}_0).
\end{equation*}
In particular, $\overline{\mu}(Y,\mathfrak{s}_0)$ can be computed from the plumbing graph of the canonical definite 4-manifold $X$ of $Y$ \cite{Neumann:1980-1} (See also Section 2 of \cite{Stipsicz:2008-1}). Let $\Sigma$ denote the \emph{Wu surface} of $\mathfrak{s}_0$, namely the surface that represents the Poincar{\'e} dual of $c_1(\mathfrak{s}_0)$ and has coordinates $0$ or $1$ in the basis of $H_2(X;\Z)$ represented by embedded 2-spheres in the plumbing diagram of $X$. Then \[\overline{\mu}(Y,\mathfrak{s}_0)={\sigma(X)-[\Sigma]^2},\]
where $\sigma(X)$ is the signature of the intersection form of $X$.
\begin{corollary}
	The following spherical manifolds admit non-vanishing correction terms for the spin structures, and hence have infinite order in $\Theta^3_\Q$.
	\begin{itemize}
		\item $T_{6(b-2)+k}$ with any $b$ and $k=1,5$.
		\item $I_{30(b-2)+k}$ with any $b$ and $k=1,7,11,13,29$.
		\item $I_{30(b-2)+17}$ with odd $b$.
		\item $I_{30(b-2)+19}$ and $I_{30(b-2)+23}$ with even $b$.
	\end{itemize}
\end{corollary}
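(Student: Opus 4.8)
The plan is to reduce each case to the nonvanishing of the Neumann--Siebenmann $\overline{\mu}$-invariant and then to evaluate $\overline{\mu}$ from the plumbing graph. First I would record that every manifold on the list has odd first homology: in the $\mathbf{T}$-cases $|H_1(Y;\Z)|=3\,(6(b-2)+k)$ with $k\in\{1,5\}$, and in the $\mathbf{I}$-cases $|H_1(Y;\Z)|=30(b-2)+k$ with $k$ odd, so the order is odd throughout. Consequently Proposition~\ref{prop:infinite} applies, and to show that such a $Y$ has infinite order in $\rhcg$ it suffices to prove $d(Y,\mathfrak{s}_0)\neq0$ for the spin structure $\mathfrak{s}_0$. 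Since $-4\,d(Y,\mathfrak{s}_0)=\overline{\mu}(Y,\mathfrak{s}_0)$, the goal becomes $\overline{\mu}(Y,\mathfrak{s}_0)\neq0$.

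Next I would compute $\overline{\mu}$ from Neumann's formula $\overline{\mu}(Y,\mathfrak{s}_0)=\sigma(X)-[\Sigma]^2$, where $X$ is the canonical negative definite plumbing of $Y$ (available since $b\geq2$ forces $e(Y)>0$) and $\Sigma$ is the Wu surface. For a plumbing on vertices $v_1,\dots,v_n$ of weights $-a_1,\dots,-a_n$ one has $\sigma(X)=-n$, and $\Sigma$ is represented by the unique characteristic vector $w=\sum_i w_iv_i$ with $w_i\in\{0,1\}$, cut out by the congruences $w\cdot v_i\equiv v_i\cdot v_i\pmod 2$ at every vertex. Writing $e_w$ for the number of edges joining two vertices carrying $w_i=1$, this yields the closed form
\[
\overline{\mu}(Y,\mathfrak{s}_0)=-n+\sum_{i:\,w_i=1}a_i-2e_w,
\]
so the problem is reduced to a finite combinatorial evaluation once $w$ is determined.

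The key structural observation I would use is that the canonical plumbing of each listed manifold is star-shaped, consisting of a single central vertex of weight $-b$ together with three legs whose lengths and weights are fixed by the Hirzebruch--Jung expansions of the leg fractions $\alpha_i/\beta_i$ (namely $2/1$, $3/\beta_2$, $3/\beta_3$ in the $\mathbf{T}$-cases, and $2/1$, $3/\beta_2$, $5/\beta_3$ in the $\mathbf{I}$-cases); none of these expansions involves $b$. Hence $n$, the leg data, and all the characteristic congruences except the one at the central vertex are independent of $b$, and the parity of the central weight $-b$ is the only way $b$ enters. Solving the congruences then produces a Wu vector depending only on the parity of $b$, and substituting into the displayed formula expresses $\overline{\mu}$ explicitly on each parity class. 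I would finish by checking, family by family, that this value is nonzero on exactly the parity and range asserted: for all $b$ in the $\mathbf{T}$-cases and the five unrestricted $\mathbf{I}$-cases, for odd $b$ in $I_{30(b-2)+17}$, and for even $b$ in $I_{30(b-2)+19}$ and $I_{30(b-2)+23}$.

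The main obstacle will be the correct determination of $w$ at the central vertex, because it is precisely that coordinate which toggles with the parity of $b$ and produces the parity restrictions in the statement; I would need to confirm both that for the \emph{excluded} parity $\overline{\mu}$ genuinely vanishes (so that no case is overclaimed) and that for the \emph{included} parity it does not. A second, minor point to watch is that when the central vertex lies on $\Sigma$ the term $\sum_{i:\,w_i=1}a_i$ acquires a summand $+b$, making $\overline{\mu}$ an affine function of $b$; one must then verify that this affine function has no root in the relevant range, which is immediate once its constant term and slope are computed.
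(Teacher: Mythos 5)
Your proposal follows the paper's proof essentially verbatim: both reduce to showing $d(Y,\mathfrak{s}_0)\neq 0$ via Proposition~\ref{prop:infinite}, invoke Stipsicz's identification $-4d(Y,\mathfrak{s}_0)=\overline{\mu}(Y,\mathfrak{s}_0)$, and evaluate $\overline{\mu}=\sigma(X)-[\Sigma]^2$ from the Wu surface of the canonical star-shaped plumbing, noting that only the parity of the central weight $-b$ affects the answer. The family-by-family verification you defer is exactly what the paper also leaves as a routine check, working out only the $T_{6(b-2)+1}$ case explicitly.
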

\begin{proof}
 	For those manifolds, we compute the $\overline\mu$-invariants (and hence $d$-invariants) for the spin structures by the algorithm in \cite[Section 2]{Stipsicz:2008-1} using the canonical plumbing graphs. For instance, the plumbing graph associated to $T_{6(b-2)+1}$ is depicted in Figure \ref{fig:T-1}. The Wu surface $\Sigma$ corresponding to the spin structure on $T_{6(b-2)+1}$ is the sphere corresponding to the $-2$ vertex on the short leg if $b$ is odd, and the empty surface if $b$ is even. Hence, \begin{equation*}
		-4d(T_{6(b-2)+1},\mathfrak{s}_0)=\overline\mu(T_{6(b-2)+1},\mathfrak{s}_0)=\sigma(X)-[\Sigma]^2=
		\begin{cases}
			-4\quad\text{for odd }b\\
			-6\quad\text{for even }b.\end{cases}
	\end{equation*}
	By applying the algorithm to other types of manifolds listed, one can check that the correction terms for the spin structures on the manifolds do not vanish. Therefore, those have infinite order in $\rhcg$ by Proposition \ref{prop:infinite}.
	\begin{figure}[t!]
		\includegraphics{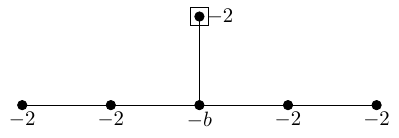}
		\caption{The canonical plumbed graph of $T_{6(b-2)+1}$, and the boxed vertex representing the Wu surface corresponding to the spin structure on it for even $b$.}
		\label{fig:T-1}
	\end{figure}
\end{proof}
On the other hand, the manifolds of type $I_{30(b-2)+17}$ with even $b$, and  $I_{30(b-2)+19}$ and $I_{30(b-2)+23}$ with odd $b$ have vanishing correction terms for spin structures, and any $\mathbf{O}$ type manifolds admit even order first homology group. Hence we cannot apply Proposition \ref{prop:infinite} to them.

\subsection{Finite order obstruction from Donaldson's theorem}
\label{sec:Donaldson_infinite}
The Donaldson obstruction can be also used to give information for the order of rational homology 3-manifolds. If a rational homology 3-sphere $Y$ bounds a negative definite 4-manifold $X$ and has order $n$ in $\rhcg$ then the direct sum of $n$ copies of the intersection form of $X$ is embedded into the standard diagonal lattice of rank $n\cdot b_2(X)$. This condition was sufficient to obstruct sums of lens spaces to have finite order by Lisca \cite{Lisca:2007-2}. For the non-cyclic spherical manifolds, we consider two kinds of definite fillings of them. One is the canonical definite plumbed 4-manifold we have used in Section \ref{sec:I_1}, and the other is the 4-manifold induced from the Dehn surgery descriptions of them, which we will introduce first.

\subsubsection{Donaldson obstruction using Dehn surgery descriptions of spherical manifolds}\label{sec:Donaldson_2}
Let $p>q>0$ be relatively prime integers and $K$ be a knot in $S^3$. Let $\frac{p}{q}$ admit the Hirzebruch-Jung continued fraction $[a_1,a_2,\dots,a_r]$, with $a_i\geq2$, and let $\Gamma_{p,q}$ be the linear graph with weights $a_1,\dots,a_r$ consecutively. By Rolfsen's twist $S^3_{p/q}(K)$ bounds a definite 4-manifold $X$ with intersection form isomorphic to $Q_{\Gamma_{p,q}}$, the incidence form of $\Gamma_{p,q}$; see Figure \ref{fig:Dehn_surgery_filling} for a framed link diagram of $X$. As we introduced in Section \ref{sec:Donaldson_obstruction}, Lisca studied the embedding of sums of intersection lattices $Q_{\Gamma_{p,q}}$ into the standard diagonal lattice of the same rank, provided that $\mathcal{I}(\Gamma_{p,q})<0$. Thus the result of Lisca can be also applied to $S^3_{p/q}(K)$ such that $\mathcal{I}(\Gamma_{p,q})<0$. 

\begin{figure}[tb!]
	\includegraphics[width=0.8\textwidth]{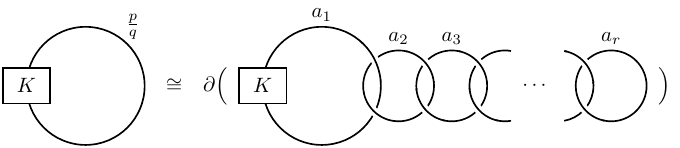}
	\caption{A definite 4-manifold bounded by a knot surgery manifold.}
	\label{fig:Dehn_surgery_filling}
\end{figure}

\begin{proposition}\label{prop:Dehn-surgery}
	Let $p>q>0$ be relatively prime integers such that $\mathcal{I}(\Gamma_{p,q})<0$. Then for any knot $K$ in $S^3$, the order of $S^3_{p/q}(K)$ in $\rhcg$ is greater than or equals to that of $L(p,q)$. 
\end{proposition}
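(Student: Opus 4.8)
The plan is to exploit the fact that both $S^3_{p/q}(K)$ and the lens space $L(p,q)=S^3_{p/q}(U)$ bound definite $4$-manifolds with the \emph{same} intersection form $Q_{\Gamma_{p,q}}$; as is visible from Figure \ref{fig:Dehn_surgery_filling}, the knot $K$ alters only the boundary $3$-manifold, not the incidence form of the surgery filling $X$, since that form records only the framings and the linkings of the chain of unknots produced by Rolfsen's twist. In particular the Donaldson obstruction cannot distinguish $S^3_{p/q}(K)$ from $L(p,q)$ at the level of lattices, and this is the only input we shall need about $K$. Since the asserted inequality is vacuous when $S^3_{p/q}(K)$ has infinite order, I may assume throughout that it has finite order $n$ in $\rhcg$.

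First I would build a negative definite filling of $\#^n S^3_{p/q}(K)$. Taking the boundary connected sum $\natural^n X$ of $n$ copies of $X$ gives a negative definite $4$-manifold with boundary $\#^n S^3_{p/q}(K)$ and intersection form $\oplus^n Q_{\Gamma_{p,q}}$, of rank $nr$ where $r$ is the number of vertices of $\Gamma_{p,q}$. Because $n$ is the order of $S^3_{p/q}(K)$, the manifold $\#^n S^3_{p/q}(K)$ also bounds a rational homology ball $W$, and gluing yields the closed negative definite $4$-manifold $\natural^n X\cup(-W)$. Applying the Donaldson obstruction exactly as in Section \ref{sec:Donaldson_infinite}, I conclude that $\oplus^n Q_{\Gamma_{p,q}}$ embeds into the standard negative definite lattice $(\Z^{nr},\langle-1\rangle^{nr})$. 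The next step is to feed this embedding back into Lisca's analysis of lens spaces: under the hypothesis $\mathcal{I}(\Gamma_{p,q})<0$, Lisca's complete determination of the order of $L(p,q)$ in $\rhcg$ \cite{Lisca:2007-2} shows that the diagonalization obstruction is \emph{sharp} for sums of lens spaces, namely $\#^n L(p,q)$ bounds a rational homology ball if and only if $\oplus^n Q_{\Gamma_{p,q}}$ embeds into the diagonal lattice of the same rank. The embedding just produced therefore forces $\#^n L(p,q)$ itself to bound a rational homology ball, whence the order $\ell$ of $L(p,q)$ divides $n$; in particular $\ell\le n$, which is precisely the desired conclusion.

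The essential point, and the only place where both hypotheses ($\mathcal{I}(\Gamma_{p,q})<0$ and that $L(p,q)$ is a lens space) enter, is the sharpness of the lattice obstruction in its summed form. For a general knot $K$ only the necessary direction, the Donaldson obstruction, is available; this is exactly why the conclusion is a lower bound only, obtained by transporting the lattice embedding from the knot surgery manifold to the corresponding lens space, where the embedding is known to be realized by a genuine rational homology ball. I expect the main obstacle to be the correct invocation of this sharpness: the single-lens-space statement recalled above treats only the case $n=1$, so I must appeal to the full order computation of \cite{Lisca:2007-2}, where the classification of embeddings of the orthogonal sums $\oplus^n Q_{\Gamma_{p,q}}$ into the diagonal lattice is carried out and matched with explicit rational homology ball constructions for the corresponding connected sums of lens spaces.
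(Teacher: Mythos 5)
Your proposal is correct and follows essentially the same route as the paper's proof: apply the Donaldson obstruction to $\natural^n X$ to embed $\oplus^n Q_{\Gamma_{p,q}}$ into the diagonal lattice, then use the fact that in \cite{Lisca:2007-2} this lattice-embedding lower bound is shown to compute the exact order of $L(p,q)$ when $\mathcal{I}(\Gamma_{p,q})<0$. Your write-up simply makes explicit the contrapositive bookkeeping (that the embedding forces $\#^n L(p,q)$ to bound a rational ball, hence the order of $L(p,q)$ divides $n$) that the paper leaves implicit.
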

\begin{proof}
	In Lisca's works \cite{Lisca:2007-1,Lisca:2007-2}, a lower bound for the order of a lens spaces $L(p,q)$ in $\rhcg$ is obtained by the Donaldson obstruction using the definite lattice $\oplus^nQ_{\Gamma_{p,q}}$ bounded by the lens spaces. Therefore, the lower bound is also applicable to the 3-manifolds that bound a 4-manifold with the same intersection lattice $Q_{\Gamma_{p,q}}$. Moreover, for the lens spaces, the bound was turned out to be the exact order in the Lisca's work. 
\end{proof}

For example, consider manifolds of the form $O_{12(b-2)+11}$. By Lemma \ref{lem:Dehn_surgery}, we have the following surgery description of it (see Example \ref{ex:O_11}):
\begin{equation*}
O_{12(b-2)+11}\cong S^3_{\frac{2(12b-13)}{4b-5}}(T).
\end{equation*}
The surgery coefficients have the following continued fraction:
\begin{equation*}
\frac{2(12b-13)}{4b-5}=
\begin{cases}
[8,2,2]&\text{ if } b=2\\
[7,\underbrace{2,\dots,2}_{b-3},3,2,2] &\text{ if } b\geq3.\\
\end{cases}
\end{equation*}
Notice that if $b>5$, then the corresponding linear graph has $\mathcal{I}(\Gamma_{2(12b-13), 4b-5})<0$. 

Similarly, we have
\begin{equation*}
I_{30(b-2)+17}\cong S^3_{\frac{30b-43}{5b-8}}(T) \quad\text{and}\quad I_{30(b-2)+23}\cong S^3_{\frac{30b-37}{5b-7}}(T),
\end{equation*}
and the continued fraction of the surgery coefficients are given as
\begin{equation*}
	\frac{30b-43}{5b-8}=
	\begin{cases}
		[9,2]&\text{ if } b=2\\
		[7,\underbrace{2,\dots,2}_{b-3},4,2] &\text{ if } b\geq3\\
	\end{cases}
\end{equation*}
and
\begin{equation*}
	\frac{30b-37}{5b-7}=
	\begin{cases}
		[8,3]&\text{ if } b=2\\
		[7,\underbrace{2,\dots,2}_{b-3},3,3] &\text{ if } b\geq3.\\
	\end{cases}
\end{equation*}
Hence if $b>7$ in both cases, each linear graph corresponding to the surgery coefficients have $\mathcal{I}(-)<0$.

\begin{corollary}
The following spherical manifolds have infinite order in $\rhcg$.
\begin{itemize}
	\item $O_{12(b-2)+11}$ for any $b>5$
	\item $I_{30(b-2)+17}$ for any $b>7$
	\item $I_{30(b-2)+23}$ for any $b>7$
\end{itemize}	
\end{corollary}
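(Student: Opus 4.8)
The plan is to reduce each of the three cases to the order of the corresponding lens space via Proposition \ref{prop:Dehn-surgery}, and then to invoke Lisca's determination of the order of lens spaces. First I would recall the Dehn surgery descriptions already recorded in Section \ref{sec:Donaldson_2}, namely that $O_{12(b-2)+11}$, $I_{30(b-2)+17}$ and $I_{30(b-2)+23}$ are surgeries along the trefoil $T$ with surgery coefficients whose Hirzebruch--Jung continued fractions are, for $b\geq3$, the linear strings $[7,2,\dots,2,3,2,2]$, $[7,2,\dots,2,4,2]$ and $[7,2,\dots,2,3,3]$ (each with $b-3$ interior entries equal to $2$). A one-line computation of $\mathcal{I}(\Gamma_{p,q})=\sum(|a_i|-3)$ then gives $\mathcal{I}=5-b$ in the octahedral case and $\mathcal{I}=7-b$ in the two icosahedral cases, so that $\mathcal{I}(\Gamma_{p,q})<0$ holds precisely when $b>5$, respectively $b>7$, which are exactly the stated ranges.

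Since $\mathcal{I}(\Gamma_{p,q})<0$ in each case, Proposition \ref{prop:Dehn-surgery} applies directly and shows that the order in $\rhcg$ of each spherical manifold is bounded below by the order of the lens space $L(p,q)$, where $p/q$ is the relevant surgery coefficient. It therefore suffices to prove that each of these three lens spaces has infinite order in $\rhcg$, at which point the corollary is immediate.

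The remaining, and main, step is to establish that $L(p,q)$ has infinite order. By Lisca's classification of the order of lens spaces (as recorded in the $\mathbf{C}$-type row of Table \ref{tab:order}), $L(p,q)$ has finite order if and only if $p/q$ lies in $\mathcal{S}\cup\mathcal{R}\cup\mathcal{F}_2$. I would therefore verify that none of the continued fractions $[7,2,\dots,2,3,2,2]$, $[7,2,\dots,2,4,2]$, $[7,2,\dots,2,3,3]$ belongs to these families, using the explicit recursive descriptions of $\mathcal{R}$, $\mathcal{S}$ and $\mathcal{F}_2$ from \cite{Lisca:2007-1,Lisca:2007-2}. I expect this to be where the real work lies: one must track how the leading entry $7$ interacts with the arbitrarily long block of $2$'s (which drives $\mathcal{I}(\Gamma_{p,q})\to-\infty$ as $b\to\infty$) and argue that no string of this shape can be produced by the generating operations defining Lisca's finite-order families. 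The hard part is thus a structural, uniform-in-$b$ bookkeeping argument about Lisca's families, rather than any single numerical check.

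Equivalently, and as a cross-check, one can bypass the membership analysis and argue infinite order directly through the Donaldson obstruction of Section \ref{sec:Donaldson_infinite}: it is enough to show that the orthogonal sum $\oplus^n Q_{\Gamma_{p,q}}$ fails to embed into the standard negative definite diagonal lattice of the same rank for every $n$, which follows from Lisca's non-embeddability lemmas for sums of linear lattices with $\mathcal{I}(-)<0$. Either route reduces the claim to Lisca's combinatorial analysis, and once infinite order of the lens space is secured the lower bound from Proposition \ref{prop:Dehn-surgery} completes the proof for all three families in the stated ranges of $b$.
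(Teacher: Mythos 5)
Your proposal is correct and follows essentially the same route as the paper: reduce to the lens space order via Proposition \ref{prop:Dehn-surgery} after checking $\mathcal{I}(\Gamma_{p,q})<0$ (your computations $\mathcal{I}=5-b$ and $7-b$ match the stated ranges), and then invoke Lisca's classification, which the paper likewise phrases as checking that these linear lattices and their sums do not appear in Lisca's list of embeddable ones. The membership check in $\mathcal{R}\cup\mathcal{S}\cup\mathcal{F}_2$ and the non-embeddability check are equivalent by Lisca's work, so your two suggested routes coincide with the paper's single deferred verification.
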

\begin{proof}
	This directly follows from the result of \cite{Lisca:2007-1, Lisca:2007-2} and Proposition \ref{prop:Dehn-surgery}. More precisely one check that the incidence forms of linear plumbing graphs corresponding to the surgery slopes of the manifolds listed above are not included in the Lisca's set of linear lattices or sums of them that can be embedded into the standard diagonal one with the same rank. 
\end{proof}
We remark that the remaining spherical manifolds that we have not determined the order, admit the Dehn-surgery descriptions (by Lemma \ref{lem:Dehn_surgery}) whose corresponding linear graphs have positive $\mathcal{I}$ values. 

\subsubsection{Donaldson obstruction using canonical plumbed 4-manifolds}\label{sec:Donaldson_1}
For the manifolds of type $O_{12(b-2)+1}$, $O_{12(b-2)+5}$, and $O_{12(b-2)+7}$, we make use of their canonical plumbed 4-manifolds to apply the Donaldson obstruction and show they have infinite order in $\Theta^3_\Q$. 

\begin{proposition}\label{prop:infinite_order_Donaldson}
	Let $X$ be the canonical negative definite 4-manifold with the boundary $O_{12(b-2)+k}$ for $b\geq2$ and $k=1,5\text{ or }7$. Then the direct sum of $n$-copies of $Q_X$ does not embed into the standard diagonal lattice of rank $n\cdot \text{rk}(Q_X)$ for any $n\geq1$.
\end{proposition}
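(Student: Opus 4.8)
The plan is to translate the statement into a combinatorial embedding problem and settle it by a rigidity analysis of the $-2$-chains in the three plumbing graphs. First I would record the intersection forms: reading off the Hirzebruch--Jung expansions in Table \ref{tab:order}, the canonical negative definite $X$ is in each case the star with central weight $-b$ and three legs, namely $[2],[2,2],[2,2,2]$ for $k=1$, $[2],[3],[2,2,2]$ for $k=5$, and $[2],[2,2],[4]$ for $k=7$. By the discussion in Section \ref{sec:Donaldson_infinite}, if $O_{12(b-2)+k}$ had order $n$ then $\oplus^{n}Q_X$ would embed isometrically into $(\Z^{nr},\langle-1\rangle^{nr})$ via some $\rho$, where $r=\mathrm{rk}(Q_X)$; the goal is a contradiction valid for every $n\ge 1$.

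The engine is the same rigidity already exploited for a single copy in Section \ref{sec:I_1}: a vertex $v$ of square $-2$ forces $\rho(v)=\pm e_i\pm e_j$ with $i\ne j$, since $\sum c_i^2=2$ admits only two unit coordinates. Two such images are orthogonal exactly when their index pairs are disjoint or equal with opposite relative sign, and have product $+1$ exactly when they share one index with matching sign. Hence a maximal $-2$-leg of length $s$ embeds as a \emph{simple} path on $s+1$ distinct indices (a repeated index would close a cycle and create a nonzero product between non-adjacent vertices). I would then prove the decisive \emph{disjointness lemma}: a leg $A$ of length $\ge 2$ shares no basis index with any other leg. Indeed, a shared index $i$ forces the two incident vertices to use a common pair $\{i,j\}$ with opposite signs, and then the neighbor of $A$'s vertex must also use $\{i,j\}$, producing two adjacent vertices of $A$ with the same index pair and product in $\{-2,0,2\}$, which is impossible.

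For $k=1$ this finishes everything at once. The length-$3$ and length-$2$ legs of the $n$ copies give $2n$ pairwise-disjoint legs occupying $n\cdot 4+n\cdot 3=7n$ distinct indices, exactly the ambient rank $nr=7n$. This exhausts all indices, so the $n$ remaining length-$1$ legs, which by the same lemma must occupy fresh pairs disjoint from the two big legs, have nowhere to go; contradiction. I would then set up $k=5,7$ by identical bookkeeping: the length-$3$ (resp. length-$2$) legs pin down $4n$ (resp. $3n$) indices, reducing the problem to fitting the light data of each copy --- the single length-$1$ leg, the one heavy leg-vertex of square $-3$ or $-4$, and the central vertex of square $-b$ --- into the residual $2n$ indices, subject to the incidence relations routed through the central vertex.

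The main obstacle is precisely this residual step for $k=5,7$, where the disjointness lemma no longer applies verbatim because the heavy vertices are not $-2$-vertices: the norm conditions allow $\rho$ of the heavy leg to be $\pm e_a\pm e_b\pm e_c$, or $\pm e_a\pm e_b\pm e_c\pm e_d$ or $\pm2e_a$, and $\rho(c)$ may reuse indices via opposite-sign coincidences. I would control these by combining the norm bounds $\sum c_i^2\in\{3,4,b\}$ with orthogonality of the heavy vertices to the interior of the long $-2$-leg and with the requirement that $\rho(c)$ pair to $+1$ with the initial vertex of each leg, aiming to show that the heavy and central vertices jointly demand more than the $2n$ residual indices. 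As in Lisca's sum-of-lens-space analysis, the subtlety is that indices may be shared between \emph{different} copies, so the contradiction must come from a global count rather than a copy-by-copy one; I expect the cleanest route is to prove that each copy's central vertex is forced to introduce at least one index used by none of the $-2$-legs, which already overruns the budget.
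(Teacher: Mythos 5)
Your overall strategy --- rigidity of the images of the $-2$-chains plus a global index count --- is the same as the paper's, and your disjointness lemma for legs of length $\geq 2$ is correct as far as it goes. But there are two genuine gaps. First, your claim that a $-2$-leg of length $s$ must occupy $s+1$ \emph{distinct} indices is justified incorrectly. For the length-$3$ leg $[2,2,2]$ the ``cyclic'' assignment $\rho(a_1)=e_1-e_2$, $\rho(a_2)=e_2-e_3$, $\rho(a_3)=-e_1-e_2$ satisfies $\rho(a_1)\cdot\rho(a_2)=\rho(a_2)\cdot\rho(a_3)=1$ and $\rho(a_1)\cdot\rho(a_3)=0$, so closing the cycle creates no bad product between non-adjacent vertices of the leg; it is excluded only by playing $\rho(a_3)\equiv\rho(a_1)\pmod 2$ against a vertex that meets $a_1$ but not $a_3$ (the central vertex), which is exactly the step carried out in Section \ref{sec:I_1} and invoked in the paper's proof here. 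This is not cosmetic: if the length-$3$ legs are allowed to recycle indices, your count for $k=1$ drops to $3n+3n=6n$ for the big legs, and the $n$ short legs can pair up on shared index pairs (note $e_1-e_2$ and $e_1+e_2$ are orthogonal), so everything fits into rank $7n$ and the contradiction evaporates. The central vertices are indispensable both to kill the cyclic option and to forbid two short legs from sharing a pair; your argument never uses them.

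Second, for $k=5$ and $k=7$ you have only a plan, and the residual step you defer is where the content of the proof actually lies. The paper resolves it by explicit enumeration: after the $-2$-legs are normalized (already using $6n$, resp.\ $5n$, indices, i.e.\ the full ambient rank), orthogonality to the legs forces the coordinates of the heavy vector to be constant on each leg's index block, and the norm bound $3$ (resp.\ $4$) then leaves only finitely many shapes; each either contains a fresh basis vector $e_{rn+1}$, or (only for $k=7$) is congruent mod $2$ to $v_1^j+v_1^k$ and is killed by pairing against the central vertices $v_4^j$, $v_4^k$. Your proposed route for this step --- showing that each copy's \emph{central} vertex must introduce a fresh index --- is not what happens and in fact fails: for example, with $b=2$ and $k=5$ the central class can be sent to $e_2-e_3$, which meets the short leg and the long leg correctly using no new index at all. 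It is the $-3$/$-4$ vertex that overruns the budget, with the central vertices serving only as parity witnesses. Until that enumeration is carried out, the cases $k=5,7$ remain unproved.
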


\begin{proof}
		We first consider the manifold of the form $O_{12(b-2)+1}$. Let $Q_X$ be the intersection form of the canonical negative definite 4-manifold $X$ with the boundary $O_{12(b-2)+1}$, and let $Q_X^n$ denote the direct sum of $n$ copies of $Q_X$. Suppose that there is an embedding $\rho$ of $Q_X^n$ into the standard lattice $\langle-1\rangle^r$ for some $r>0$. Label the basis vectors of the $i$-th copy of $Q_X^n$ by $v_1^i,\dots,v_7^i$, corresponding to the spheres in the plumbing diagram of $X$ as seen in the top of Figure \ref{fig:Octa}. Let $\{e_1,\cdots,e_r\}$ be the standard basis of $\langle-1\rangle^r$. Then after re-indexing and re-scaling by $\pm 1$, we may assume $\rho(v_1^1)=e_1-e_2$, $\rho(v_2^1)=e_3-e_4$, $\rho(v_3^1)=e_4-e_5$, $\rho(v_4^1)=e_5-e_6$, $\rho(v_6^1)=e_7-e_8$ and $\rho(v_7^1)=e_8-e_9$ for the embedding of the first copy of $Q_X$ as usual. Then there are essentially two choices of the image of $v_1^2$, namely $\rho(v_1^2)=\pm(e_1+e_2)$ or $e_{10}-e_{11}$. However notice that the former case cannot happen since $1=v_1^1\cdot v_5^1\equiv v_1^2\cdot v_5^1$ (mod $2$) assuming this. By using this argument inductively, we may assume that
	\begin{align*} 
		\rho(v_1^i)&=e_{9(i-1)+1}-e_{9(i-1)+2},\\ 
		\rho(v_2^i)&=e_{9(i-1)+3}-e_{9(i-1)+4},\\
		\rho(v_3^i)&=e_{9(i-1)+4}-e_{9(i-1)+5},\\
		\rho(v_4^i)&=e_{9(i-1)+5}-e_{9(i-1)+6}\\
		\rho(v_6^i)&=e_{9(i-1)+7}-e_{9(i-1)+8},\\
		\rho(v_7^i)&=e_{9(i-1)+8}-e_{9(i-1)+9}.
	\end{align*}
	 Then it follows that the rank of the image of $\rho$ is at least $9n$. Hence $Q_X^n$ cannot be embedded into the standard diagonal lattice of the same rank.
	 
	 Now, let $X$ be the canonical definite 4-manifold of $O_{12(b-2)+5}$. Label by $v_1^i,\dots,v_6^i$, basis vectors of each $i$-th copy of $Q_X^n$, as shown in Figure \ref{fig:Octa}. As usual, an embedding $\rho$ of $Q_X^n$ into the standard diagonal lattice $\langle -1\rangle^r$ should have the form,
	\begin{align*}
		\rho(v_1^i)&=e_{6(i-1)+1}-e_{6(i-1)+2},\\ \rho(v_2^i)&=e_{6(i-1)+3}-e_{6(i-1)+4},\\ \rho(v_3^i)&=e_{6(i-1)+4}-e_{6(i-1)+5},\\ \rho(v_4^i)&=e_{6(i-1)+5}-e_{6(i-1)+6}
	\end{align*} 
	for $i=1,\dots,n$, without loss of generality. Notice that there appeared $6n$ basis vectors in the image of $v_1^i\dots v_4^i$. Thus we only need to show that the image of $v_6^1$ contains a basis vector other than $\{e_1,\dots,e_{6n}\}$. In fact, one can easily see that $\rho(v_6^1)$ should be one of the following forms: 
	\[e_{6n+1}\pm (e_{6(j-1)+1}+e_{6(j-1)+2})\quad\text{or}\quad e_{6n+1}+e_{6n+2}+e_{6n+3}\]
	for some $j\in\{1,\dots,n\}$. Hence the rank of the image of $\rho$ is strictly greater than $6n$.

	A similar argument will be applied to $O_{12(b-2)+7}$. We label by $v_1^i,\cdots,v_5^i,\ i=1,\cdots,n$, the basis vectors of $Q_X^n$ for the plumbed manifold $X$ of $O_{12(b-2)+7}$ as in Figure \ref{fig:Octa}. By the same argument of the previous cases, an embedding $\rho$ of $Q_X^n$ into $\langle -1\rangle^r$ should have the form: 
	\begin{align*}
	\rho(v_1^i)&=e_{5(i-1)+1}-e_{5(i-1)+2}\\
	\rho(v_2^i)&=e_{5(i-1)+3}-e_{5(i-1)+4}\\
	\rho(v_3^i)&=e_{5(i-1)+4}-e_{5(i-1)+5}\\
	\end{align*}
	for $i=1,\cdots,n$. Then one can check that the following are the only choices for $\rho(v_5^1)$:
	\begin{gather*}
	e_{5n+1}+e_{5n+2}+e_{5n+3}+e_{5n+4},\\ 
	e_{5n+1}+e_{5n+2}\pm (e_{5(j-1)+1}+e_{5(j-1)+2}),\\ e_{5n+1}\pm(e_{5(j-1)+3}+e_{5(j-1)+4}+e_{5(j-1)+5}),\text{ or }\\ 
	\pm(e_{5(j-1)+1}+e_{5(j-1)+2})\pm (e_{5(k-1)+1}+e_{5(k-1)+2})
	\end{gather*}
	for some $j, k\in\{1,\cdots,n\}$ such that $j\neq k$. If we admit one of the first $3$ cases, then the rank of the image of $\rho$ is greater than $5n$. If we assume the last case, then $v_5^1\cdot w\equiv (v_1^j+v_1^k)\cdot w$ modulo $2$ for any $w$ in $Q_X^n$. Since $(v_1^j+v_1^k)\cdot v_4^j=(v_1^j+v_1^k)\cdot v_4^k=1$, we have $v_5^1\cdot v_4^j\neq 0$ and $v_5^1\cdot v_4^k\neq 0$, which induces a contradiction.
\end{proof}

\begin{figure}[t!]
	\includegraphics{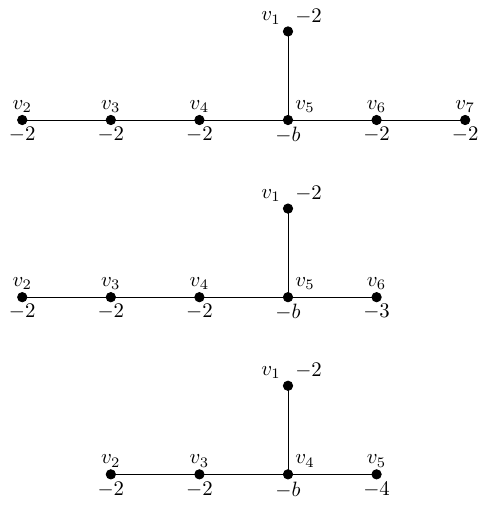}
	\caption{The plumbing graphs for the canonical definite 4-manifolds bounded by $O_{12(b-2)+1}$, $O_{12(b-2)+5}$ and $O_{12(b-2)+7}$, respectively.}
	\label{fig:Octa}
\end{figure}

\subsection{Greene-Jabuka technique}\label{sec:Greene-Jabuka}
The manifolds of the form $I_{30(b-2)+19}$ with odd $b$ is a family of manifolds to which the previous finite order obstructions cannot be applied. The correction terms for spin structures on them vanish, and the intersection lattice of the canonical plumbed manifold of them can be embedded into the standard diagonal one of the same rank as we observed in Section \ref{sec:I_19}. However, a technique of Greene and Jabuka in \cite{Greene-Jabuka:2011-1} turns out to be useful for showing $I_{30(b-2)+19}$ with $b>5$ has infinite order in $\rhcg$. Their main idea is to combine the obstructions from Donaldson's theorem and Heegaard Floer corrections terms for the boundary 3-manifolds of plumbed 4-manifolds. We first recall their theorem.

\begin{theorem}[{\cite[Theorem 3.6]{Greene-Jabuka:2011-1}}]
	\label{thm:Greene-Jabuka}
	Let $G$ be a graph that is a forest of negative definite trees with at most two overweighted vertices. Let $Y$ be the boundary 3-manifold of the plumbed 4-manifold corresponding to $G$. If $Y$ bounds a rational homology ball, then there is a square matrix $A$ such that it factors the incident matrix of $G$, also denoted by $G$, as $G=-AA^T$ and every element of $\coker(A^T)$ has a representative in $\{\pm1\}^n$, where $n$ is the rank of $G$.
\end{theorem}

We remark that the factorization property of the incident matrix $G$ as $G=-AA^T$ is equivalent to the embedding condition of the intersection form of the plumbed 4-manifold $W$ into $\langle-1\rangle^n$. More precisely, if there exists an embedding $\rho$ from $Q_{W}$ into $\langle-1\rangle^n$, let $A^T$ be the matrix representing $\rho$ with respect to the basis of $Q_{W}$ by the spheres corresponding to vertices of $G$ and the standard basis of $\langle-1\rangle^n$. Then one can explicitly check that $A$ satisfies $G=-AA^T$, and vice versa.

On the other hand, the property on the cokernel of $A^T$ comes from the obstruction by Heegaard Floer correction terms and the computation of correction terms for the boundary of a plumbed 4-manifold by Ozsv\'ath and Szab\'o in \cite{Ozsvath-Szabo:2003-1}. In fact a class in $\coker(A^T)$ corresponds to a spin$^c$ structure on $Y$ that extends to $W$, and in order for the corrections term of the spin$^c$ structures to vanish the class contains a representative in $\{\pm1\}^n$. 

We now apply Greene and Jabuka's theorem to show the manifold $I_{30(b-2)+19}$ with $b>5$ has infinite order in $\rhcg$.

\begin{corollary}
	For $b>5$, $I_{30(b-2)+19}$ has infinite order in $\rhcg$.
\end{corollary}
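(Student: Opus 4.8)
The plan is to argue by contradiction, assuming that $Y:=\#^n I_{30(b-2)+19}$ bounds a rational homology $4$-ball $W$ for some $n\geq 1$, and to play the lattice-embedding data of Lemma \ref{lem:lattice-emb} against the Greene--Jabuka count of Proposition \ref{Prop:Greene-Jabuka}. Write $P=30(b-2)+19$, so that $|H_1(Y;\Z)|=P^n$; note $P$ is odd and, since $P\equiv 1\pmod 6$, coprime to $6$. If $W$ exists then $P^n=m^2$ is a square with $m=P^{n/2}$, and exactly $m$ spin$^c$ structures extend over $W$, each of which has vanishing correction term by the correction term obstruction. The boundary connected sum $\natural^n X$ of the canonical plumbings has intersection form $Q^n$, and since its central vertices have weight $-b$ and degree $3$ with $b>5$, it has no bad vertices; thus Proposition \ref{Prop:Greene-Jabuka} applies. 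Taking $A=\rho$ to be the Donaldson embedding of $Q^n$ into $(\Z^{5n},\langle -1\rangle^{5n})$, this gives
\[ m \;\le\; \#\{\mathfrak{t} : d(Y,\mathfrak{t})=0\} \;\le\; N:=\bigl|\{(\pm 1,\dots,\pm 1)\in\Z^{5n}\}/\!\sim\bigr|, \]
so it suffices to prove $N<m$.

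First I would pass to the finite quotient $G:=\Z^{5n}/\Lambda$, where $\Lambda$ is the lattice spanned by the columns of $A$; since $\rho$ is a full-rank isometric embedding, $|G|=\sqrt{|\det Q^n|}=P^{n/2}=m$, and $N$ is exactly the number of distinct images in $G$ of the vectors $(\pm 1,\dots,\pm 1)$. Next I would read off relations in $G$ from Lemma \ref{lem:lattice-emb}: writing the $i$-th block of coordinates as $(a_i,b_i,c_i,d_i,f_i)$, the columns $\rho(v_1^i),\rho(v_2^i),\rho(v_3^i)$ force $[a_i]=[b_i]=:x_i$ and $[c_i]=[d_i]=[f_i]=:y_i$, while $\rho(v_4^i)$ yields $2x_{\sigma_1(i)}=\pm 3\,y_{\sigma_2(i)}$. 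This is where the coprimality enters: because $|G|=m$ is prime to $6$, multiplication by $2$ and by $3$ are automorphisms of $G$, so each $x_j$ equals $\pm\tfrac{3}{2}\,y_{\tau(j)}$ for a suitable permutation $\tau$, and hence $G=\langle y_1,\dots,y_n\rangle$. Crucially, I never need the unknown images $\rho(v_5^i)$; they only serve to cut $G$ down to order $m$, a fact already in hand.

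With this presentation, the image of a vector $(\pm 1,\dots,\pm 1)$ in $G$ becomes $\sum_j c_j\,y_j$, where the coefficient $c_j$ is a sum of a term $\tfrac12(\epsilon_a+\epsilon_b)\cdot(\pm 3)\in\{-3,0,3\}$ coming from a length-$2$ block and a term $\epsilon_c+\epsilon_d+\epsilon_f\in\{-3,-1,1,3\}$ coming from a length-$3$ block. Thus each $c_j$ lies in the $11$-element set $C=\{-6,-4,-3,-2,-1,0,1,2,3,4,6\}$, so the number of distinct images satisfies $N\le |C|^n=11^n$. On the other hand $m=P^{n/2}=(\sqrt P)^{\,n}$, and for $b>5$ one has $P\ge 139>121$, whence $\sqrt P>11$ and $m>11^n\ge N$. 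This contradicts $m\le N$, so no connected sum $\#^n I_{30(b-2)+19}$ can bound a rational homology ball, and $I_{30(b-2)+19}$ has infinite order in $\rhcg$.

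The main obstacle is the middle step: extracting from Lemma \ref{lem:lattice-emb} the clean presentation $G=\langle y_1,\dots,y_n\rangle$ and verifying that every $\pm 1$-vector maps to a coefficient vector drawn from the fixed set $C$. The two essential inputs are the invertibility of $2$ and $3$ in $G$ (which collapses the $x_j$ onto the $y_j$ and lets me discard the $\rho(v_5^i)$ altogether) and the parity dichotomy that a length-$2$ block contributes an even coefficient while a length-$3$ block contributes an odd one. Once $N\le 11^n$ is established, the remaining content is purely the numerical inequality $\sqrt{P}>11$, which is precisely the hypothesis $b>5$; the same argument is uniform in the parity of $b$, since $P$ is odd for all $b$.
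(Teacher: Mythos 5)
Your proof is correct and follows essentially the same route as the paper: Lemma \ref{lem:lattice-emb} feeds into the Greene--Jabuka count of Proposition \ref{Prop:Greene-Jabuka} to give the upper bound $11^n$ on classes with vanishing correction term, which is then played against the $\sqrt{(30(b-2)+19)^n}$ extendable spin$^c$ structures to force $30(b-2)+19\le 121$. The only cosmetic difference is that you count classes in the quotient group $\Z^{5n}/\Lambda$ via the invertibility of $2$ and $3$, whereas the paper exhibits the explicit linear functional $l(v)=3(x_1+x_2)\pm 2(x_3+x_4+x_5)$ whose kernel is the column span of the block $B$ and counts its $11$ values on the $(\pm1,\dots,\pm1)$-vectors; these are dual descriptions of the same computation.
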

\begin{proof}
	Fix $b\geq2$. Let $Y$ be the manifold $I_{30(b-2)+19}$. Note that $Y$ bounds the canonical negative definite plumbed manifold $W$ corresponding to the graph $G$ in Figure \ref{fig:I_19}. Let $Q$ be the intersection form of $W$ and $\mathcal{Q}$ denote the direct sum of $n$-copies of $Q$. We also denote basis vectors of the $i$-th copy of $Q$ in $\mathcal{Q}$ by $v_1^i, v_2^i,\dots, v_5^i$ following the assignment in Figure \ref{fig:I_19}. Assume that $Y$ is of order $n$ in $\rhcg$.
	
	We first examine necessary conditions on $\mathcal{Q}$. By Donaldson's theorem, there is an embedding $\rho$ from $\mathcal{Q}$ into $\langle -1 \rangle^{5n}$. Let $\{e_1^i,\dots e_{5}^i\}_{i=1}^n$ be the standard basis of $\langle -1 \rangle^{5n}$. Then by the same argument of Proposition \ref{prop:infinite_order_Donaldson}, the images of $\rho$ for $v_1^i$, $v_2^i$ and $v_3^i$ have the following form:
	\[
		\rho(v_1^i)=e_{1}^{i}-e_{2}^{i}, \quad \rho(v_2^i )= e_{3}^{i}-e_{4}^{i}, \quad \rho(v_3^i )= e_{4}^{i}-e_{5}^{i}
	\]
	for $i=1,\dots,n$, up to re-indexing and re-scaling the basis of $\langle -1 \rangle^{5n}$. Note that all $5n$ vectors in the standard basis have already appeared in the image of $v_1^i, v_2^i$ and $v_3^i$. Since $v_4^i\cdot v_4^i=-5$ and $v_4^i\cdot v_k^j=0$ for $k=1,2,3$ and $j=1,\dots,n$, the image of $v_4^i$ has the following form
	\[
	\rho(v_4^i ) = \pm(e_{1}^{\sigma_1(i)}+e_{2}^{\sigma_1(i)})\pm(e_{3}^{\sigma_2(i)}+e_{4}^{\sigma_2(i)}+e_{5}^{\sigma_2(i)})
	\]
	for $i=1,\dots,n$ and some permutations $\sigma_1$ and $\sigma_2$ in $\{1,\dots,n\}$. By re-indexing and re-scaling the basis vectors as $-e_1^i\rightarrow e_2^i$ and $-e_2^i\rightarrow e_1^i$ if necessary, we can further assume that 
	\[
	\rho(v_4^i ) = (e_{1}^{\sigma_1(i)}+e_{2}^{\sigma_1(i)})\pm(e_{3}^{\sigma_2(i)}+e_{4}^{\sigma_2(i)}+e_{5}^{\sigma_2(i)}).
	\] 
	
	Let us re-index the basis vectors of $\mathcal{Q}$ as $w_1^i=v_1^{\sigma_1(i)}$, $w_2^i=v_2^{\sigma_2(i)}$ and  $w_3^i=v_3^{\sigma_2(i)}$, and the basis vectors of $\langle -1 \rangle^{5n}$ as $f_{r}^i=e_{r}^{\sigma_1(i)}$ for $r=1,2$ and $f_{s}^{i}=e_{s}^{\sigma_2(i)}$ for $s=3,4,5$. Then a matrix $A^T$ that represents $\rho$ with respect to the basis $\{w_1^i, w_2^i, w_3^i, v_4^i\}_{i=1}^n\cup\{v_5^i\}_{i=1}^n$ and $\{f_1^i,f_2^i,\cdots,f_{5}^i\}_{i=1}^n$ is given as
	\[A^T=
	\left[ 
	\begin{array}{ccccc}
		B&0&0&0&*\\
		0&B&0&0&*\\
		0&0&\ddots&0&*\\
		0&0&0&B&*\\
	\end{array}
	\right]_{\textstyle ,} 
	\]
	where $B$ is a block matrix of the form either 
	\[
	\left[ 
	\begin{array}{cccc}
		1 & 0 & 0&1\\
		-1 &0&0&1 \\
		0&1&0&1\\
		0&-1&1&1 \\
		0&0&-1&1\\
	\end{array}
	\right] 
	\quad\text{or}\quad
	\left[ 
	\begin{array}{ccccc}
		1 & 0& 0&1 \\
		-1&0&0&1 \\
		0&1&0&-1 \\
		0&-1&1&-1\\
		0&0&-1&-1 
	\end{array}
	\right]_{\textstyle.}
	\]
	Note that the last $n$ columns of $A^T$, denoted by $*$, correspond to the image of $\{v_5^i\}_{i=1}^n$. By the remark after Theorem \ref{thm:Greene-Jabuka} a square matrix $A$ that factors $G$ as $G=AA^T$ can be assumed to have the above form, without loss of generality, since we shall examine the cokernel of $A^T$.
	
	Now we estimate the number of classes in $\coker(A^T)$ that have a representative in $\{\pm1\}^n$. Let $V\vcentcolon=\{(\pm1,\cdots,\pm1)\in \Z^{5n}\}$ and define a relation $\sim$ on $V$ such that $v \sim w$ if and only if $v-w$ is a linear combination over $\Z$ of the first $4n$ column vectors of $A^T$. Define $W\vcentcolon=\{(\pm1,\cdots,\pm1)\in \Z^{5}\}$ and a relation $\sim$ on $W$ such that $v \sim w $ if and only if $v-w$ is a linear combination over $\Z$ of the column vectors of $B$. Then since the initial $4n$ columns of $A^T$ has the  block matrix form by $B$, $|V/{\sim}|=|W/{\sim}|^n$. Let $l\colon\Z^5\rightarrow\Z$ be the function defined as 
	\[l(v)=3(x_1+x_2)\pm 2(x_3+x_4+x_5)\]
	for $v=(x_1,x_2,x_3,x_4,x_5)\in\Z^5$, then $\text{ker}(l)$ is exactly the subspace generated by the column vectors of $B$ ($\pm$ sign of $l$ follows from the choices of $B$). One can explicitly compute that
	\[
	\text{Im}(l|_{W})=\{-12,-8,-6,-4,-2,0,2,4,6,8,12\}.
	\]
	Hence $|W/{\sim}|=|\text{Im}(l|_{W})|=11$ and so $|V/{\sim}|=11^n$. Therefore, there are at most $11^n$ classes in $\coker(A^T)$ that have a representative in $V$. Hence the number of extendable spin$^c$ structures on $\#^nY$ that admit vanishing correction terms is also bounded above by $11^n$. One the other hand, by the correction term obstruction, if $Y$ is of order $n$ in $\rhcg$, there are at least $\sqrt{(30(b-2)+19)^n}$ spin$^c$ structures on $\#^n Y$ with vanishing correction terms. Therefore, if $Y$ has the order $n$ in $\rhcg$, then $b\leq 5$.
\end{proof}

\subsection{Correction terms revisited}\label{sec:d_revisited}
In this section, we finally discuss the following $11$ manifolds whose order has not been determined so far.
\begin{equation*}
	\begin{cases}
		O_{12(b-2)+11}&\text{with}\quad b=2,3,4,5,\\
		I_{30(b-2)+17} &\text{with}\quad b=2,4, 6,\\
		I_{30(b-2)+19} &\text{with}\quad b=5,\\
		I_{30(b-2)+23}&\text{with}\quad b=3,5,7.\\
	\end{cases}
\end{equation*}
We claim that these manifolds also have infinite order in $\rhcg$. The main ingredients of our proof are the correction term obstruction together with the study of metabolizer subgroups in products of finite abelian groups by Kim and Livingston in \cite{Kim-Livingston:2014-1}. We also take advantage of that it is possible to compute all the correction terms explicitly since we only have finitely many manifolds at hand. We remark that the generalizations of our finite order obstructions, Proposition \ref{prop:d_sum} and \ref{prop:I_77}, can be found in  \cite{Grigsby-Ruberman-Strle:2008-1}.

\begin{proposition}\label{prop:d_sum}
	Let $Y$ be a rational homology 3-sphere with $|H_1(Y;\Z)|$  square free. Then if $Y$ has finite order in $\rhcg$, then
	\begin{equation*}
	\sum_{\mathfrak{s}\in Spin^c(Y)}d(Y,\mathfrak{s})=0.
	\end{equation*}
\end{proposition}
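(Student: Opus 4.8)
The plan is to deduce the statement from the Correction term obstruction applied to a suitable connected sum, exploiting the product structure of the resulting metabolizer together with the squarefree hypothesis. Throughout, write $G=H^2(Y;\Z)\cong H_1(Y;\Z)$, so that $|G|=N:=|H_1(Y;\Z)|$ is squarefree, and abbreviate $D(Y)=\sum_{\mathfrak s\in\spinc(Y)}d(Y,\mathfrak s)$, the quantity to be shown to vanish. If $Y$ has finite order $n$ in $\rhcg$, then $Z:=\#^nY$ bounds a rational homology $4$-ball. Note that $\spinc(Z)$ is an affine space over $H^2(Z;\Z)\cong G^n$ and that $d$ is additive under connected sum, so every correction term of $Z$ is a sum of $n$ correction terms of $Y$.

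First I would apply the Correction term obstruction to $Z$, producing a base structure $\mathfrak s_0=(\mathfrak s_0^{(1)},\dots,\mathfrak s_0^{(n)})\in\spinc(Z)$ and a metabolizer $\mathcal M\subseteq G^n$ of the linking form of $Z$ with $d(Z,\mathfrak s_0+\alpha)=0$ for all $\alpha\in\mathcal M$. Summing this vanishing over $\alpha=(\alpha_1,\dots,\alpha_n)\in\mathcal M$, expanding by additivity, and interchanging the two sums yields
\begin{equation*}
0=\sum_{\alpha\in\mathcal M}d(Z,\mathfrak s_0+\alpha)=\sum_{j=1}^{n}\;\sum_{\alpha\in\mathcal M}d\bigl(Y,\mathfrak s_0^{(j)}+\alpha_j\bigr).
\end{equation*}
For fixed $j$, the inner sum is organized by the coordinate projection $\pi_j\colon\mathcal M\to G$, $\alpha\mapsto\alpha_j$: since $\pi_j$ is a homomorphism, each value in its image $\pi_j(\mathcal M)$ occurs exactly $|\mathcal M|/|\pi_j(\mathcal M)|$ times, so the inner sum equals $\tfrac{|\mathcal M|}{|\pi_j(\mathcal M)|}\sum_{g\in\pi_j(\mathcal M)}d(Y,\mathfrak s_0^{(j)}+g)$.

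The crux of the argument, and the step I expect to require the most care, is to prove that every projection $\pi_j$ is \emph{surjective}; this is exactly where the squarefree hypothesis is used and where I would appeal to the study of metabolizers of linking forms on products of finite abelian groups by Kim and Livingston \cite{Kim-Livingston:2014-1}. Because the linking form of $Y$ is orthogonal across the primary decomposition of $G$, the metabolizer $\mathcal M$ splits as $\bigoplus_{p\mid N}\mathcal M_p$, where $\mathcal M_p$ is a metabolizer of the induced form on the $p$-part $(\Z/p)^n$. As $N$ is squarefree, each $p$-part is the $\mathbb F_p$-vector space $\mathbb F_p^n$ carrying a nonsingular form obtained by scaling the standard one by a unit, and $\mathcal M_p$ is a totally isotropic subspace with $\dim_{\mathbb F_p}\mathcal M_p=n/2$. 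If $\pi_j(\mathcal M_p)$ were trivial for some $j$, then $\mathcal M_p$ would lie in the hyperplane $\{x_j=0\}\cong\mathbb F_p^{n-1}$ and be a totally isotropic subspace of dimension $n/2$ there, exceeding the maximal isotropic dimension $\lfloor(n-1)/2\rfloor=(n-2)/2$ of a nonsingular form on the odd-dimensional space $\mathbb F_p^{n-1}$. Hence $\pi_j(\mathcal M_p)=\mathbb F_p$ for every $p$ and $j$, and by the Chinese remainder theorem $\pi_j(\mathcal M)=G$.

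With surjectivity established, the inner sum above becomes $\tfrac{|\mathcal M|}{|G|}\sum_{g\in G}d(Y,\mathfrak s_0^{(j)}+g)=\tfrac{|\mathcal M|}{|G|}D(Y)$, which is independent of the base point $\mathfrak s_0^{(j)}$ since $g$ ranges over all of $G$. Substituting into the displayed identity gives $0=n\cdot\tfrac{|\mathcal M|}{|G|}\,D(Y)=n\,N^{n/2-1}D(Y)$, and since the coefficient is positive we conclude $D(Y)=0$. The trivial case $N=1$, where $Y$ is an integral homology sphere, follows directly from $d(\#^nY)=n\,d(Y)=0$. I would also point out that the squarefree assumption is essential to the dimension count: it guarantees that each factor contributes a single dimension to every $p$-part, so that the kernel of $\pi_j$ is a hyperplane and the isotropic bound is decisive.
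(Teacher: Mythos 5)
Your proof is correct and follows essentially the same route as the paper: apply the correction term obstruction to $\#^nY$, sum the vanishing $d$-invariants over the metabolizer, interchange the sums, and use surjectivity of the coordinate projections of the metabolizer, which is exactly where the squarefree hypothesis enters. The only difference is that you prove this surjectivity directly by splitting the metabolizer into its $p$-primary parts and bounding the dimension of a totally isotropic subspace of a nondegenerate form on $\mathbb{F}_p^{n-1}$, whereas the paper simply cites Corollary 3 of Kim--Livingston \cite{Kim-Livingston:2014-1} for this fact; your dimension count is valid (note that $n$ is automatically even when $N>1$, since $N^n$ must be a perfect square), so this is a sound, self-contained substitute for the citation.
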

\begin{proof}
	Let $Y$ be a rational homology 3-sphere with square free $m=|H_1(Y;\Z)|$. Suppose $\#^{2k}Y$ bounds a rational homology 4-ball. Then there exists a spin$^c$ structure $\mathfrak{t}$ on $\#^{2k}Y$ and a metabolizer $\mathcal{M}$ in $H^2(\#^{2k}Y;\Z)\cong\oplus_{i=1}^{2k}H^2(Y_i;\Z)\cong(\Z_m)^{2k}$ such that \[d(\#^{2k}Y,\mathfrak{t}+\alpha)=0\] for each $\alpha\in \mathcal{M}$. Note that since $m$ is square free, the projection map from the metabolizer $\mathcal{M}$ to each summand $H^2(Y_i;\Z)\cong\Z_m$ is a surjective homomorphism by \cite[Corollary 3]{Kim-Livingston:2014-1}. Let $\alpha_i$ denote the restriction of $\alpha\in \mathcal{M}$ on each $H^2(Y_i;\Z)$. Then by summing all the correction terms for spin$^c$ structures extending to the rational ball, we have
	\begin{align*}
		0&=\sum_{\alpha\in \mathcal{M}}d(\#^{2k}Y,\mathfrak{t}+\alpha)=\sum_{\alpha\in \mathcal{M}}\sum_{i=1}^{2k}d(Y_i,\mathfrak{t}|_{Y_i}+\alpha_i)=\sum_{i=1}^{2k}\sum_{\alpha\in \mathcal{M}}d(Y_i,\mathfrak{t}|_{Y_i}+\alpha_i)\\
		&=\sum_{i=1}^{2k}m^{k-1}\sum_{\mathfrak{s}\in Spin^c(Y_i)}d(Y_i,\mathfrak{s})=2km^{k-1}\sum_{\mathfrak{s}\in Spin^c(Y_i)}d(Y_i,\mathfrak{s}).\\
	\end{align*}
\end{proof}
\begin{corollary}
	The following manifolds have infinite order in $\rhcg$:
	\begin{itemize}
		\item $O_{12(b-2)+11}$ for $b=2,3,4,5$,
		\item $I_{30(b-2)+17}$ for $b=2,6$,
		\item $I_{30(b-2)+19}$ for $b=5$,
		\item $I_{30(b-2)+23}$ for $b=3,5,7$.
	\end{itemize}
\end{corollary}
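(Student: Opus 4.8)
The plan is to obtain all eleven cases from Proposition~\ref{prop:d_sum}. First I would record the orders of the first homology groups and check that each is square-free. From $|H_1(O_{12(b-2)+11};\Z)|=2(12b-13)$ and $|H_1(I_{30(b-2)+k};\Z)|=30(b-2)+k$ one obtains the values $22,46,70,94$ for the four octahedral manifolds ($b=2,3,4,5$), the values $17,137$ for $I_{30(b-2)+17}$ ($b=2,6$), the value $109$ for $I_{30(b-2)+19}$ ($b=5$), and the values $53,113,173$ for $I_{30(b-2)+23}$ ($b=3,5,7$). Each of these is square-free, so Proposition~\ref{prop:d_sum} applies to every one of them. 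Consequently it suffices to prove that
\[
\sum_{\mathfrak{s}\in\spinc(Y)}d(Y,\mathfrak{s})\neq0
\]
for each manifold $Y$ in the list, since a nonzero total contradicts finite order in $\rhcg$.

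Next I would compute these sums from the Dehn surgery descriptions. By Lemma~\ref{lem:Dehn_surgery} and the descriptions collected in Section~\ref{sec:Donaldson_2}, each manifold is, up to orientation, of the form $S^3_{p/q}(T)$ with $p=|H_1(Y;\Z)|>q>0$ and $T$ the trefoil. Because $V_0(T)=1$ and $V_s(T)=0$ for $s>0$, and because $\lfloor(p+q-1-i)/q\rfloor\geq1$ for every $0\leq i\leq p-1$ (the minimum being attained at $i=p-1$, where it equals $1$), the Ni--Wu formula~(\ref{eq:Ni-Wu}) collapses after summation to
\[
\sum_{i=0}^{p-1}d(S^3_{p/q}(T),i)=\sum_{i=0}^{p-1}d(L(p,q),i)-2q,
\]
where the correction $-2q$ arises precisely from the $q$ indices $i\in\{0,\dots,q-1\}$ with $\lfloor i/q\rfloor=0$. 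The remaining lens-space sum $\sum_i d(L(p,q),i)$ is then evaluated directly from the Jabuka--Robins--Wang formula~(\ref{eq:J-R-W}), which is convenient for machine computation, as noted in the Remark following Proposition~\ref{prop:Dehn_surgery}.

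Finally I would carry out this arithmetic for each of the eleven (finitely many) manifolds and exhibit a nonzero total in every case, completing the proof. Orientation is immaterial here: reversing orientation negates every $d$-invariant and hence the whole sum, so a nonzero value for either orientation suffices; this in particular handles $I_{30(b-2)+19}$, whose natural surgery slope is negative and which is treated as $-S^3_{p/q}(T)$. Since the conceptual work has already been done in Proposition~\ref{prop:d_sum}, no new idea is required, and the main obstacle is simply the correct bookkeeping of the $-2q$ correction, so that a genuinely nonvanishing lens-space sum is not accidentally cancelled; beyond this the verification is a routine, best computer-assisted, finite check.
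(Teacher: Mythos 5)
Your proposal is correct and follows essentially the same route as the paper: verify that each of the eleven manifolds has square-free $|H_1|$, invoke Proposition \ref{prop:d_sum}, and check by direct computation with the Ni--Wu formula (\ref{eq:Ni-Wu}) that the total sum of correction terms is nonzero. Your extra bookkeeping (the $-2q$ contribution from $V_0(T)=1$ and the orientation remark for $I_{30(b-2)+19}$) is a correct elaboration of the computation the paper leaves implicit.
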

\begin{proof}
	Observe that each manifold listed above admits square free order of $H_1$. Then by a direct computation of correction terms using the formula (\ref{eq:Ni-Wu}) of Ni and Wu, we find each manifold admits a nonzero sum of correction terms over all spin$^c$ structures on it. Hence they have infinite order in $\rhcg$ by Proposition \ref{prop:d_sum}.
\end{proof}

Now, we are left with only one manifold, $I_{77}$. Notice that the sum of all correction terms on $I_{77}$ is zero. The order of $I_{77}$ is answered by the following finite order obstruction. 

\begin{proposition}\label{prop:I_77}
	Let $Y$ be a rational homology 3-sphere with $|H_1(Y;\Z)|=\Z_{p}\oplus\Z_{q}$, where $p$ is a positive prime, $(p,q)=1$, and $pq$ is odd. Let $\mathfrak{s}_0$ be the unique spin$^c$ structure whose first Chern class is trivial. Consider the subset in $Spin^c(Y)$ defined as \[A=\{\mathfrak{s}_0+n\cdot q\mid n=0,\dots, p-1\}.\] Then, if $Y$ has finite order in $\rhcg$, then \[\max_{\mathfrak{s}\in A}\{d(Y,\mathfrak{s})\}+\min_{\mathfrak{s}\in A}\{d(Y,\mathfrak{s})\}=0.\]
\end{proposition}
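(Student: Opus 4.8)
The plan is to run the metabolizer argument underlying Theorem \ref{thm:OS}, but to squeeze out of it a finer \emph{extremal} symmetry rather than merely a vanishing sum. Suppose $Y$ has finite order, so that $\#^{N}Y$ bounds a rational homology $4$-ball $W$ for some $N$. A metabolizer $\mathcal{M}\subset H^2(\#^N Y;\Z)$ has order $\sqrt{|H_1(\#^N Y;\Z)|}=(pq)^{N/2}$, and since $pq$ is square-free this forces $N$ to be even, say $N=2k$. Because $|\mathcal{M}|=(pq)^{k}$ is odd and the set of spin$^c$ structures extending over $W$ is preserved by conjugation, the fixed-point argument used in Proposition \ref{prop:infinite} shows that the unique spin structure $\#^{2k}\mathfrak{s}_0$ extends over $W$; hence I may take it as the base point and record that $d(\#^{2k}Y,\#^{2k}\mathfrak{s}_0+\alpha)=0$ for every $\alpha\in\mathcal{M}$.

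First I would localize at the prime $p$. As $(p,q)=1$, the linking form of $Y$ splits orthogonally into its $p$- and $q$-primary parts, so on $\#^{2k}Y$ one has $H^2\cong(\Z_p)^{2k}\oplus(\Z_q)^{2k}$ with a correspondingly split linking form, and the metabolizer decomposes as $\mathcal{M}=\mathcal{M}_p\oplus\mathcal{M}_q$ (each primary component of an element is an integral multiple of it, hence again in $\mathcal{M}$). Restricting to $\alpha\in\mathcal{M}_p$ annihilates all $q$-components, so each restricted spin$^c$ structure on the $i$-th summand lies in the set $A$, and additivity of correction terms gives $\sum_{i=1}^{2k} d(Y,\mathfrak{s}_0+\alpha_i)=0$ with each $\mathfrak{s}_0+\alpha_i\in A$. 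Writing $f(v):=d(Y,\mathfrak{s}_0+v)$ for $v$ in the order-$p$ subgroup generated by $q$, this is the single clean constraint $\sum_i f(\alpha_i)=0$ for all $\alpha\in\mathcal{M}_p$.

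Next I would exploit the structure of $\mathcal{M}_p$ as a metabolizer of $\lambda^{\oplus 2k}$ localized at the field $\Z_p$. Since $p$ is prime, \cite[Corollary 3]{Kim-Livingston:2014-1} guarantees that each coordinate projection $\pi_i\colon\mathcal{M}_p\to\Z_p$ is onto; summing $\sum_i f(\alpha_i)=0$ over all $\alpha\in\mathcal{M}_p$ and using that each $\alpha_i$ then equidistributes over $\Z_p$ yields $\sum_{\mathfrak{s}\in A}d(Y,\mathfrak{s})=0$. Averaging the same constraint instead over the coset $\{\alpha\in\mathcal{M}_p:\alpha_1=v\}$ and invoking this vanishing sum to discard the ``free'' coordinates leaves a sharpened identity of the shape $f(v)+\sum_{i\in J}f(c_i v)=0$ for all $v$, where $c_i\in\Z_p^{\times}$ and $J$ records the coordinates slaved to the first. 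Combined with the conjugation symmetry $f(v)=f(-v)$, the aim is to deduce that the \emph{value set} $\{f(v)\}$ is symmetric about $0$: once $-M$ and $-\mu$ are both realized as correction terms on $A$ (with $M=\max_{\mathfrak{s}\in A} d$ and $\mu=\min_{\mathfrak{s}\in A} d$), the inequalities $\mu\le -M$ and $M\ge -\mu$ force $M+\mu=0$.

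The hard part will be exactly this last step when the order is large. When $k=1$ the sharpened identity degenerates to a two-term relation $f(v)=-f(cv)$, so every attained value has its negative attained and the symmetry is automatic; this already disposes of order two. For $k\ge 2$ one must control which full-support self-dual codes over $\Z_p$ can occur and cut the slave set $J$ down to a single index (equivalently, produce a genuine two-term relation) so that the same conclusion propagates, and I expect this structural reduction to be the delicate point. Granting the proposition, the obstruction is then applied to $I_{77}$ by computing $f$ explicitly on the order-$p$ coset $A$ via the Dehn-surgery description along the trefoil together with the Ni--Wu formula (\ref{eq:Ni-Wu}); since the sum of all correction terms of $I_{77}$ already vanishes, the payoff is to exhibit that $\max_{\mathfrak{s}\in A} d+\min_{\mathfrak{s}\in A} d\neq 0$, contradicting finiteness of the order.
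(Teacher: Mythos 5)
Your reduction to the $p$-primary part of the metabolizer, the surjectivity of the coordinate projections, and the coset-averaging identity $f(v)+\sum_{i\in J}f(c_iv)=0$ are all correct as far as they go, and your argument does dispose of the case where $\#^2Y$ bounds a rational ball. But the proof has a genuine gap exactly where you flag it: nothing in your setup controls the slave set $J$, and once $|J|\geq 2$ the identity no longer forces $M+\mu=0$. The only inequalities you can extract from it are $M\leq -|J|\mu$ and $-\mu\leq |J|M$, which for $|J|=2$ are consistent with, say, $M=2$ and $\mu=-1$. Surjectivity of the projections (Corollary 3 of \cite{Kim-Livingston:2014-1}) is too weak an input here: it only yields $\sum_{\mathfrak{s}\in A}d(Y,\mathfrak{s})=0$, which is the content of Proposition \ref{prop:d_sum} and which indeed holds for $I_{77}$, so the sharpening you need cannot come from that corollary alone.

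The missing ingredient, and the one the paper uses, is the stronger structural result \cite[Theorem 4]{Kim-Livingston:2014-1}: after passing to $\#^{4k}Y$ (not $\#^{2k}Y$), the metabolizer contains an element whose $p$-part has the form $(1,\dots,1,a_{2k+1},\dots,a_{4k})$, i.e.\ with \emph{half} of its coordinates equal to $1$. Multiplying this element by $mq$ annihilates the $q$-part and produces, for every $m$, the relation $2k\cdot d(Y,\mathfrak{s}_0+mq)+\sum_{i=2k+1}^{4k}d(Y,\mathfrak{s}_0+mqa_i)=0$, in which the multiplicity $2k$ of the controlled term equals the number of uncontrolled terms, all of which lie in $A$. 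Choosing $m$ to realize the maximum $M$ over $A$ gives $-2kM=\sum_i d(Y,\mathfrak{s}_0+mqa_i)\geq 2k\mu$, hence $M+\mu\leq 0$; choosing $m$ to realize the minimum gives $M+\mu\geq 0$. This balanced count is precisely what your coset argument cannot guarantee, so to complete your proof you would need either to import Theorem 4 of Kim--Livingston (at which point you recover the paper's argument) or to prove an equivalent ``half-diagonal'' statement about self-annihilating subspaces of $(\Z_{p})^{2k}$ with respect to $\lambda^{\oplus 2k}$ -- which is exactly the nontrivial content you have deferred.
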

\begin{proof}
	Let $Y$ be a rational homology sphere of finite order in $\rhcg$, satisfying the hypothesis in the proposition. Without loss of generality, we may assume that there is $k>0$ such that $\#^{4k}Y$ bounds a rational homology ball. Let $\mathcal{M}$ be a metabolizer of the linking form $\lambda$ of $\#^{4k}Y$ in $\oplus^{4k}H^2(Y;\Z)\cong(\Z_{p})^{4k}\oplus(\Z_{q})^{4k}$ of which the corresponding correction terms vanish. By the classification of linking forms over finite abelian groups \cite{Kawauchi-Kojima:1980-1}, the linking form $\lambda$ can be also decomposed into the direct sum of linking forms on each summand of $(\Z_{p})^{4k}\oplus(\Z_{q})^{4k}$.  Then, by a result of Kim and Livingston \cite[Theorem 4]{Kim-Livingston:2014-1}, $\mathcal{M}$ contains an element of the form \[\gamma=(1,\dots,1,a_{2k+1},\dots,a_{4k})\oplus \beta\in(\Z_{p})^{4k}\oplus(\Z_{q})^{4k}\] for some $a_i\in\Z_p$ and some $\beta\in(\Z_q)^{4k}$. For any $m\in\{0,\dots ,p-1\}$, 
	\begin{align*}
		0&=d(\#^{4k}Y, \#^{4k}\mathfrak{s}_0+mq\cdot(\gamma\oplus\beta))\\
		&=d(\#^{4k}Y, \#^{4k}\mathfrak{s}_0+mq\cdot\gamma\oplus 0)\\
		&=2k\cdot d(Y,\mathfrak{s}_0+mq)+\sum_{i=2k+1}^{4k}d(Y,\mathfrak{s}_0+mqa_i).\\
	\end{align*}
	Note that the first equation follows from Theorem \ref{thm:OS}, and in the third equation we use $d(Y,\mathfrak{s}_0)=0$ from Proposition \ref{prop:infinite}. Now, choose $m$ so that $d(Y,\mathfrak{s}_0+mq)$ is the maximum in $A$. Note that if $d(Y,\mathfrak{s}_0+mq)$ is less than $0$, this contradicts the equation above. Hence we may assume $d(Y,\mathfrak{s}_0+mq)$ is nonnegative. In order for the above equation to hold, the minimum value in $\{d(Y,\mathfrak{s}_0+nq)|n=0,\dots p\}$ must be the negative of the maximum value.
\end{proof}
\begin{corollary}
	The manifold of type $I_{77}$ has infinite order in $\rhcg$.
\end{corollary}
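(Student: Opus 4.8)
The plan is to deduce the corollary from Proposition \ref{prop:I_77} applied to $Y = I_{77}$, so the only work is to verify the hypotheses and to compute finitely many correction terms. Since $|H_1(I_{77};\Z)| = 77 = 7\cdot 11$ is odd, square-free, and a product of two primes, we have $H_1(I_{77};\Z)\cong\Z_{7}\oplus\Z_{11}$. Taking the prime to be $p = 11$ and the complementary factor to be $7$ (or, alternatively, $p=7$ with complementary factor $11$), the conditions that $p$ is prime, that $p$ is coprime to the other factor, and that their product $77$ is odd are all met, so Proposition \ref{prop:I_77} applies. It therefore suffices to show that $\max_{\mathfrak{s}\in A}d(I_{77},\mathfrak{s}) + \min_{\mathfrak{s}\in A}d(I_{77},\mathfrak{s})\neq 0$, where $A$ is the prescribed set of $p$ spin$^c$ structures obtained by shifting the spin structure $\mathfrak{s}_0$ by multiples of the complementary factor.

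To get at these $d$-invariants I would use the Dehn surgery description of Section \ref{sec:Donaldson_2}. Substituting $b=4$ into $I_{30(b-2)+17}\cong S^3_{(30b-43)/(5b-8)}(T)$ gives $I_{77}\cong S^3_{77/12}(T)$ with $T$ the trefoil. The Ni--Wu formula (\ref{eq:Ni-Wu}) then writes each $d(I_{77},i)$ as $d(L(77,12),i)$ minus a correction $2\max\{V_{\lfloor i/12\rfloor}(T),V_{\lfloor (88-i)/12\rfloor}(T)\}$, where $V_0(T)=1$ and $V_s(T)=0$ for $s>0$; the lens-space terms $d(L(77,12),i)$ are read off from the closed formula (\ref{eq:J-R-W}) (or the recursion (\ref{eq:d_lens_spaces})). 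Because $A$ consists of only eleven (respectively seven) spin$^c$ structures, all the required values can be tabulated explicitly, and one nonzero value of the sum of the extreme ones suffices.

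The main obstacle is purely one of labeling: Proposition \ref{prop:I_77} indexes spin$^c$ structures affinely, with $\mathfrak{s}_0$ the unique Chern-class-trivial (equivalently spin) structure and $A$ the coset of the appropriate cyclic summand of $H^2(I_{77};\Z)$, whereas the Ni--Wu formula uses the surgery enumeration by $\Z/77\Z$. I would reconcile the two by identifying $\mathfrak{s}_0$ as the unique conjugation-fixed element of $\Z/77\Z$, in the spirit of the discussion following Proposition \ref{prop:Dehn_surgery}, and then stepping through $A$ by repeatedly adding the complementary factor modulo $77$. Once this identification is pinned down, evaluating the maximum and minimum of $d$ over $A$ is a finite computation; since a nonzero value of their sum is incompatible with $I_{77}$ having finite order by Proposition \ref{prop:I_77}, this completes the proof that $I_{77}$ has infinite order in $\rhcg$.
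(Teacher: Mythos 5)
Your proposal is correct and follows essentially the same route as the paper: apply Proposition \ref{prop:I_77} to $I_{77}$ with $\{p,q\}=\{7,11\}$, use the surgery description $I_{77}\cong S^3_{77/12}(T)$ together with the Ni--Wu formula to tabulate the correction terms over the set $A$, and check that the maximum plus the minimum is nonzero. The paper carries out the computation with $p=7$, $q=11$, obtaining the values $\{0,-\tfrac{2}{7},\tfrac{6}{7},-\tfrac{4}{7},-\tfrac{4}{7},\tfrac{6}{7},-\tfrac{2}{7}\}$ and the nonzero sum $\tfrac{6}{7}-\tfrac{4}{7}=\tfrac{2}{7}$, confirming the step you deferred.
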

\begin{proof}
	By taking $p=7$ and $q=11$, the manifold $I_{77}$ satisfies the hypothesis of the above proposition. The correction terms of $\{\mathfrak{s}_0+11n|n=0,\dots,6\}$ are given as:
	\begin{equation*}
		\left\{ 0, -\frac{2}{7}, \frac{6}{7}, -\frac{4}{7}, -\frac{4}{7}, \frac{6}{7}, -\frac{2}{7}\right\}	
	\end{equation*}
	by the computation using that $I_{77}\cong S_{\frac{77}{12}}(T_{2,3})$, and Ni and Wu's formula (\ref{eq:Ni-Wu}). Since $\frac{6}{7}-\frac{4}{7}\neq0$, $I_{77}$ has infinite order in $\rhcg$ by Proposition \ref{prop:I_77}.
\end{proof}

\begin{remark}
	We remark that the order of $I_{77}$ is also obtained by a recent result of Aceto and Golla \cite[Theorem 1.1]{Golla-Larson:2018-1}
\end{remark}

\subsection{Concordance of Montesinos knots admitting spherical branched cover} 
We now prove by-products of our results, Corollary \ref{cor:slice-ribbon} and \ref{cor:concordance_order}, about the concordance of the family of Montesinos knots admitting spherical branched double covers. Let $\mathcal{S}$ be the set of such Montesinos knots. The corollaries can be considered as generalizations of the results of Lisca, \cite[Corollary 1.3]{Lisca:2007-1} and \cite[Corollary 1.3]{Lisca:2007-2}, for 2-bridge knots (whose branched covers are lens spaces).   

\begin{proof}[Proof of Corollary \ref{cor:slice-ribbon}]
	By the classifications of Seifert manifolds \cite{Seifert:1933} and Montesinos links \cite{Zieschang:1984-1}, the Montesinos links which admit spherical manifolds as the branched cover are exactly those corresponding to the natural Montesinos branch sets of spherical manifolds. Then, by the following clear implication, 
	\[
	K \text{ is ribbon} \Rightarrow K \text{ is slice} \Rightarrow \Sigma(K) \text{ bounds a rational ball},
	\]
	the corollary is proved if we show that all Montesinos knots $K$ in $\mathcal{S}$ such that $\Sigma(K)$ bounds a rational ball are ribbon. This follows from Proposition \ref{prop:Lecuona_ribbon} for the branch sets of $\mathbf{D}$-type manifolds of order $1$, $T_{3}$, and $T_{27}$, and from Proposition \ref{prop:ribbon} for the branch set of $I_{49}$.
\end{proof}

\begin{proof}[Proof Corollary \ref{cor:concordance_order}]
	Recall that the concordance order of a knot $K$ is bounded below by the cobordism order of $\Sigma(K)$. In the proof of Corollary \ref{cor:slice-ribbon} above, we show that for any knot $K$ in $\mathcal{S}$, $\Sigma(K)$ bounds a rational homology ball if and only if $K$ is slice (of concordance order 1). Thus we only need to show that the branch set of order $2$ spherical manifolds have the same order in the concordance group.
	
	For the lens spaces, this is a result of Lisca \cite[Corollary 1.3]{Lisca:2007-2}. Since the order of $H_1$ of $\mathbf{D}$-type manifolds are even, they cannot be obtained by double cover along a knot. Hence it remains to show that the Montesinos knot $M(4;(2,1),(3,2),(3,1))$, the canonical branch set of $T_{15}$, has concordance order two. In fact this Montesinos knot is isotopic to the knot $9_{24}$ in terms of the notation of the Rolfsen's table \cite{Rolfsen:1976-1}. See \cite[Appendix F.2]{Kawauchi:1996-1} for the fact that $9_{24}=M(4;(2,1),(3,2),(3,1))$. According to KnotInfo of Cha and Livingston \cite{Cha-Livingston:KnotInfo}, the knot $9_{24}$ has concordance order two.
\end{proof}

As a final remark, we note that our work did not employ any particular facts of spherical geometry, but make use of properties of Seifert manifolds and knot surgery manifolds. Thus it is natural to expect generalizations of our results to larger families of manifolds, and we address the following question. 
\begin{question}
	Which Dehn surgeries on the trefoil knot, or more generally 3-legged Seifert manifolds bound rational homology balls?
\end{question}
\begin{remark}
	We remark that Aceto and Golla classified $S^3_{p/q}(T)$ bounding rational balls provided that $p\equiv1$ modulo $q$ \cite[Corollary 4.14]{Aceto-Golla:2017-1}.
\end{remark}

\subsection*{Acknowledgments} 
The authors thank Hakho Choi, Min Hoon Kim, Jongil Park, Motoo Tange and Ki-Heon Yun for their interests in this project and valuable conversations. The second author is partially supported by Basic Science Research Program through the National Research Foundation of Korea (NRF, F2018R1C1B6008364) and BK21 PLUS SNU Mathematical Sciences Division.

\bibliographystyle{alpha}
\bibliography{references}{}
\end{document}